\documentclass[english,leqno,10pt,a4paper]{memo-l}
\usepackage{amsmath,amstext, amsthm, amssymb}

\includeonly{introduction,preliminaries,grothendieck,katz,kolchin,malgrange,appendix,compairison,tannakianCat}

\usepackage{pifont}
\usepackage{hyperref}
\usepackage{srcltx}

\usepackage[english]{babel}
\usepackage[all]{xy}

\usepackage{color}

\usepackage{color}
\usepackage[color=orange,textwidth=2.3cm]{todonotes}
\setlength{\marginparwidth}{2.5cm}

\usepackage{marginnote}
\definecolor{luciacolor}{rgb}{0.01,0.28,1.00}


\newtheorem{thmintro}{Theorem}

\newtheorem{thm}{Theorem}[chapter]
\newtheorem{cor}[thm]{Corollary}
\newtheorem{lemma}[thm]{Lemma}
\newtheorem{prop}[thm]{Proposition}

\theoremstyle{remark}
\newtheorem{rmk}[thm]{Remark}

\theoremstyle{definition}
\newtheorem{defn}[thm]{Definition}

\newtheorem{exa}[thm]{Example}
\newtheorem{notation}[thm]{Notation}

\newtheorem{warning}[thm]{Warning}

\numberwithin{section}{chapter}
\numberwithin{equation}{chapter}

\def\beq{\begin{equation}}
\def\eeq{\end{equation}}

\def\$$endproof{\eqno{\qedhere}$$\end{proof}}


\def\N{{\mathbb N}}
\def\Z{{\mathbb Z}}
\def\Q{{\mathbb Q}}
\def\R{{\mathbb R}}
\def\C{{\mathbb C}}

\def\I{{\mathbb I}}

\def\P{{\mathbb P}}
\def\F{{\mathbb F}}

\def\J{{\mathbb J}}

\def\cA{{\mathcal A}}
\def\cB{{\mathcal B}}
\def\cC{{\mathcal C}}

\def\cE{{\mathcal E}}
\def\cF{{\mathcal F}}
\def\cM{{\mathcal M}}
\def\cN{{\mathcal N}}
\def\cG{{\mathcal G}}
\def\cH{{\mathcal H}}
\def\cI{{\mathcal I}}
\def\cJ{{\mathcal J}}
\def\cL{{\mathcal L}}
\def\cO{{\mathcal O}}
\def\cP{{\mathcal P}}

\def\cR{{\mathcal R}}

\def\cT{{\mathcal T}}

\def\cW{{\mathcal W}}

\def\a{\alpha}
\def\be{\beta}
\def\de{\delta}
\def\ga{\gamma}
\def\sg{\sigma}

\def\Ga{\Gamma}
\def\la{\lambda}
\def\La{\Lambda}

\def\Sg{\Sigma}
\def\De{\Delta}

\def\l{\left}
\def\r{\right}
\def\[[{\l[\l[}
\def\]]{\r]\r]}
\def\p{\prime}

\def\sgq{\sigma_q}

\def\Sgq{\Sigma_q}
\def\dq{d_q}

\def\ord{{\rm ord}}

\def\cf{\emph{cf. }}
\def\ie{i.e.}
\def\ds{\displaystyle}

\def\wtilde{\widetilde}
\def\ul{\underline}
\def\ol{\overline}
\def\Dq{\Delta_q}
\def\GL{\mathop{\rm GL}}
\def\bfGL{\mathop{\mathbf{G}\mathbf{L}}}
\def \Alg{\mathop{\rm Alg}}
\def\Gal#1{\cG al^{alg} (#1)}
\def\Galt#1{\wtilde{\cG al^{alg} (#1)}}
\def\Kol#1{\mathcal{K} ol(#1)}
\def\Kolt#1{\wtilde{\mathcal{K}ol(#1)}}
\def\Zar#1{\mathcal{Z} ar(#1)}
\def\Zart#1{\wtilde{\mathcal{Z}ar(#1)}}
\def\Galan#1{\mathcal{G}al(#1)}

\makeindex

\begin{document}

\bibliographystyle{amsalpha}

\frontmatter

\title{Intrinsic approach\\
to Galois theory of \texorpdfstring{$q$}{q}-difference equations
\footnotetext{Work partially supported by ANR-06-JCJC-0028 and ECOS-Nord C12M01.}}

\author{Lucia Di Vizio}

\address{Lucia DI VIZIO,
Laboratoire de Math\'ematiques UMR 8100, CNRS,
Universit\'e de Versailles-St Quentin,
45 avenue des \'Etats-Unis
78035 Versailles cedex, France.}

\email{\tt divizio@math.cnrs.fr}

\author{Charlotte Hardouin}

\address{Charlotte HARDOUIN, Institut de Math\'{e}matiques de Toulouse,
118 route de Narbonne,
31062 Toulouse Cedex 9, France.}

\email{hardouin@math.univ-toulouse.fr}

\author{~\\with the preface to Part 4\\
The Galois $D$-groupoid of a $q$-difference system\\
by Anne Granier}

\address{Anne GRANIER, Institut de Math\'{e}matiques de Toulouse,
118 route de Narbonne,
31062 Toulouse Cedex 9, France.}

\subjclass[2000]{39A13, 12H10}

\keywords{Generic Galois group; intrinsic Galois group; $q$-difference equations; differential Tannakian categories;
Kolchin differential groups; Grothendieck conjecture on $p$-curvatures; $D$-groupoid}

\date{\today}


\begin{abstract}
\end{abstract}

\maketitle

\chapter*{Acknowledgements.}
We would like to thank D. Bertrand, Z. Djadli, C. Favre, M. Florence, A. Granier, D. Harari, F. Heiderich, A. Ovchinnikov, B. Malgrange, J-P. Ramis, J. Sauloy, M. Singer, J. Tapia, H. Umemura and M. Vaquie for the many discussions on different points of this paper, and the
organizers of the seminars of the universities of Grenoble I, Montpellier, Rennes II, Caen, Toulouse and Bordeaux
that invited us to present the results below at various stages of our work.
\par
We would like to thank the ANR projects Diophante and $q$DIFF (Contract No ANR-10-JCJC 0105)
and grant ECOS Nord France-Colombia No C12M01 y Colciencias
``\'Equations aux q-diff\'erences \& groupes quantiques'',
that {have} made possible a few reciprocal visits,
and the Centre International de Rencontres
Math\'{e}matiques in Luminy for supporting us \emph{via} the Research in pairs program and
for providing a nice working atmosphere.

\tableofcontents


\chapter*{Introduction}

The Galois theory of difference equations has witnessed a major
evolution in the last two decades.
In the particular case of $q$-difference equations, authors have introduced several different Galois theories.
In this memoir we consider an arithmetic approach to the Galois theory of $q$-difference equations
and we use it to establish an arithmetical description of some of the Galois groups attached to $q$-difference systems.
\par
Let $q$ be a non-zero element of the field $\C$ of complex numbers.
A (linear) $q$-difference system
is a functional equation of the form
\beq\label{eq1}
Y(qx)=A(x)Y(x),
\hbox{~with $A(x)\in\GL_\nu(\C(x))$.}
\eeq
The \emph{leitmotif} of the paper  is the Galoisian properties of the so-called dynamics of
the system \eqref{eq1}, namely the set of maps obtained by iteration of the maps $(x, X)\longmapsto (qx,A(x)X)$
defined over $U\times \C^\nu$, where $U$ is an open subset of $\P^1_\C$.

\par
Theorem \ref{thm:complexmodules} below proves that the algebraic nature of the solutions of the $q$-difference
system  \eqref{eq1} is entirely determined by the specialization of  certain  subsequences of the dynamics
$\left(A(q^{n-1} x)\hdots A(x)\right)_{n \in  \N}$, that are called the curvatures of the $q$-difference system.
This Theorem extends the main result
of \cite{DVInv}, in which the assumption that $K$ is a number field,
and hence that $q$ is algebraic, is crucial.
Here we only assume $K$ to be a finitely generated $\Q$-algebra
and $q$ can be any number, algebraic or transcendental.
We state here Theorem \ref{thm:complexmodules}
in the particular case $K=\Q(q)$ and under the assumption that
$q$ is a transcendental number:

\begin{thmintro}
Let $A(x) \in\GL_\nu(\Q(q,x))$. The $q$-difference system $Y(qx)=A(x)Y(x)$
admits a full set of solutions in $\Q(q,x)$ if and only if  for almost all
$n \in \N$ there exists an $n$-th primitive root of unity $\zeta_n$
such that $A(q^{n-1} x)\hdots A(x)$ specializes to the identity matrix at
$q =\zeta_n$.
\end{thmintro}
One could ask whether an analogous statement holds for more general difference equations, that is for instance for dynamics
induced by the action an algebraic group. It appears that our statement  fails to be true  already
when one replaces the multiplicative group by the additive group which corresponds to the case of finite difference equations,
i.e., for equations associated to the operator $x\mapsto x+1$.
In \cite[page 58, \S5.4]{vdPutSingerDifference}, the authors provide a counterexample.

\par
Relying on the  above rationality criteria,
one is able to provide an arithmetic set of generators
for some of the Galois groups attached to $q$-difference systems. More precisely, in \cite{HardouinSinger}, the authors  attach to  a $q$-difference system a linear differential algebraic group \textit{ \`{a} la Kolchin}, that is  a group of
matrices defined as the set of zeros of a finite number of algebraic differential equations. The defining equations of this parametrized Galois group  encode the differential algebraic relations satisfied by the solutions of  the $q$-difference system. The work of Hardouin and Singer generalizes the
\emph{classical} Picard-Vessiot theory of linear difference systems as developed in \cite{vdPutSingerDifference} but requires that the field of $\sgq$-constants is differentially closed (see \ref{subsec:diffalgeo}).
{We attach to a $q$-difference system $Y(qx)=A(x)Y(x)$, with $A(x) \in \GL_n(K(x))$,}
a differential algebraic group scheme, that we call parametrized
intrinsic Galois group.
Roughly, this differential algebraic group scheme is linked to the differential algebraic
relations satisfied by the entries of $A(x)$, in the sense that it only relies on constructions of differential algebra of the associated $q$-difference module, and therefore on the
associated matrix constructions of $A(x)$ and its dynamics.
The advantages of considering this group are its intrinsic nature and its
arithmetic description (see Chapter \ref{chap:kolchin}),
which is an analogue of the conjectural description obtained by Katz in {\cite{Katzbull}}
for the Lie algebra of the intrinsic Galois group of a linear differential system. One can show that above a
suitable differential field extension of $\C(x)$ the parametrized Galois group and its intrinsic version become isomorphic.
This allows to give an arithmetical description of the parametrized Galois group, that might be suitable to build
computation algorithms. Indeed, unlike  the case of linear differential systems,
the  computation of the curvatures of a $q$-difference system relies
only on matrix multiplication.  Thus, one  may hope to develop
fast algorithms to compute the curvatures and perhaps also the parametrized intrinsic
Galois group in terms of differential polynomial  equations annihilated  by the curvatures.
See \cite{bostanschost} in the differential case.\footnote{{One should also cite \cite{Bostan-Caruso-Schost-2014,Bostan-Caruso-Schost-2015,Bostan-Caruso-Schost-2016}, which have appeared since the submission of this memoir.
We should also point out that
an algorithm to
calculate the Lie algebra of the Galois group of a linear differential equation has been published in \cite{barkatou2016computing}.
It can be accelerated using the easy known implication of the Grothendieck-Katz conjecture on $p$-curvatures.
In contrast with previous algorithms, for whom there was no hope of actual implementation, this one is implemented in {\sc MAPLE}.}
An analogous algorithm for $q$-difference equations is a work in progress.}
Notice that the arithmetic description of the parametrized intrinsic Galois group provides
an arithmetic answer to the problem of the rationality
of  solutions of  $q$-difference systems as well as
the control of their differential dependencies with respect to parameters
(see  for instance \cite{abramov} for some algorithms that tackle these questions).

Finally, the description of the parametrized intrinsic Galois group in terms
of curvatures allows us to understand the link between the linear and non-linear
Galois theory of $q$-difference systems. In \cite{GranierFourier}, A. Granier introduces a
Galois $D$-groupoid for non-linear $q$-difference equations,
in the spirit of Malgrange's work.
In Corollary \ref{cor:Malgrangoide}, we show, using once more the
curvature characterization of the parametrized intrinsic Galois group,
that the Malgrange-Granier $D$-groupoid generalizes the parametrized intrinsic Galois group
to the non-linear case. Thanks to our comparison results, we are able to
compare the Malgrange-Granier $D$-groupoid to the
parametrized Galois group of Hardouin-Singer.
This answers a question of Malgrange  (\cite[page 2]{Malgrangepseudogroupes})
on the relation among $D$-groupoids and Kolchin's differential algebraic groups.

\begin{center}
\bigskip
{\large\bfseries Description of the main results}
\bigskip
\end{center}

The paper being relatively long, we give here a quite detailed description of the content.
Part \ref{part:preliminaries} is an introduction to $q$-difference equations and
explains some preliminaries results.


\section*[Introduction]{Grothendieck conjecture for \texorpdfstring{$q$}{q}-difference equations}

In \cite{DVInv}, the first author proved a $q$-difference analogue
of the Grothendieck conjecture on $p$-curvatures, under the assumption that
$q$ is an algebraic number and that the field of constants is a number field.
In this paper, we generalize this result in two different directions.
\par
Consider a field of rational functions $K(x)$, a transcendental element $q\in K$, such that
$K$ is itself a field of rational functions in $q$ of the form $k(q)$,
and a $q$-difference system
$Y(qx)=A(x)Y(x)$, with $A(x)\in\GL(K(x))$. We prove the following result
(see Theorem \ref{thm:GrothKaz} for a more general and intrinsic result):

\begin{thmintro}\label{thmintro:grothendiecktranscendetal}
A $q$-difference system $Y(qx)=A(x)Y(x)$, with $A(x)\in\GL_\nu(K(x))$, has a
solution matrix in $\GL_\nu(K(x))$ if and only if for almost all positive integer
$n$ there exists a primitive $n$-th root of unity $\zeta_n$ such that
$$
\l[A(q^{n-1}x)\cdots A(qx)A(x)\r]_{q=\zeta_n}=
1,
$$
{where} $1$   stands for the  identity matrix of size $\nu$.
\end{thmintro}

In the present article we work under more general assumptions.
Namely, we assume that $k$ is a perfect field, of any characteristic, and that
$K$ is a finite extension of $k(q)$. Replacing $k$ by its perfect closure,
the theorem above covers all the possible cases
in which $q$ is transcendental over the prime field.
\par
Suppose now that $q$ is algebraic over the prime field, and that the characteristic of $K$ is zero.
We consider again the $q$-difference system
$Y(qx)=A(x)Y(x)$, with $A(x)\in\GL(K(x))$. We can always suppose that
$K$ is actually finitely generated over $\Q$. For the sake of simplicity,
we assume in this introduction that $K=\Q(\a)$ is a purely transcendental
extension and that $q\in\Q$, $q\neq 0,1,-1$.
For almost all rational primes $p$ the image of
$q$ in ${\F}_p$ is well defined and non-zero,
so that there exists a minimal positive number $\kappa_p$ such that $q^{\kappa_p}\equiv 1$ modulo $p$.
Let $\ell_p$ be a positive integer such that
$1-q^{\kappa_p}=p^{\ell_p}\frac{h}{g}$, with $h,g\in\Z$ prime to
$p$. We have (see Theorem \ref{thm:GrothKatzalg}):

\begin{thmintro}\label{thmintro:grothendieckalgebraic}
A $q$-difference system $Y(qx)=A(x)Y(x)$, with $A(x)\in\GL_\nu(K(x))$, has a
solution matrix in $\GL_\nu(K(x))$ if and only if for almost all prime $p$
we have
$$
A(q^{\kappa_p-1}x)\cdots A(qx)A(x)\equiv 1
\hbox{  modulo $p^{\ell_p}$.}
$$
\end{thmintro}

The statement above is a little bit imprecise, since we should have introduced a $\Z$-algebra contained in
$K(x)$ that would have given a precise sense to the reduction modulo $p^{\ell_p}$, for almost all $p$.
The reader will find a more formal statement in Part \ref{part:Triviality}, where
the result above is proved under the assumption that
$K$ is any finitely generated extension of $\Q$ and that $q$ is an algebraic number,
not a root of unity.
As already pointed out,
the first author proves in \cite[Thm.7.1.1]{DVInv} the statement above under the assumption that $K$ is a number field.
Our proof relies on \cite[Thm.7.1.1]{DVInv}, in the sense that we consider a
transcendence basis of $K$ over $\Q$ as a set of parameters varying in the algebraic closure of $\Q$
and therefore we make a non-trivial reduction to the situation considered in \cite{DVInv}, for sufficiently many
special values of the parameters.
\par
Notice that if we starts with a $q$-difference system over $\C(x)$ and a
complex number $q$, which is not a root of unity,
{then we can always assume, without loss of generality, that we are in one
of the two situations above.}
\par
{The rest of the paper relies on Theorem \ref{thmintro:grothendiecktranscendetal} and
Theorem \ref{thmintro:grothendieckalgebraic} above, in the sense that we first give a geometric equivalent statement of such theorems and then use it to
establish a link with the non-linear theory.}

\section*[Introduction]{Intrinsic Galois groups}
{Let} $K$ be a field of characteristic zero
and $q$ a non-zero element of $K$, which is not a root of unity.
We  denote by $\sgq$ the $q$-difference operator $f(x)\mapsto f(qx)$.
A $q$-difference module $\cM_{K(x)}=(M_{K(x)},\Sgq)$ over $K(x)$ is a $K(x)$-vector space of finite dimension $\nu$
equipped with a $\sgq$-semilinear bijective operator $\Sgq$:
$$
\Sgq(fm)=\sgq(f)\Sgq(m),\,
\hbox{for any $m\in M_{K(x)}$ and $f\in K(x)$.}
$$
{In a given basis of $M_{K(x)}$,}
the vector of  coordinates of an element fixed by $\Sgq$ is solution {column} of a linear $q$-difference system of the form
$$
Y(qx)=A(x)Y(x),~
\hbox{with $A(x)\in \GL_\nu(K(x))$.}
\leqno{(\mathcal S_q)}
$$
We consider the collection $Constr(\cM_{K(x)})$
of  constructions of linear algebra   of $\cM_{K(x)}$
({direct sums, tensor products, symmetric and antisymmetric products, duals}).
The operator $\Sgq$ induces a $q$-difference operator on every
element of $Constr(\cM_{K(x)})$, that we  still call $\Sgq$.
Then the intrinsic Galois group of $\cM_{K(x)}$ is defined as:
$$
\begin{array}{l}
Gal(\cM_{K(x)})=
\{\varphi\in \GL(M_{K(x)}):
\hbox{$\varphi$ stabilizes every $\Sgq$-stable subset}\\~\\
\hskip 40 pt\hbox{in any construction of linear algebra of~}  M_{K(x)}\}.
\end{array}
$$

{This  definition  has to be understood in a functorial way, which allows to endow the intrinsic Galois group
with a structure of group scheme over $K(x)$. 
Of course, this is linked to a Tannakian definition of $Gal(\cM_{K(x)})$.}
As in \cite{Katzbull}, Theorem \ref{thmintro:grothendiecktranscendetal} and
Theorem \ref{thmintro:grothendieckalgebraic}
can be reformulated as an arithmetic  description of the intrinsic Galois group:

\begin{thmintro}\label{thmintro:genGalois}
In the notation of Theorem \ref{thmintro:grothendiecktranscendetal}
(resp. Theorem \ref{thmintro:grothendieckalgebraic}),
the intrinsic Galois group $Gal(\cM_{K(x)})$ is the smallest algebraic subgroup
of $\GL(M_{K(x)})$, whose specialization at $\zeta_n$ contains the specialization of
the operator $\Sgq^n$ at $\zeta_n$, for almost all positive integer $n$ and for a choice of a primitive
$n$-th root of unity $\zeta_n$
(resp. whose reduction modulo $p^{\ell_p}$ contains the reduction of
the operator $\Sgq^{\kappa_p}$  modulo $p^{\ell_p}$, for almost all prime $p$).
\end{thmintro}

This statement is a little bit informal. The reader will find a precise statement
in Chapter \ref{chap:genericgaloisgroup}.
\par
As the notion of intrinsic Galois group is deeply related to the notion of Tannakian category,
the notion of parametrized intrinsic Galois group is related to the notion of differential Tannakian category
developed {in \cite{DifftanOv} and \cite{kamtan}}.
We show in this paper how the category of $q$-difference modules over $K(x)$
may be endowed with a prolongation functor $F$ and thus turns out to be a differential Tannakian category.
Intuitively,
if $\cM$ is a $q$-difference module, associated with a $q$-difference
system $\sgq(Y)=AY$, the $q$-difference module
$F(\cM)$ is attached to the $q$-difference system
$$
\sgq(Z)=
\left(\begin{array}{cc} A & \partial A \\
0 & A \end{array} \right)Z.
$$
Notice that if $Y$ verifies $\sgq(Y)=AY$, then
$Z=\begin{pmatrix}Y &\partial(Y)\\0&Y\end{pmatrix}$ is solution of the system above.
We consider
the family $Constr^{\partial}(\cM_K(x))$ of constructions of differential algebra of $\cM_{K(x)}$,
that is the smallest family containing $\cM_{K(x)}$
and closed with respect to all  constructions of linear algebra (direct sums, tensor products,
symmetric and antisymmetric products, duals) plus the prolongation
functor $F$. Then the parametrized intrinsic Galois group of $\cM_{K(x)}$ is defined as:
$$
\begin{array}{l}
Gal^{\partial}(\cM_{K(x)})=\{\varphi\in \GL(M_{K(x)}): \hbox{~$\varphi$ stabilizes
every $\Sgq$-stable subset}\\ \\
\hbox{in any construction of differential algebra of } M_{K(x)}\}.
\end{array}
$$
The group $Gal^{\partial}(\cM_{K(x)})$
is endowed with a structure of linear differential algebraic group (\cf \cite{diffalgkolch}).
Theorem \ref{thmintro:grothendiecktranscendetal} and
Theorem \ref{thmintro:grothendieckalgebraic}
can be reformulated as an arithmetic  description of  the parametrized intrinsic Galois group:

\begin{thmintro}\label{thmintro:gendiffGalois}
In the notation of Theorem \ref{thmintro:grothendiecktranscendetal}
(resp. Theorem \ref{thmintro:grothendieckalgebraic}),
the parametrized intrinsic Galois group $Gal^\partial(\cM_{K(x)})$ is the smallest
differential algebraic subgroup
of $\GL(M_{K(x)})$, whose specialization at $\zeta_n$ contains the specialization of
the operator $\Sgq^n$ at $\zeta_n$, for almost all positive integer $n$ and for a choice of a primitive
$n$-th root of unity $\zeta_n$
(resp. whose reduction modulo $p^{\ell_p}$ contains the reduction of
the operator $\Sgq^{\kappa_p}$  modulo $p^{\ell_p}$, for almost all prime $p$).
\end{thmintro}

\section*[Introduction]{Comparison with Malgrange-Granier Galois theory for non-linear differential equations}

A. Granier has defined a Galois $D$-groupoid for nonlinear $q$-difference equations, in the wake of Malgrange's and Casale's work.
In the particular case of a linear system $Y(qx)=A(x) Y(x)$, with $A(x)\in
\GL_\nu(\C(x))$, the Malgrange-Granier $D$-groupoid is the $D$-envelop
of the dynamics, i.e.,  it encodes all the partial differential equations over $\P^1_\C \times \C^\nu$
with analytic coefficients, satisfied by local diffeomorphisms of the
form $(x,X) \mapsto (q^k x, A_k(x)X)$ for all $k \in \Z$, where
$A_k(x)\in \GL_\nu(\C(x))$ is the matrix obtained by iterating the system
$Y(qx)=A(x)Y(x)$ so that:
$$
Y(q^kx)=A_k(x)Y(x).
$$
Notice that:
$$
\begin{array}{l}
A_k(x):=A(q^{k-1}x)\dots A(qx)A(x)~\hbox{for all~}k\in\Z,\,k>0;\\
A_0(x)=Id_\nu;\\
A_k(x):=A(q^{k}x)^{-1}A(q^{k+1}x)^{-1}\dots A(q^{-1}x)^{-1}~\hbox{for all~}k\in\Z,\,k<0.
\end{array}
$$
Using Theorem
10, we relate this analytic $D$-groupoid with the more algebraic notion
of parametrized intrinsic Galois group.
We prove that the solutions in a
neighborhood of $\{x_0\}\times \C^\nu$ of the sub-$D$-groupoid
of the Malgrange-Garnier $D$-groupoid, which
fixes the transversals, are precisely the
points of the parametrized intrinsic Galois group{, that are rational over the ring $\C\{x-x_0\}$}
of germs of analytic functions at $x_0$.
\par
For systems with constant coefficients, we retrieve the result of A.
Granier (\cf \cite[Thm. 2.4]{GranierFourier}), i.e.,  the evaluation in $x=x_0$ of the
solutions of the transversal $D$-groupoid is the usual Galois group. Notice that
in this case intrinsic and parametrized intrinsic  Galois groups coincide. The
analogous result for differential equations is proved in \cite{MalgGGF}.
B. Malgrange, in the differential case, and A. Granier,
in the $q$-difference constant case, establish a link between the Galois $D$-groupoid and the
usual Galois group. This is compatible with our results since in those cases
the intrinsic and parametrized intrinsic Galois groups, as well as the usual Galois groups,
coincide (\cf \S\ref{sec:MalgrangeGranier} below).

\mainmatter

\part{Introduction to \texorpdfstring{$q$}{q}-difference equations}
\label{part:preliminaries}

\chapter{Generalities on \texorpdfstring{$q$}{q}-difference modules}
\label{chap:modules}

We quickly recall some notations and a few basic results
about $q$-difference algebra and $q$-difference modules.
{For a general introduction to difference algebra, see \cite{Cohn:difference} and \cite{Levin}.}
For a more detailed introduction to $q$-difference modules see \cite[Chapter 12]{vdPutSingerDifference},
\cite[Part I]{DVInv} or \cite{gazette}.
%

\section{Basic definitions}

Let $K$ be a field and $q\neq 0,1$ be a fixed element of $K$.
The field $K(x)$ is naturally a {$q$-difference field}, \ie, it is equipped with the
\index{$q$-difference}
\index{$q$-difference!field}
{$q$-difference operator}
\index{$q$-difference!operator}
$$
\begin{array}{rccc}
\sgq:&K(x)&\longrightarrow&K(x)\\
&f(x)&\longmapsto&f(qx)
\end{array}.
$$
We can associate to $\sgq$ a non-commutative derivation, that we will call {$q$-derivation},
\index{$q$-derivation}
defined by
$$
\dq(f)(x)=\frac{f(qx)-f(x)}{(q-1)x},
$$
and satisfying a {$q$-Leibniz formula}:
\index{$q$-Leibniz formula}
$$
\dq(fg)(x)=f(qx)\dq(g)(x)+\dq(f)(x)g(x),
\hbox{~for any $f,g\in K(x)$.}
$$
Notice that, if we set $[n]_q=\frac{q^n-1}{q-1}$,
$[n]_q^!=[n]_q[n-1]_q\cdots[1]_q$, for any $n\geq 1$, $[0]_q^!=1$, then
\index{n@$[n]_q$}
\index{n@$[n]_q$!$[n]_q^"!$}
$$
\dq^s x^n=\frac{[n]_q^!}{[n-s]_q^!}x^{n-s},
\hbox{~for any pair of positive integers $s,n$,
such that $n\geq s$.}
$$
Therefore we define the {$q$-binomial} \index{$q$-binomial}
$\binom{n}{s}_q=\frac{[n]_q^!}{[n-s]_q^![s]_q^!}$, so that
$\frac{\dq^s}{[s]_q^!}x^n=\binom{n}{s}_q x^{n-s}$. When $q$ is a root of unity of order $\kappa$, the operator
$\dq^\kappa$ and all its iterations are equal to $0$.
Nonetheless, the $q$-binomials $\binom{n}{s\kappa}_q$ and the operators
$\frac{\dq^{s\kappa}}{[s\kappa]_q^!}$ are well defined and non-zero for every positive integer
$s$.

\medskip
More generally, we will consider a {$q$-difference field extension}
\index{$q$-difference field extension}
$\cF$ of $K(x)$, \ie,
a field extension $\cF$ of $K(x)$ equipped with a field automorphism extending
the action of $\sigma_q$, which we will also call $q$-difference operator and denote by $\sgq$.
Of course, $\cF$ is also equipped with the skew derivation $\dq:=\frac{\sgq-1}{(q-1)x}$.
We denote by $\cF^{\sgq}$ the field of constant of $\cF$,
\ie, the subfield of $\cF$ of all elements fixed by $\sgq$.
\par
Typical examples of $q$-difference field extensions of $K(x)$ are the fields $K((x))$ or $K(x^{1/r})$, for $r\in\Z_{>1}$.
In the latter case, one sets $\sgq(x^{1/r})=\tilde{q}x^{1/r}$,
for a given $r$-th root $\tilde{q}$ of $q$.
If $K=\C$, one can naturally consider also the fields of meromorphic functions over $\C$,
over $\C^*=\C\smallsetminus\{0\}$ or over any domain invariant under the action of $\sgq$.

\begin{defn}
A {$q$-difference module $\cM_\cF=(M_\cF,\Sgq)$ (of rank $\nu$) over $\cF$}
\index{$q$-difference!module}
is a finite dimensional $\cF$-vector space $M_{\cF}$ (of dimension $\nu$)
equipped with an invertible $\sgq$-semilinear operator $\Sgq:M_\cF\to M_\cF$, \ie,
a bijective additive map from $M_\cF$ to itself such that
$$
\Sgq(fm)=\sgq(f)\Sgq(m),
\hbox{~for any $f\in \cF$ and $m\in M_{\cF}$.}
$$
We will call $\Sgq$ a $q$-difference operator over $M_\cF$ or the $q$-difference operator of $\cM_\cF$.
A $q$-difference submodule $\cN_\cF$ of $\cM_\cF$ is an  $\cF$-vector subspace
of $\cM_\cF$ that is setwise invariant with respect to $\Sgq$. Then, $\cN_\cF= (N_\cF,\Sgq{|_{N_{\cF}}})$ is
a $q$-difference module.

\par
A {morphism of $q$-difference modules (over $\cF$)} is a morphism
\index{$q$-difference!module!morphism of --}
of $\cF$-vector spaces, commuting with the $q$-difference operators.
We denote by $Diff(\cF, \sgq)$ the category of $q$-difference modules over $\cF$.
\end{defn}

\subsection{Constructions of linear algebra}
\label{subsec:Algebraic constructions}
Let $\cM_\cF=(M_\cF,\Sg_{q,M})$ and $\cN_\cF=(N_\cF,\Sg_{q,N})$ be two $q$-difference modules over $\cF$.
The {direct sum $\cM_\cF\oplus\cN_\cF$ of $\cM_\cF$ and $\cN_\cF$}
\index{$q$-difference!module!tensor product of --}
is the $q$-difference
module such that:
\begin{itemize}
\item
the underlying $\cF$-vector space is $M_\cF\oplus N_\cF$;
\item
the $q$-difference operator is a $\sgq$-semilinear bijection
defined by $m\oplus n\mapsto \Sg_{q,M}(m)\oplus\Sg_{q,N}(n)$.
\end{itemize}
The {tensor product $\cM_\cF\otimes_\cF\cN_\cF$ of $\cM_\cF$ and $\cN_\cF$ over $\cF$}
\index{$q$-difference!module!tensor product of --}
is the $q$-difference
module such that:
\begin{itemize}
\item
the underlying $\cF$-vector space is $M_\cF\otimes_\cF N_\cF$;
\item
the $q$-difference operator is a $\sgq$-semilinear bijection
defined by $m\otimes n\mapsto \Sg_{q,M}(m)\otimes\Sg_{q,N}(n)$.
\end{itemize}
The {dual $q$-difference module $\cM_\cF^*=(M_\cF^*,\Sg_{q,M}^*)$ of $\cM_\cF$}
\index{$q$-difference!module!dual}
is the $q$-difference
module defined as follows:
\begin{itemize}
\item
the underlying $\cF$-vector space $M_\cF^*$ is the dual $\cF$-vector space of $M_\cF$;
\item
$\Sg_{q,M}^*:\varphi\mapsto \sgq^{-1}\circ\varphi\circ\Sg_{q,M}$, i.e.,
for any $m\in M_\cF$ and any $\varphi\in M_\cF^*$ we have
$\langle\Sg_{q,M}^*(\varphi),m\rangle=\sgq^{-1}\langle\varphi,\Sg_{q,M}(m)\rangle$.
\end{itemize}
{We say that a $q$-difference module $\cN_\cF$ over $\cF$ is a   construction of linear algebra
of $\cM_\cF$ if $\cN_\cF$ can be deduced from  $\cM_\cF$ by direct sums, duals, tensor products.
In the Tannakian formalism, one considers also the sub-quotients of those, but
for our purpose it is enough to  consider  the collection of submodules of the finite direct sums of
$\bigoplus \cM_\cF^{\otimes i}\otimes_\cF\l(\cM_\cF^*\r)^{\otimes j}$, for any pair of non negative integers
$i,j$ (see \cite[\S 3.2.2]{andreens}).

\subsection{Basis}
Let $\cM_{\cF}=(M_{\cF},\Sgq)$ be a $q$-difference module over $\cF$ of rank $\nu$.
We fix a basis $\ul e$ of $M_{\cF}$ over $\cF$.
Let $A\in \GL_\nu(\cF)$ be such that:
$$
\Sgq\ul e=\ul e A.
$$
If $\ul f$ is another basis of $M_\cF$, such that $\ul f=\ul e F$, with $F\in\GL_\nu(\cF)$, then
$\Sgq\ul f=\ul f B$, with $B=F^{-1}A\sgq(F)$.

\begin{prop}\label{prop:finitegeneratedextension}
Let $K$ be a field as above, $\cM_{K(x)}$ a $q$-difference module over $K(x)$ and
let $k=\Q$ or $\F_p$, according that the field $K$ has characteristic zero or $p>0$, respectively.
For any $q$-difference module $\cM_{K(x)}$ there exist a finitely generated extension
$\wtilde K\subset K$ of $k$, containing $q$, and a $q$-difference module $\cM_{\wtilde K(x)}$ such that
$\cM_{K(x)}=\cM_{\wtilde K(x)}\otimes_{\wtilde K(x)}K(x)$.
\end{prop}

\begin{proof}
To prove the lemma, it suffices to fix a basis $\ul e$ of $\cM_\cF$ and to
consider a field $\wtilde K$ generated over $k$ by $q$ and all the entries
of the matrix of $\Sgq$ with respect to
the basis $\ul e$.
\end{proof}

\begin{rmk}\label{rmk:scalarextention}
We will always denote with the same letter, but with different subscripts,
$q$-difference modules that become isomorphic after an extension
of the base field, as in the statement above.
\end{rmk}

\subsection{Horizontal vectors}
A {horizontal vector of $\cM_\cF$}
\index{horizontal vector}
is an element $m\in M_\cF$ such that $\Sgq(m)=m$.
We denote by $\cM_\cF^{\Sgq}$ the set of horizontal vectors of $\cM_\cF$. One proves easily that it is a
$\cF^{\sgq}$-vector space. The dimension of $\cM_\cF^{\Sgq}$ is invariant by extension of the constants:

\begin{prop}\label{prop:constantextentionVSsolutionspace}
Let $\cF$ be a $q$-difference field  with
$K=\cF^{\sgq}$ and  let $K^\p$ be a $\sgq$-constant field extension of $K$. Let $\cM_\cF$ be a $q$-difference module over $\cF$ and
$\cM_{\cF(K^\p)}=\cM_\cF\otimes_\cF\cF(K^\p)$ the $q$-difference
module over $\cF(K^\p)$ obtained by scalar extension.
Then $\l(\cM_{\cF(K^\p)}\r)^{\Sgq}=\cM_\cF^{\Sgq}\otimes_K K^\p$.
\end{prop}

\begin{proof}
First of all notice that $\cF(K^\p)^{\sgq}=K^\p$. We have a natural injective map
$$
K^\p \otimes_K\cM_\cF^{\Sgq}\longrightarrow \l(\cM_{\cF(K^\p)}\r)^{\Sgq}.
$$
We have to show that it is also surjective.
Let $\ul e$ be a basis of $M_\cF$ over $\cF$ such that $\Sgq\ul e=\ul e A$,
with $A\in\GL_\nu(\cF)$. Let $z \in \l(\cM_{\cF(K^\p)}\r)^{\Sgq}$ and let us
write $z = \ul e Z$, where $Z \in \cF(K^\p)^\nu$.
The set
$$
\mathfrak{a}:=\{r \in K^\p\otimes_K\cF \hbox{~s.t.~}
rZ \in (K^\p\otimes_K\cF)^\nu  \}
$$
is a non-zero ideal of $K^\p\otimes_K\cF $
stable\footnote{In the whole paper, ``stable '' means ``setwise fixed'', i.e..
$\sgq \mathfrak a\subset\mathfrak a$.} under $\sgq$.
Indeed, if $r \in \mathfrak{a}$ then  $\Sgq(rz) =\ul e  A \sgq(rZ)$ and $  A \sgq(rZ) \in (K^\p\otimes_K\cF )^\nu$. Since $\sgq(r)z= \Sgq(rz)$, we find that
$\sgq(r)\in \mathfrak a$.
By \cite[Lemma 1.11]{vdPutSingerDifference}, the algebra $K^\p\otimes_K\cF $
has no non trivial ideal stable under $\sgq$.
Thus $1$ belongs to the ideal $\mathfrak{a}$, which implies that $Z \in (K^\p\otimes_K\cF )^\nu$.
\par
Let $\{\la_i\}_i\subset K^\p$ be a
(maybe, infinite) basis of $K^\p/K$.
We can write $z=\sum_i \la_i\otimes \ul e\vec y_i$, for some $\vec y_i\in\cF^\nu$, not all zero.
Since $\Sgq(z)=z$, we obtain:
$$
\sum_i \la_i\otimes \ul e\vec y_i=\sum_i \la_i\otimes \ul eA\sgq(\vec y_i),
$$
where $\sgq$ acts on vectors componentwise.
We conclude that $\vec y_i=A\sgq(\vec y_i)$ for all $i$ and therefore that
$\ul e\vec y_i\in\cM_\cF^{\Sgq}$, for all $i$.
This ends the proof.
\end{proof}

\subsection{\texorpdfstring{$q$}{q}-difference modules over a ring}
\label{subsec:qdiff modules over a ring}
In the sequel, we will deal with $q$-difference modules over rings. We do not want
to be too formal on this point, since notations and definitions are quite intuitive.
\par
{Let $\cO$ be a commutative unitary ring and $q\neq 0,1$ be an invertible element of $\cO$. Then $\sgq$ defines an automorphism of
the ring of polynomials $\cO[x]$. We will call $q$-difference ring over $\cO$ an $\cO[x]$-algebra,
equipped with an injective endomorphism extending $\sgq$. Sometimes,
we shall not mention the ring $\cO$ since the $q$-difference rings appearing in the next
chapters will be always explicitly described, and will mainly be of the form
described in the example below.}

\begin{exa}
{Let $\cO$ be a unitary subring of $K$ containing $q,q^{-1}$. We will be interested in
$q$-difference rings of the form:
$$
\cO\l[x,\frac{1}{P(x)},\frac{1}{P(qx)},\frac{1}{P(q^2x)},\dots\r],
$$
for some $P(x)\in \cO[x]$, which are subrings of $K(x)$, stable by $\sgq$.}
\end{exa}

{Let $\cA$ a $q$-difference ring over $\cO$ and $\cA^\p$ be a $q$-difference subring of $\cA$ over $\cO$, that is, an $\cO[x]$-subalgebra of $\cA$, stable by $\sgq$.
We say that a $q$-difference ring $\cB\subset\cA$, containing $\cA^\p$, is finitely generated (as a $q$-difference ring) over $\cA^\p$ if there exists
a finite set $S$ such that $\cB$ is the smallest subring of $\cA$, containing $\cA^\p$, $S$ and stable by $\sgq$.}

\medskip
A {$q$-difference module $\cM=(M,\Sgq)$ over $\cA$} will be a free
$\cA$-module $M$ of finite rank, equipped with a semilinear invertible
operator\footnote{{Since we will always deal with rings $\cA$ that are domains,
we could have asked
that $\Sgq$ is only injective, but then, enlarging the scalars to a
$q$-difference algebra $\cA^\p$ containing $\cA$, constructed inverting some elements, we would have obtained an invertible operator. So for our purpose,
the assumption that $\Sgq$ is invertible is not restrictive.}} $\Sgq$.
All the notions introduced above generalize intuitively to this case.
\par
If $\cA$ is a domain and $\cF$ is the fraction field of $\cA$, then
$$
\cM_{\cF}=(M_{\cF}:=M\otimes_\cA\cF,\Sgq\otimes\sgq)
$$
is a $q$-difference module over
$\cF$.

\begin{lemma}
{Any $q$-difference module over $\cF$ comes from
a $q$-difference module over $\cA$, for a suitable choice of a $q$-difference ring $\cA\subset\cF$, finitely generated as $q$-difference ring over $\Z$, if the characteristic of $\cF$ is zero, or over $\F_p$, otherwise.}
\end{lemma}

\begin{proof}
{Let $\cM_{\cF}=(M_\cF,\Sgq)$ be a $q$-difference module over $\cF$ and $k=\Z$ or $\F_p$, according to the characteristic of $\cF$. We fixe a basis $\ul e$ of $M_\cF$ over $\cF$ so that $\Sgq\ul e=\ul eA$, for some $A\in \GL_\nu(\cF)$.
To conclude it is enough to consider the smallest ring $\cA\subset \cF$
containing $k[q,q^{-1},x]$, the entries of the matrix $A$, the inverse of $\det A$ and stable by $\sgq$.}
\end{proof}

\section{\texorpdfstring{$q$}{q}-difference modules, systems and equations}

Let $\cM_{\cF}=(M_{\cF},\Sgq)$ be a $q$-difference module of rank $\nu$ over
a $q$-difference field $\cF$.
We fix a basis $\ul e$ of $M_{\cF}$ over $\cF$, such that:
$$
\Sgq\ul e=\ul e A,
$$
with $A\in \GL_\nu(\cF)$.

\begin{defn}
We call
\beq\label{eq:system}
\sgq(Y)=A^{-1}Y,
\eeq
the {($q$-difference) system (of order $\nu$) associated to $\cM_{\cF}$,
with respect to the basis $\ul e$}.
\index{$q$-difference!system}
\end{defn}

If $\vec y\in \cF^\nu$ are the coordinates of a horizontal vector
$m\in M_\cF$ with respect to the basis $\ul e$,
then $\vec y$ verifies $\Sgq(\ul e\vec y)=\ul e\vec y$, \ie, $\vec y=A\sgq(\vec y)$.
This means that $\vec y$ is a solution vector of the $q$-difference system \eqref{eq:system}.
On the other hand, a solution vector of \eqref{eq:system} always represents a horizontal
vector of $\cM_\cF$ in the basis $\ul e$.
\par
Two systems are said to be equivalent by gauge transformation if they are associated to the same
$q$-difference module, with respect to two different basis.
Of course, one associates a
$q$-difference module, with underlying $\cF$-vector space $\cF^\nu$,
to any $q$-difference system of order $\nu$.

\smallskip
To a given linear $q$-difference equation
\beq\label{eq:eq}
a_0y+a_1\sgq y+\dots+a_\nu\sgq^\nu y=0,
\hbox{~ with $a_1,\dots,a_\nu\in\cF$ and $a_0a_\nu\neq 0$,}
\eeq
one naturally associates a linear $q$-difference system
\beq\label{eq:companionsystem}
\sgq(Y)=
\begin{pmatrix}
\begin{matrix}0\\\vdots\\0\\\hline-a_0/a_\nu\end{matrix}
&\vline&
\begin{matrix}1&&0\\&\ddots&\\0&&1\\\hline -a_1/a_\nu&\dots&-a_{\nu-1}/a_\nu\end{matrix}
\end{pmatrix}Y.
\eeq
If $z$ is a solution of \eqref{eq:eq} in some $q$-difference  field extension of $\cF$, then
the vector ${}^t(z,\sgq(z),\dots,\sgq^{\nu-1}(z))$ is a solution column of \eqref{eq:companionsystem}.
The equation \eqref{eq:eq} has at most $\nu$ solutions in a $q$-difference field extension $\cG$ of $\cF$,
which are linearly independent over the field $\cG^{\sgq}$ of $\sgq$-invariant elements of $\cG$.
If $z_1,\dots,z_\nu$ are those solutions, then the $q$-analog of the Wronskian Lemma see \cite[\S7]{casorati1880calcolo})
says that the matrix
$$
\begin{pmatrix}
z_1&\dots&z_\nu\\
\sgq(z_1)&\dots&\sgq(z_\nu)\\
\vdots&\dots&\vdots\\
\sgq^{\nu-1}(z_1)&\dots&\sgq^{\nu-1}(z_\nu)\\
\end{pmatrix}
$$
is an invertible solution of \eqref{eq:companionsystem}.
\par
Given a $q$-difference module
$(M_\cF,\Sgq)$ of rank $\nu$ over $\cF$, such that $q$ is not a root of unity of order smaller than $\nu$,
the Cyclic Vector Lemma (see for instance \cite[\S1.3]{DVInv}) allows to find an element $m$ of $M_\cF$, called
{cyclic element},
\index{cyclic element}
such that $m,\Sgq(m),\dots,\Sgq^{\nu-1}(m)$ is a
basis of $M_\cF$.

\section{Some remarks on solutions}
\label{sec:Some remarks on solutions}

Let $\sgq(Y)=BY$ be a $q$-difference system, with $B\in\GL_\nu(\cF)$.

\begin{defn}
Let $\cG$ be a $q$-difference field extension of $\cF$.
A {fundamental solution matrix of $\sgq(Y)=BY$ in $\cG$} is an invertible matrix $F$,
with entries in $\cG$, such that $\sgq(F)=BF$.
\end{defn}

Recursively, we obtain from $\sgq(Y)=BY$ a family of higher order $q$-difference systems:
$$
\sgq^n(Y)=B_n Y
\hbox{~and~}\dq^nY=G_nY,
$$
with $B_n \in \GL_\nu(\cF)$
and $G_n \in M_\nu(\cF)$, for any positive integer $n$. One can easily check that
$B_1:=B$ and:
$$
B_{n+1}= \sgq(B_n)B_1,\,
G_1=\frac{B_1-1}{(q-1)x}
\hbox{~and~}
G_{n+1}=\sgq(G_n)G_1+\dq G_n.
$$
It is convenient to set $B_0=G_0=1$
and $G_{[n]}=\frac{G_n}{[n]_q^!}$ for any $n\geq 0$.
Notice that $G_{[n]}$ is well defined even if $q$ is a root of unity.

\begin{prop}\label{prop:resolvent}
Let $\cF=K(x)$ and suppose that the matrix $G_1$ does not have a pole at $0$
(or equivalently that $B$ does not have a pole at $0$ and that $B(0)$ is the identity matrix),
then $W(x)=\sum_{n\geq 0}G_{[n]}(0)x^n$ is a fundamental solution matrix (in $K((x))$) of the system
$\sgq(Y)=B Y$. Moreover, it is the only fundamental solution matrix with coefficients
in $K[[x]]$, whose constant term is the identity.
\par
If $K$ is a field equipped with a norm such that $|q|\neq 1$, then $\sum_{n\geq 0}G_{[n]}(0)x^n$ has a
non-zero radius of convergence and, hence, an infinite radius of meromorphy.\footnote{In the sense that its entries
are quotient of two entire
analytic functions with respect to $|~|$.}
\end{prop}

The proof of the proposition above is similar to the proof of the resolvent in the differential case.
Proposition \ref{prop:resolvent} has a multiplicative avatar:

\begin{prop}\label{prop:infproduct}
Let $K$ be a field,
$|~|$ a norm (archimedean or ultrametric) over $K$
and $q$ an element of $K$, such that $|q|>1$.
We consider a $q$-difference system $Y(qx)=B(x)Y(x)$ such that $B(x)\in \GL_\nu(K(x))$,
zero is not a pole of $B(x)$ and such that $B(0)$ is the
identity matrix.
Then the infinite product
$$
\Big(B(q^{-1}x)B(q^{-2}x)B(q^{-3}x)\dots\Big)
$$
is the germ at zero of the analytic fundamental solution matrix $Z(x)$
such that $Z(0)$ is the identity.
{Moreover, $Z(x)$ has infinite
radius of meromorphy.}
\end{prop}

\begin{proof}
If $|q|>1$, the infinite product defining $Z(x)$ is convergent in the neighborhood
of zero and it is a solution of $Y(qx)=B(x)Y(x)$,
such that $Z(0)$ is the identity matrix.
The fact that $Z(x)$ is a meromorphic function with infinite radius of meromorphy
follows from the fact that the functional equation $Y(qx)=B(x)Y(x)$ ``propagates'' meromorphy.
\end{proof}

\begin{rmk}
Independently of the characteristic of $K$, if $q$ is not a root of unity, then we can always find a norm
over $K$ such that $|q|>1$. Of course, the norm does not need to be archimedean.
\end{rmk}

\begin{rmk}
Finally, in Proposition \ref{prop:infproduct}, if $|q|<1$ then one has to consider the
product $\prod_{n\geq 0}B(q^n x)^{-1}$.
\end{rmk}

\section{Trivial \texorpdfstring{$q$}{q}-difference modules}
\label{sec:trivial modules}

The purpose of the second part of this work is to give an arithmetic characterization
of trivial $q$-difference modules, where trivial means:

\begin{defn}\label{defn:trivial modules}
We say that the $q$-difference module $\cM=(M,\Sgq)$ of rank $\nu$
over a $q$-difference algebra $\cA$ is {trivial}
\index{$q$-difference!module!trivial}
if there exists a basis $\ul f$ of $M$ over $\cF$ such that $\Sgq\ul f=\ul f$.
\end{defn}

The definition applies in particular to the case of a $q$-difference module over a field.
For further reference, we state some properties of trivial $q$-difference modules.

\begin{prop}\label{prop:triviality-rationalsolutions}
Let $\cF$ be a $q$-difference field  and $\cM_\cF$ be a $q$-difference module over $\cF$.
The following statements are equivalent:
\begin{enumerate}
\item
The $q$-difference module $\cM_\cF$ is trivial.
\item
There exists a basis
$\ul e$ of $\cM_\cF$ such that the $q$-difference system
associated to $\cM_\cF$ with respect to the basis $\ul e$ has an invertible solution matrix in $\GL_\nu(\cF)$.
\item
For any basis $\ul e$ of $\cM_\cF$,
the $q$-difference system
associated to $\cM_\cF$ with respect to the basis $\ul e$ has an invertible solution matrix in $\GL_\nu(\cF)$.
\item
$\dim_{\cF^{\sgq}}\cM_\cF^{\Sgq}=\dim_\cF M_\cF$.
\end{enumerate}
\end{prop}

\begin{proof}
Let $\ul e$ be a basis of $\cM_\cF$, such that $\Sgq\ul e=\ul e A(x)$, and
$\ul f$ be a basis of $\cM_\cF$, such that $\ul f=\ul e F(x)$, with
$F(x)\in\GL_\nu(\cF)$.
Then $\Sgq\ul f=\ul f$ if and only if
$$
\ul f=\Sgq(\ul eF(x))=\ul eA(x)F(qx)=\ul f F(x)^{-1}A(x)F(qx),
$$
therefore if and only if $F(qx)=A(x)^{-1}F(x)$.
This proves the equivalence among (1), (2) and (3).
The equivalence between (1) and (4) follows from the fact that $\ul f$ is  both a basis of
$M_\cF$ over $\cF$ and of $\cM_\cF^{\Sgq}$ over $\cF^{\sgq}$.
\end{proof}

The following statement is a corollary of the proposition above and of Proposition \ref{prop:constantextentionVSsolutionspace}:

\begin{cor}\label{cor:triviality-constantextension}
Let $K$ be a field, $q\neq 0,1$ be an element of $K$,
and $\cM_{K(x)}$ be a $q$-difference module over $K(x)$.
Let $K^\p$ be an extension of $K$, on which $\sgq$ acts as the identity, and let $\cM_{K^\p(x)}=\cM_{K(x)}\otimes_{K(x)}K^\p(x)$.
Then $\cM_{K(x)}$ is trivial if and only if $\cM_{K^\p(x)}$ is trivial.
\end{cor}

\begin{proof}
It follows from Proposition \ref{prop:constantextentionVSsolutionspace}
that $\cM_{K^\p(x)}^{\Sgq}=\cM_{K(x)}^{\Sgq}\otimes_K K^\p$.
\end{proof}

Finally we consider the case of a $q$-difference module whose associated system has {an}
algebraic solution
over the base field $K(x)$.
{The following proposition is part of $q$-difference equations folklore.
Here we chose a quite down-to-earth approach.
For a more elegant proof see \cite[Lemme 4.4]{ChenSinger:ResiduesAndTelescopersForBivariateRationalFunctions}.}

\begin{prop}
Let $K$ be a field and $q$ be  an element of $K$ which is not a root of unity. We suppose that
there exists a norm $|~|$ over $K$, such that $|q|\neq 1$,
and we consider a linear $q$-difference equation
\beq\label{eq:algebraicsolutions}
a_\nu(x)y(q^\nu x)+a_{\nu-1}(x)y(q^{\nu-1} x)+\dots+a_0(x)y(x)=0
\eeq
with coefficients in $K(x)$. If there exists
an algebraic $q$-difference field extension $\cF$ of $K(x)$ containing a solution $f$ of
\eqref{eq:algebraicsolutions},
then $f$ is contained in an extension of $K(x)$ isomorphic to $L(\wtilde q, t)$,
with $\wtilde q^r=q$,$t^r=x$ and $L|K$ is a finite field extension.
\end{prop}

\begin{proof}
Let us look at \eqref{eq:algebraicsolutions} as an equation with coefficients
in $K((x))$. Then the algebraic solution $f$ of \eqref{eq:algebraicsolutions}
can be identified to a Laurent series in $\ol K((t))$, where $\ol K$ is the algebraic closure of $K$
and $t^r=x$, for a {suitable} positive integer $r$.
Let $\wtilde q$ be an element of $\ol K$ such that $\wtilde q^r=q$ and that
$\sgq(f)=f(\wtilde q t)$. We can look at \eqref{eq:algebraicsolutions}
as a $\wtilde q$-difference equation with coefficients in $K(\wtilde q,t)$.
Then the recurrence relation induced by \eqref{eq:algebraicsolutions}
over the coefficients of a formal solution shows that there exist
$f_1,\dots,f_s$ solutions of \eqref{eq:algebraicsolutions} in $K(\wtilde q)((t))$
such that $f\in\sum_i\ol K f_i$.
It follows that there exists a finite extension $\wtilde K$ of $K(\wtilde q)$
such that $f\in\wtilde K((t))$.
\par
We fix an extension of $|~|$ to $\wtilde K$, that we still call $|~|$.
Since $f$ is algebraic, it is a germ of meromorphic function at $0$.
Since $|\wtilde q|\neq 1$,
the functional equation \eqref{eq:algebraicsolutions} itself allows
to show that $f$ is actually a meromorphic function with infinite radius of meromorphy.
Finally, if we choose $r$ large enough, $f$ can have at worst a pole at $t=\infty$, since it is an algebraic function,
which actually implies that $f$ is the Laurent expansion of a rational function in
$\wtilde{K}(\wtilde q,t)$.
\end{proof}

We recall the following properties of $q$-difference fields
(see \cite[Lemma A.4]{ChenSinger:ResiduesandTelescopers} for the case of characteristic zero):

\begin{cor}\label{cor:finiteextensionofqdifffield}
Let $K$ be a field, $q\in K$ be not a root of unity and $\cM_{K(x)}$ a $q$-difference module over $K(x)$.
If there exists a finite $q$-difference field extension $\cF$ of $K(x)$ such that $\cM_\cF=\cM_{K(x)}\otimes_{K(x)}\cF$ is trivial,
then there exists a positive integer $r$ such that $\cF\subset L(x^{1/r})$, where $L|K$
is a finite $\sgq$-constant field extension.
\end{cor}

\begin{proof}
It is enough to apply the previous proposition to the entries of a fundamental
solution matrix of the $q$-difference system associated to a cyclic basis of $\cM_{K(x)}$.
\end{proof}

\chapter{Formal classification of singularities}

\section{Regularity}
\label{sec:regularity}

Let $\cA$ be a $q$-difference subring of $K((x))$. We recall the following basic definition
(see for instance \cite{vdPutSingerDifference} or \cite{Sfourier}).

\begin{defn}\label{defn:regsing}
A $q$-difference module $(M,\Sgq)$ over $\cA$
is said to be {regular singular at $0$}
\index{$q$-difference!module!regular singular}
\index{$q$-difference!system!regular singularity},
if there exists a basis $\ul e$ of $(M\otimes_\cA K((x)),\Sgq\otimes\sgq)$ over $K((x))$
such that the action of $\Sgq\otimes\sgq$ over $\ul e$ is represented by a
constant matrix $A\in \GL_\nu(K)$.
\end{defn}

{We recall the following statement on regular singular $q$-difference modules, also known as Frobenius method or algorithm.
See \cite{vdPutSingerDifference} or \cite[\S1.1]{Sfourier}.
The Frobenius method is also briefly summarized also in \cite[\S1.2.2]{SauloyENS} and \cite{gazette}.}

\begin{prop}\label{prop:Frobenius}
{Let $\cM_{K((x))}=(M_{K((x))},\Sgq)$ be a $q$-difference system
over $K((x))$. We suppose that there exists a basis $\ul e$ of $M_{K((x))}$
such that $\Sgq\ul e=A(x)\ul e$, with $A(x)\in GL_\nu(K((x)))\cap M_\nu(K[[x]])$. Let $A_0=A(0)$.  Then:
\begin{enumerate}
\item
If for any two distinct eigenvalues $\a$ and $\be$ of $A_0$ we have
$\a\be^{-1}\not\in q^\Z$, then there exists a basis change $\ul f=\ul eF(x)$ of $M_{K((x))}$
such that $F(x)\in\GL_\nu(K((x)))\cap M_\nu(K[[x]])$, $F(0)$ is the identity matrix and $\Sgq\ul f=\ul f A_0$.
\item
If the assumption of statement (1) above is not verified, a basis change
in $\GL_\nu\l(K\l[x,\frac{1}{x}\r]\r)$, called shearing transformation, obtained multiplying alternatively
invertible matrices with coefficients in $K$ and matrices diagonal matrices whose diagonal has the form $1,\dots,1,x^{\pm 1},1,\dots,1$, allows to reduce
to the assumption of (1).
\end{enumerate}}
\end{prop}

{We point out a refinement of the statement above:}

\begin{prop}[{(See \cite[\S2.1]{Sfourier}.)}]
{A $q$-difference module $M_{K(x)}$ over $K(x)$ is regular singular if and only if
there exists a basis $\ul e$ over $K(x)$ such that $\Sgq\ul e=\ul e A(x)$ with $A(x)\in \GL_\nu(K(x))\cap M_\nu(K[[x]])$.}
\end{prop}

The eigenvalues
of $A(0)$ are called the {exponents of $\cM$ at $0$}. They are well defined
modulo $q^\Z$.
The $q$-difference module $\cM$ is said to be
regular singular {\emph{tout court}}  if it is regular singular
both at $0$ and at $\infty$, \ie, after a variable change of the form $x=1/t$.

\medskip
For further reference, we explicitly state the following lemma, which is a consequence of
the  Frobenius algorithm:

\begin{prop}\label{prop:trivialityK((x))}
Let $\cM=(M,\Sgq)$ be a $q$-difference module over a $q$-difference subring $\cA$ of {$K(x)$}.
We assume that $q$ is not a root of unity.
The following statements are equivalent:
\begin{enumerate}
\item
There exists a basis $\ul e$ such that $\Sgq\ul e=\ul e A(x)$, with $A(x)\in \GL_\nu(K(x))\cap \GL_n(K[[x]])$,
and such that $A(0)$ is a diagonal matrix with eigenvalues in $q^\mathbb{Z}$
(\ie, $\cM$ has a regular singularity at $0$, with integral exponents and no logarithmic singularity at $0$).

\item
The $q$-difference module $\cM_{K((x))}$ is trivial.

\end{enumerate}
\end{prop}

\medskip
Singular regularity can be characterized with the help of a Newton polygon.
Namely, regular singular $q$-difference modules are the ones whose Newton polygon has only one
finite slope equal to $0$ (see \cite[Page 200]{sauloyfiltration}).
We are not going to define or to list the properties of Newton polygons. We only point out that
they are the key to the proof of the statements below.
\par
Let $\cM_{K(x)}$ be a $q$-difference module of rank $\nu$
and let $r\in\N$ be a positive integer.
We consider a finite extension $L$ of $K$ containing an
element $\wtilde q$
such that $\wtilde q^{\,r}=q$. We consider the field extension $K(x)\hookrightarrow L(t)$, $x\mapsto t^r$.
The field $L(t)$ has a natural structure of
$\wtilde q$-difference field extending the $q$-difference structure of $K(x)$.
If follows from \cite[\S 1.1.4]{sauloyfiltration} that:

\begin{prop}\label{prop:basis change}
The $q$-difference module $\cM$ is regular singular at $x=0$
if and only if
the $\wtilde q$-difference module $\cM_{L(t)}:=(M\otimes_{\cA} L(t),
\Sg_{\wtilde q}:=\Sgq\otimes\sg_{\wtilde q})$ over $L(t)$
is regular singular at $t=0$.
\end{prop}

\section{Irregularity}

Next statement gives the structure of general $q$-difference modules.
It can be deduced from the formal classification of $q$-difference modules
(see \cite[Corollary 9 and \S9, 3)]{Praag}, \cite[Theorem 3.1.6]{sauloyfiltration}):

\begin{prop}
We assume that $q$ is not a root of unity.
Let $\cM_{K(x)}$ be a $q$-difference module of rank $\nu$ over $K(x)$.
Then there exists a positive integer $r$ and a finite extension $L(t)$ of $K(x)$, with $t^r=x$, $r\vert \nu!$,
and $\wtilde q\in L$, with $(\wtilde q)^r=q$ such that
$\cM_{K(x)}\otimes L((t))$ is a direct sum of $\wtilde q$-difference modules $\cN_i$.
For any $i$ there exists a basis $\ul e_i$ of $\cN_i$ and a positive integer $r_i$
such that $\Sg_{\wtilde q}\ul e_i=\ul e_i \frac{B_i}{t^{r_i}}$, with $B_i$ an invertible matrix with coefficients in $L$.
\end{prop}

\begin{cor}\label{cor:rationalgauge}
There exist an extension $L(t)/K(x)$ as above, a basis $\ul f$ of the
$\wtilde q$-difference module $\cM_{L(t)}$ and
an integer $\ell$
such that $\Sg_{\wtilde q}\ul f=\ul f B(t)$, with $B(t)\in \GL_\nu(L(t))$ of the following form:
\beq\label{nonnilp}
\left\{
\begin{array}{l}
  \ds B(t)=\frac{B_\ell}{t^\ell}+\frac{B_{\ell-1}}{t^{\ell-1}}+\dots, \hbox{as an element of $\GL_\nu(L((t)))$;} \\
  \hbox{$B_\ell$ is a constant non-nilpotent matrix.} \\
\end{array}
\right.
\eeq
\end{cor}

\part{Triviality of \texorpdfstring{$q$}{q}-difference equations with rational coefficients}
\label{part:Triviality}

\chapter{Rationality of solutions, when \texorpdfstring{$q$}{q} is an algebraic number}
\label{chap:algebraic}

Let $K$ be a field and $q\neq 0,1$ be an element of $K$.
We are concerned with the problem of finding a necessary and sufficient
condition for
a $q$-difference module $\cM_{K(x)}=(M_{K(x)},\Sgq)$ over $K(x)$ to be trivial
(see Definition \ref{defn:trivial modules}).
This is equivalent to the problem of finding a necessary and sufficient condition
for a linear $q$-difference system with coefficients in $K(x)$
to have a fundamental solution matrix with entries in $K(x)$.
\par
Notice that we are not making any assumption on the characteristic of $K$.
We have to consider the following cases:
\begin{enumerate}
\item
$q$ is a root of unity;
\item
$q$ is algebraic over the prime field, but is not a root of unity;
\item
$q$ is transcendental over the prime field.
\end{enumerate}
These six cases (three cases for the characteristic zero, and three cases for the positive one)
actually boil down to three. In fact, we will first consider the (trivial)
situation in which $q$ is a root of unity: If $K$ has positive characteristic this includes
both (1) and (2) above.
Then we will consider the case in which $K$ has characteristic zero and $q$ is algebraic over $\Q$.
Finally, in the next chapter, we will consider the case in which $q$ is transcendental over the prime field, $\Q$ or $\F_p$,
regardless of the characteristic.

\medskip
It is not difficult to prove that:

\begin{prop}[{\cite{Hendrikstesi}} or {\cite[Proposition  2.1.2]{DVInv}}]\label{prop:GrothKatzroot}
If $q$ is a primitive root of unity of order $\kappa$,
a $q$-difference module $\cM_{K(x)}$ over $K(x)$ is trivial if and only if
$\Sgq^\kappa$ is the identity.
\end{prop}

The proposition above completes the study of the triviality of $q$-difference modules when $q$ is a
root of unity, at least as far as the problem we are considering here is regarded.
We refer to \cite{HardouinIterative} for a more sophisticated approach.

\section{The case of \texorpdfstring{$q$}{q} algebraic, not a root of unity}

If $q$ is algebraic, but not a root of unity, we are necessarily in characteristic zero.
The example below gives the guidelines for the whole chapter.

\begin{exa}
Let $K=\Q(a)$ be a purely transcendental extension of degree $1$ and let
$q\in\Q\smallsetminus\{0,1,-1\}$.
We consider a $q$-difference module
$\cM_{K(x)}=(M_{K(x)}, \Sgq)$ over $K(x)$.
Let us choose a basis $\ul e$ of $M_{K(x)}$ and let
$Y(qx)=B(a,x)Y(x)$
be the associated $q$-difference system. One can construct by hand a $\Z$-algebra stable under $\sgq$, of the form:
$$
\cA=\Z\l[a,x,\frac{1}{P(x)},\frac{1}{P(qx)},...\r],
$$
for a {suitable} choice of $P(x)\in\Z[a,x]$, such that
$q\in\cA$ and $B(a,x)$ and $B(a,x)^{-1}$ are both matrices with coefficients in $\cA$.
For almost all primes $p$ in $\Z$, one can reduce both $q$ and $\cA$ modulo $p$, and hence the coefficients
of $B(a,x)$. In particular, for all such $p$'s there exist a minimal positive integer $\kappa_p$ and a  positive integer $\ell_p$,
such that $q^{\kappa_p}\equiv 1$ modulo $p$ and $q^{\kappa_p}-1=p^{\ell_p}\frac{r}{s}$, with $r,s$ prime to $p$.
The main result of this chapter (see Theorem \ref{thm:GrothKatzalg} below)
is that the system $Y(qx)=B(a,x)Y(x)$ has a fundamental solution matrix
with coefficients in $K(x)$ if and only if for almost all $p$ we have
\beq\label{eq:conditionexample}
B(a,q^{\kappa_p-1}x)B(a,q^{\kappa_p-2}x)\cdots B(a,x)\equiv 1
\hbox{~modulo $p^{\ell_p}$, \ie, in $\cA/p^{\ell_p}\cA$.}
\eeq
This last condition is equivalent to the fact that
the reduction modulo $p^{\ell_p}$ of the operator $\Sgq^{\kappa_p}$ is the identity, and
is verified, in particular, if
the reduction of the system $Y(qx)=B(a,x)Y(x)$ modulo
$p^{\ell_p}$ has a fundamental solution matrix with coefficients in $\cA/p^{\ell_p}\cA$.
We will proceed as follows: We will first prove that
the system $Y(qx)=B(a,x)Y(x)$ has a fundamental solution matrix
with coefficients in $K(x)$ if and only if, for all $\a$ in a dense subset of the algebraic closure $\ol\Q$ of $\Q$, the system $Y(qx)=B(\a,x)Y(x)$ has a fundamental solution matrix
with coefficients in $\ol\Q(x)$.
As a consequence of \cite[Theorem 7.1.1]{DVInv}, we will show that
this last condition, holding for all $\a$ in a dense subset of $\ol\Q$, is equivalent to
\eqref{eq:conditionexample}.
\end{exa}

First of all we need to introduce some notation, that generalizes
the one in the previous example to the case of a number field.
With no loss of generality, we will assume that $K$ is finitely generated over $\Q$
(see Proposition \ref{prop:finitegeneratedextension}).
Let $Q$ be the algebraic closure of $\Q$ inside $K$.
Then the field $K$ has the form $Q(\ul a,b)$, where $\ul a=(a_1,\dots,a_r)$ is a transcendence basis
of $K/Q$ and $b$ is a primitive element of the algebraic extension $K/Q(\ul a)$.
We call $\cO_Q$ the ring of integers of $Q$, $v$ a finite place of $Q$ and $\pi_v$ a $v$-adic uniformizer
in $\cO_Q$.
\par
We fix an element $q\in K$ which is algebraic over $\Q$ and not a root of unity, i.e.,
an element $q\in Q$ which is not a root of unity.
For almost all $v$,
\begin{itemize}
\item
the order $\kappa_v$ of $q$ modulo $v$, as a root of unity,
\item
the positive integer power $\phi_v$ of
$\pi_v$, such that $\phi_v^{-1}(1-q^{\kappa_v})$
is a unit of $\cO_Q$,
\end{itemize}
are well defined.
\par
We consider a
$q$-difference module $\cM_{K(x)}=(M_{K(x)},\Sgq)$ over $K(x)$, of finite rank $\nu$.
Choosing conveniently the set of generators $\ul a,b$ of $K/Q$,
we can always find a $q$-difference algebra $\cA$ of the form:
\beq\label{eq:algebraAalg}
\cA=\cO_Q\l[\ul a,b,x,\frac{1}{P(x)},\frac{1}{P(qx)},...\r],
\eeq
for some $P(x)\in \cO_Q\l[\ul a,b,x\r]$,
and a $\Sgq$-stable $\cA$-lattice $M$ of $\cM_{K(x)}$ such that the restriction of $\Sgq$ to $M$ is invertible.
According to the definition in \S\ref{subsec:qdiff modules over a ring},
the pair $\cM=(M,\Sgq)$ is a $q$-difference module over the
ring $\cA$.

\begin{notation}
For a given $q$-difference module $\cM_{K(x)}=(M_{K(x)},\Sgq)$ over $K(x)$,
the pair $\cM=(M,\Sgq)$ will always denote a $q$-difference module
over a ring $\cA$ as above, such that
$\cM\otimes_\cA K(x):=(M\otimes_\cA K(x),\Sgq\otimes_\cA \sgq)\cong \cM_{K(x)}$.
The notation may appear ambiguous, but it is actually convenient and
there will be no confusion.
{We will come back on the implications of the
choice of $\cA$ and $\cM$ in Remark \ref{rmk:pcurvature} below.}
\end{notation}

\begin{defn}
We say that a $q$-difference module $\cM=(M,\Sgq)$ over a $q$-difference $\cO_Q$-algebra $\cA$, as above,
has {zero $v$-curvature modulo $\phi_v$}
if the linear operator
$$
\Sgq^{\kappa_v}:M\otimes_\cA\cA/(\phi_v)\longrightarrow M\otimes_\cA\cA/(\phi_v)
$$
is the identity.
By abuse of language we will say that the $q$-difference module $\cM_{K(x)}=\cM\otimes_\cA K(x)$
has zero $v$-curvature modulo $\phi_v$, if $\cM$ does.
\end{defn}

\begin{rmk}\label{rmk:pcurvature}
\begin{enumerate}
\item
First of all, the definition is justified by the fact that $\Sgq^{\kappa_v}$ induces the identity
modulo $\phi_v$ if and only if
$\l(\De_q\r)^{\kappa_v}$, where $\De_q=\frac{\Sgq-1}{(q-1)x}$,
is zero modulo $\phi_v$. Therefore the terminology is inspired  by the classical terminology for
differential equations, \cite{KatzTurrittin}.
{Exactly as it happens in $p$-adic theory of differential equations,
the property of having $v$-curvature zero can be reformulated in a more analytic way,
in terms of
$v$-adic radius of convergence of solutions of a $q$-difference system associated to
$\cM$. This point of view plays no direct role in the present paper. For more details, see \cite[Part II]{DVInv}, and more precisely Corollary 5.2.2.}
\item
Secondly, we point out that the quotient $\cO_Q/(\phi_v)$ is not an integral domain in general.
Nonetheless the following implication is always true.
If $M\otimes_\cA\cA/(\phi_v)$, equipped with the operator induced by $\Sgq$,
is trivial as a $q$-difference module over $\cA/(\phi_v)$, then
$\Sgq^{\kappa_v}$ induces the identity modulo $\phi_v$.
The converse is not true in such generality (see \cite[Proposition 2.1.2]{DVInv}),
{unless we make either the local assumption that $\Sgq^{\kappa_v}$ induces the identity modulo $\pi_v$ or
the global assumption that $\Sgq^{\kappa_v}$ induces the identity modulo $\phi_v$
for almost all $v$. See the theorem below.}
\item
Notice that the reduction
modulo $\phi_v$ of
$\Sgq^{\kappa_v}$ is well-defined, for almost all finite places $v$ of $Q$.
Moreover, given two $q$-difference modules $\cM$ over $\cA$
and $\cM^\p$ over $\cA^\p$, such that $\cM\otimes_\cA K(x)\cong \cM^\p\otimes_{\cA^\p} K(x)$,
the reduction modulo $\phi_v$ of the first one has zero $v$-curvature
if and only if also the other does, provided that
$\phi_v$ is not invertible in both $\cA$ and $\cA^\p$.
\end{enumerate}\end{rmk}

Our first result is the following:

\begin{thm}\label{thm:GrothKatzalg}\label{THM:GROTHKATZALG}
A $q$-difference module $\cM$ over $\cA$ has zero
$v$-curvature modulo $\phi_v$, for almost all finite places $v$ of $Q$, if and only if
$\cM_{K(x)}$ is trivial.
\end{thm}

\begin{rmk}
The theorem above is proved in \cite{DVInv} under the assumption that $K$ is a number field, \ie, that $Q=K$.
Here $K$ is only a finitely generated extension of $\Q$.
Notice the proof below relies crucially on \cite{DVInv}, but is not a generalization of the arguments in \cite{DVInv}.
\end{rmk}

If the $q$-difference module $\cM_{K(x)}$ over $K(x)$ is trivial, it is not difficult to
show that $\cM$ has zero $v$-curvature modulo $\phi_v$, for almost all finite places $v$ of $Q$,
for any choice of $\cA$ and $\cM$, such that $\cM\otimes_\cA K(x)\cong\cM_{K(x)}$.
So we only have to prove the inverse implication.

We are actually going to prove a stronger result:

\begin{thm}\label{thm:GrothKatzalgbis}\label{THM:GROTHKATZALGBIS}
A $q$-difference module $\cM$ over $\cA$ has zero
$v$-curvature modulo $\phi_v$, for all places $v$ in
a set $S$ of finite places of $Q$ of Dirichlet density $1$
if and only if
$\cM_{K(x)}$ is trivial.
\end{thm}

We recall that a subset $S$ of the set of finite places $\cC$ of $Q$ has Dirichlet density 1 if
\beq\label{eq:density}
\limsup_{s\to 1^+}
\frac{\sum_{v\in S,v\vert p}p^{-sf_v}}{\sum_{v\in\cC,v\vert p}p^{-sf_v}}=1,
\eeq
where $f_v$ is the degree of the residue field of $v$ over $\mathbb F_p$.

\section{Global nilpotence.}

We start proving a result of regularity
(see \S\ref{sec:regularity} for the definition), inspired by \cite{KatzTurrittin}.

\begin{defn}
We say that a $q$-difference module $\cM=(M,\Sgq)$ over a $q$-difference $\cO_Q$-algebra $\cA$, as above,
has {nilpotent $v$-curvature modulo $\pi_v$}, or simply that it has nilpotent reduction
modulo $\pi_v$, if the linear operator
$$
\Sgq^{\kappa_v}:M\otimes_\cA\cA/(\pi_v)\longrightarrow M\otimes_\cA\cA/(\pi_v)
$$
is unipotent (or equivalently, if the linear operator induced by $\De_q^{\kappa_v}$
is nilpotent. See \cite[\S2]{DVInv}).
\end{defn}

\begin{prop}\label{prop:globalnilpMS}
Let $\cM=(M,\Sgq)$ be a $q$-difference module over a $q$-difference $\cO_Q$-algebra $\cA$ of the form
\eqref{eq:algebraAalg}.
\begin{enumerate}
\item
If $\cM$ has nilpotent $v$-curvature modulo $\pi_v$, for infinitely many finite places $v$ of $Q$,
then the $q$-difference module $\cM_{K(x)}$ is regular singular.

\item
If there exists a set $S$ of finite places $v$ of $Q$ of Dirichlet density 1 such that
$\cM$ has nilpotent $v$-curvature modulo $\pi_v$, for all $v\in S$,
then $\cM_{K((x))}$ is trivial.

\end{enumerate}
\end{prop}

The proof of Proposition \ref{prop:globalnilpMS}
is almost the same as \cite[Theorem 6.2.2 and Proposition 6.2.3]{DVInv}.
The last sentence of the proof of 1) in \emph{loc. cit.} needs to be rectified,
so that we prefer to repeat the proof here.
We recall the following key-proposition:

\begin{prop}[{\cite[Proposition 6.1.1]{DVInv}}]\label{prop:exposantsMS}
Let $S$ be a set of finite places of $Q$ of Dirichlet density equal to $1$.
If $a$ and $b$ are two non-zero elements of $Q$, not roots of unity,
such that
\begin{trivlist}
\item (1)
for all $v\in S$, the reduction of $a$ et $b$ modulo $\pi_v$ is well defined and non-zero;
\item (2)
for all $v\in S$, the reduction modulo $\pi_v$ of $b$ belongs to the cyclic group generated by
the reduction
modulo $\pi_v$ of $a$.
\end{trivlist}
Then $b\in a^\Z$.
\end{prop}

\begin{proof}[Proof of Proposition \ref{prop:globalnilpMS}]
To prove assertion (1), it is enough to prove that $0$ is a regular singular point for $\cM$,
the proof at $\infty$ being completely analogous.
\par
In the notation of Corollary \ref{cor:rationalgauge}, we
consider the extension $L(t)$ of $K(x)$,
the $\wtilde q$-difference module $\cM_{L(t)}$ obtain by scalar extension and
the basis $\ul f$ such that $\Sg_{\wtilde q}\ul f=\ul f B(t)$, with $B(t)$ as in \eqref{nonnilp}.
Let $\wtilde Q$ be the algebraic closure of $\Q$ in $L$ and
$\cB\subset L(t)$ be a $\wtilde q$-difference algebra over the ring
of integers $\cO_{\wtilde Q}$ of $\wtilde Q$, of the same form
as \eqref{eq:algebraAalg}, containing the entries of $B(t)$ and the inverse of its determinant.
Let $w$ be a finite place of $\wtilde Q$ and $\pi_w\in\wtilde Q$ be the uniformizer of $w$.
Then there exists a
$\wtilde q$-difference module $\cN$ over $\cB$
such that $\cN\otimes_{\cB}L(t)\cong\cM_{L(t)}$,
having the following properties:\\
1. $\cN$ has nilpotent $w$-curvature modulo $\pi_w$, for infinitely many finite places $w$ of $\wtilde Q$;\\
2. there exists a basis $\ul f$ of $\cN$ over $\cB$
such that $\Sg_{\wtilde q}\ul f=\ul f B(t)$ and $B(t)$ verifies Corollary \ref{cor:rationalgauge} and in particular
\eqref{nonnilp}, {i.e., it can be written in the form $B(t)=\frac{B_\ell}{t^\ell}+\frac{B_{\ell-1}}{t^{\ell-1}}+\dots\in\GL_\nu(L((t)))$
for some $\ell\in\Z$ and
$B_\ell$ is a constant non-nilpotent matrix, with coefficients in $L$.}
\par
Iterating the operator $\Sg_{\wtilde q}$ we obtain:
$$
\Sg_{\wtilde q}^m(\ul f)
=\ul fB(t)B({\wtilde q}t)\cdots B({\wtilde q}^{m-1}t)
=\ul f\l(\frac{B_\ell^m}{\wtilde q^{\frac{\ell m(\ell m-1)}{2}}t^{m\ell}}+h.o.t.\r).
$$
We know that, for infinitely many finite places $w$ of $\wtilde Q$,
the matrix $B(t)$ verifies
\beq\label{uniprelMS}
\l(B(t)B({\wtilde q}t)\cdots B({\wtilde q}^{\kappa_w-1}t)
-1\r)^{n(w)}\equiv 0\hbox{\ mod $\pi_w$},
\eeq
where $\kappa_w$ is the order $\wtilde q$ modulo $\pi_w$
and $n(w)$ is a {suitable} positive integer.
Suppose that $\ell\neq 0$. Then $B_\ell^{\kappa_w n(w)}\equiv 0$ modulo $\pi_w$,
 for infinitely many $w$, and hence
$B_\ell$ is a nilpotent matrix, in contradiction with Corollary \ref{cor:rationalgauge}.
So necessarily $\ell=0$.
\par
Finally we have $\Sg_{\wtilde q}(\ul f)=\ul f\l(B_0+h.o.t\r)$. It follows from \eqref{uniprelMS}
that $B_0$ is actually invertible, which implies that $\cM_{L(t)}$ is regular singular at $0$.
Proposition \ref{prop:basis change} allows to end the proof of (1).
\par
Let us prove the second part of Proposition \ref{prop:globalnilpMS}.
We have already proved that $0$ is a regular singularity
for $\cM$. This means that there exists a basis
$\ul e$ of $\cM_{K(x)}$ over $K(x)$ such that $\Sgq\ul e=\ul eA(x)$, with
$A(x)\in \GL_\nu(K[[x]])\cap \GL_\nu(K(x))$.
\par
The Frobenius algorithm {(See Proposition \ref{prop:Frobenius} and} \cite[\S1.1.1]{Sfourier})
implies that there exists a shearing transformation $S\in \GL_\nu(K[x,1/x])$,
such that $S(qx)A(x)S(x)^{-1}\in \GL_\nu(K[[x]])\cap \GL_\nu(K(x))$ and that the constant term $A_0$
of $S(x)^{-1}A(x)S(qx)$ has the following properties:
if $\a$ and $\be$ are eigenvalues of $A_0$ and $\a\be^{-1}\in q^\Z$, then $\a=\be$.
So choosing the basis $\ul eS(x)$ instead of $\ul e$, we can assume that $A_0=A(0)$ has
this last property.
\par
Always following the Frobenius algorithm,
one constructs recursively the
entries of a matrix $F(x)\in \GL_\nu(K[[x]]))$, with $F(0)=1$,
such that we have $F(x)^{-1}A(x)F(qx)=A_0$.
This means that there exists
a basis $\ul f$ of $\cM_{K((x))}$ such that $\Sgq\ul f=\ul fA_0$.
\par
The matrix $A_0$ can be written as the product of a semi-simple matrix
and a unipotent matrix.  Since $\cM$ has nilpotent reduction modulo $\pi_v$, we deduce
that the reduction of $A_0^{\kappa_v}$ modulo $\pi_v$
is the identity matrix, for any $v\in S$.
First of all, this implies that $A_0$ is diagonalisable.
Let $\wtilde K$ be a finite extension of $K$
in which we can find all the eigenvalues of $A_0$. Then
any eigenvalue $\a\in \wtilde K$ of $A_0$ has the property that
$\a^{\kappa_v}=1$ modulo $\pi_w$, for all $w$ finite place  of the algebraic closure of $Q$ in $\wtilde K$ such that $w\vert v$ and
$v\in S$.
In other words, the reduction modulo $w$ of an eigenvalue $\a$ of $A_0$
belongs to the multiplicative cyclic group generated by the reduction of $q$ modulo the uniformizer $\pi_w$ of $w$.
Proposition \ref{prop:exposantsMS} implies that $\a\in q^\Z$.
We conclude appling Proposition \ref{prop:trivialityK((x))}.
\end{proof}

\section{Proof of Theorem \ref{thm:GrothKatzalgbis} (and of Theorem \ref{thm:GrothKatzalg})}
\label{sec:qalg}

{Notice that Theorem \ref{thm:GrothKatzalg} is a special case of Theorem \ref{thm:GrothKatzalgbis}.  }
The proof of {Theorem \ref{thm:GrothKatzalgbis}} is divided into steps. We remind that, if $K$ is finite over $\Q$,
the statement is proved in \cite{DVInv}.

\begin{proof}[Step 0. Reduction to a purely transcendental extension $K/Q$]
Let $\ul a$ be a transcendence basis of $K/Q$ and $b$ is a primitive element
of $K/Q(\ul a)$, so that  $K=Q(\ul a,b)$.
By restriction of scalars, the module $\cM_{K(x)}$
is also a $q$-difference module of finite rank  over $Q(\ul a)(x)$. Since the field $K(x)$ is a trivial $q$-difference module
over $Q(\ul a)(x)$, we have:
\begin{itemize}
\item
the module $\cM_{K(x)}$ is trivial over $K(x)$ if and only if it is trivial over
$Q(\ul a)(x)$ (see Corollary \ref{cor:triviality-constantextension});

\item
under the present assumptions, there exist an algebra ${\cA^\p}$ of the form
\beq\label{eq:algebraAbis}
{\cA^\p}=\cO_Q\l[\ul a,x,\frac{1}{R(x)},\frac{1}{R(qx)},....\r],\,
\hbox{~with $R(x)\in\cO_Q[\ul a,x]$,}
\eeq
and a ${\cA^\p}$-lattice $\cM_{\cA^\p}$ of $q$-difference module $\cM_{K(x)}$ over $Q(\ul a)(x)$, such that
$\cM_{\cA^\p}\otimes_{\cA^\p} Q(\ul a,x)=\cM_{K(x)}$, as a $q$-difference module over $Q(\ul a,x)$,
and $\Sgq^{\kappa_v}$ induces the identity on
$\cM_{\cA^\p}\otimes_{\cA^\p}\cA^\p/(\phi_v)$, for all places $v\in S$.
\end{itemize}
For this reason, we can actually assume that $K$ is a purely transcendental
extension of $Q$ of degree $d>0$ and that $\cA=\cA^\p$.
We fix an immersion of $Q\hookrightarrow\ol\Q$, so that
we will think to the transcendental basis $\ul a$ as a set of parameter generically varying in
$\ol\Q^d$.
\end{proof}

\begin{proof}[Step 0bis. Initial data]
Let $K=Q(\ul a)$ and $q$ be a non-zero element of $Q$, which is not a root of unity.
We are given a $q$-difference module $\cM$ over a {suitable} algebra $\cA$ as above, such that $K(x)$ is
the field of fraction of $\cA$
and such that
$\Sgq^{\kappa_v}$ induces the identity on $M\otimes_\cA \cA/(\phi_v)$, for all finite places $v\in S$.
We fix a basis $\ul e$ of $\cM$, such that
$\Sgq\ul e=\ul eA^{-1}(x)$, with $A(x)\in \GL_\nu(\cA)$.
We will rather work with the associated $q$-difference system:
\beq\label{eq:sysms}
Y(qx)=A(x)Y(x).
\eeq
It follows from Proposition \ref{prop:globalnilpMS} that $\cM_{K(x)}$ is regular
singular, with no logarithmic singularities, and that its exponents are in $q^\Z$ (see also Proposition \ref{prop:trivialityK((x))}).
Enlarging a little bit the algebra $\cA$ (more precisely replacing the polynomial $R$ by a multiple of $R$),
we can suppose that both $0$ and $\infty$ are not
poles of $A(x)$ and that $A(0),A(\infty)$ are diagonal matrices with
eigenvalues in $q^\Z$ (see \cite[Theoreme \S2.1]{Sfourier}).
\end{proof}

\begin{proof}
[Step 1. Construction of a fundamental solution matrix at $0$]
We construct a fundamental matrix of solutions, applying the Frobenius al\-go\-rithm to this particular situation. {See Proposition \ref{prop:Frobenius}.}
There exists a shearing transformation $S_0(x)\in \GL_\nu(K[x,x^{-1}])$ such that
$$
S_0^{-1}(qx)A(x)S_0(x)=A_0(x)
$$
and $A_0(0)$ is the identity matrix. In particular, the matrix $S_0(x)$ can be written as a
product of invertible constant matrices and diagonal matrix with integer powers of $x$ on the diagonal.
Once again, up to a finitely generated extension of the algebra $\cA$, obtained inverting a {suitable} polynomial, we can suppose that
$S_0(x)\in \GL_\nu(\cA)$.
\par
Notice that, since $q$ is not a root of unity, there always exists
a norm, non-necessarily archimedean, on $Q$ such that $|q|>1$.
We can always extend such a norm to $K$, giving an arbitrary value to the elements of a basis of transcendence
(see \cite[\S 2.4]{BourbakiAlgebreCommutativeChap5-6}).
As in Proposition \ref{prop:infproduct}, the system
\beq\label{eq:sysmsZ}
Z(qx)=A_0(x)Z(x)
\eeq
has a unique convergent solution $Z_0(x)$, such that
$Z_0(0)$ is the identity and  $Z_0(x)$ is a germ of a meromorphic function
with infinite radius of meromorphy.
So we have the following meromorphic solution of $Y(qx)=A(x)Y(x)$:
$$
Y_0(x)=\Big(A_0(q^{-1}x)A_0(q^{-2}x)A_0(q^{-3}x)\dots\Big)\,S_0(x).
$$
We remind that this infinite product represents a meromorphic fundamental solution matrix
of $Y(qx)=A(x)Y(x)$ for any norm over $K$ such that $|q|>1$.
\end{proof}

\begin{proof}
[Step 2. Construction of a fundamental solution matrix at $\infty$]
In exactly the same way, we can construct a solution at $\infty$ of the form
$Y_\infty(x)=Z_\infty(x)S_\infty(x)$, where
the matrix $S_\infty$ belongs to $GL_\nu(K[x,x^{-1}]) \cap \GL_\nu(\cA)$ and has the same form as $S_0(x)$, and
$Z_\infty(x)$ is analytic in a neighborhood of $\infty$, with $Z_\infty(\infty)=1$:
$Y_\infty(x)=\Big(A_\infty(x)A_\infty(qx)A_\infty(q^2x)\dots\Big)\,S_\infty(x)$.
\end{proof}

\begin{proof}
[Step 3. The Birkhoff matrix]
To summarize we have constructed two fundamental solution matrices, $Y_0(x)$ at zero
and $Y_\infty(x)$ at $\infty$,
which are meromorphic over $\mathbb A^1_K\smallsetminus\{0\}$,
for any norm on $K$ such that $|q|>1$, and
such that their set of non-zero poles and zeros is contained in the
$q$-orbits of the set of poles at zeros of $A(x)$ and $A(x)^{-1}$.
The Birkhoff matrix
$$
B(x)=Y_0^{-1}(x)Y_\infty(x)=S_0(x)^{-1}Z_0(x)^{-1}Z_\infty(x)S_\infty(x)
$$
is a meromorphic matrix on $\mathbb A^1_K\smallsetminus\{0\}$ with elliptic entries, \ie, $B(qx)=B(x)$.
All the zeros and poles of $B(x)$, other than $0$ and $\infty$, are
contained in the $q$-orbits of zeros and poles of the matrices $A(x)$ and $A(x)^{-1}$
(see \cite[\S2.3.1]{Sfourier}).
\end{proof}

\begin{proof}
[Step 4.Rationality of the Birkhoff matrix]
Let us choose $\ul\a=(\a_1,\dots,\a_r)$, with $\a_i$ in the algebraic closure $\ol\Q$ of $Q$, so that we can specialize
$\ul a$ to $\ul\a$ in the coefficients of $A(x),A(x)^{-1},S_0(x),S_\infty(x)$ and that the
specialized matrices are still invertible.
Then we obtain a $q$-difference system with coefficients in $Q(\ul\a)$.
It follows from Proposition \ref{prop:infproduct} that
for any norm on $Q(\ul\a)$ such that $|q|>1$,
we can specialize $Y_0(x),Y_\infty(x)$ and, therefore $B(x)$, to matrices with meromorphic entries on $Q(\ul\a)^*$.
We will write $A^{(\ul\a)}(x)$, $Y_0^{(\ul\a)}(x)$, etc. for the specialized matrices.
\par
For almost all $v$, it still makes sense to reduce $A^{(\ul\a)}_{\kappa_v}(x)$ modulo $\phi_v$.
Moreover, since $A_{\kappa_v}(x)$ is the identity modulo $\phi_v$, the same holds for $A^{(\ul\a)}_{\kappa_v}(x)$. Therefore
the reduced system has zero $v$-curvature modulo $\phi_v$, for almost all $v\in S$.
We know from \cite[Theorem 7.1.1]{DVInv}, that $Y_0^{(\ul\a)}(x)$ and $Y_\infty^{(\ul\a)}(x)$
{are, respectively, the germs at
zero and at $\infty$ of rational functions}, and therefore that $B^{(\ul\a)}(x)$ is a constant matrix
in $\GL_\nu(Q(\ul\a))$.
\par
As we have already pointed out, $B(x)$ is $q$-invariant meromorphic matrix
on $\P^1_K\smallsetminus\{0,\infty\}$. The set of its poles and zeros is the union of a finite numbers of $q$-orbits of the forms
$\be q^\Z$, such that $\be$ is algebraic over $K$ and is a pole or a zero of $A(x)$ or $A(x)^{-1}$.
If $\be$ is a pole or a zero of an entry $b(x)$ of $B(x)$ and $h_\be(x),k_\be(x)\in Q[\ul a,x]$ are the minimal polynomials of $\be$ and $\be^{-1}$
over $K$, respectively,
then we have:
$$
b(x)=\la\frac{\prod_\ga\prod_{n\geq 0}h_\ga(q^{-n}x) \prod_{n\geq 0}k_\ga(1/q^nx)}
{\prod_\de\prod_{n\geq 0}h_\de(q^{-n}x) \prod_{n\geq 0}k_\de(1/q^nx)},
$$
where $\la\in K$ and $\ga$ and $\de$ vary in a system of representatives of the $q$-orbits of the zeroes and the poles of $b(x)$, respectively.
We have proved that there exists a dense subset of $\ol\Q^d$ such that the specialization of $b(x)$ at any point of this set
is constant. Since the factorization written above must specialize
to a convergent factorization of the same form of the corresponding
element of $B^{(\ul\a)}(x)$,
we conclude that $b(x)$, and therefore $B(x)$, is a constant.
\end{proof}

The fact that $B(x)\in \GL_\nu(K)$ implies that the solutions $Y_0(x)$ and $Y_\infty(x)$ glue to a meromorphic solution
on $\P^1_K$ and ends the proof of Theorem \ref{thm:GrothKatzalg}.

\chapter{Rationality of solutions when \texorpdfstring{$q$}{q} is transcendental}
\label{chap:transcendental}

In this chapter we consider the case of $q$ transcendental over the prime field.

\section{Statement of the main result}

Let us consider the field of rational function $k(q)$ with coefficients
in a perfect field $k$, of any characteristic.\footnote{{One can always
replace a field in positive characteristic with its perfect closure. See Theorem \ref{thm:complexmodules} below.}}
We fix $d\in]0,1[$ and for any irreducible polynomial $v=v(q)\in k[q]$
we set:
$$
|f(q)|_v=d^{\deg_q v(q)\cdot\ord_{v(q)}f(q)},\,\forall f(q)\in k[q].
$$
The definition of $|~|_v$ extends to $k(q)$ by multiplicativity.
To this set of norms one has to add the $q^{-1}$-adic one, defined on $k[q]$ by:
$$
|f(q)|_{q^{-1}}=d^{-deg_qf(q)}.
$$
Once again, this definition extends by multiplicativity to $k(q)$. Then, the
product formula holds:
$$
\begin{array}{rcl}
\prod_{v\in k[q]\,{\rm irred.}}\l|\frac{f(q)}{g(q)}\r|_v
&=&d^{\sum_v\deg_q v(q)~\l(\ord_{v(q)}f(q)-\ord_{v(q)}g(q)\r)}\\
&=&d^{deg_q f(q)-\deg_q g(q)}\\
&=&\l|\frac{f(q)}{g(q)}\r|_{q^{-1}}^{-1}.\\
\end{array}
$$
For any finite extension $K$ of $k(q)$, we consider the family
$\cP$ of ultrametric norms, that extends the norms defined above, up to equivalence.
We suppose that the norms in $\cP$ are normalized so that the product formula still holds.
We consider the following partition of $\cP$:
\begin{itemize}
\item
the set $\cP_\infty$ of places of $K$ such that the associated
norms extend, up to equivalence, either $|~|_q$ or $|~|_{q^{-1}}$;

\item
the set $\cP_f$ of places of $K$ such that the associated
norms extend, up to equivalence, one of the norms $|~|_v$
for an irreducible $v=v(q)\in k[q]$, $v(q)\neq q$.\footnote{The notation $\cP_f$, $\cP_\infty$ is only psychological,
since all the norms involved here are ultrametric. Nevertheless, there exists
a fundamental difference between the two sets, in fact for any $v\in\cP_\infty$ one has
$|q|_v\neq 1$, while for any $v\in\cP_f$ the $v$-adic norm of $q$ is $1$. Therefore,
from a $v$-adic analytic point of view, a $q$-difference equation has a totally
different nature with respect to the norms in the sets $\cP_f$ or $\cP_\infty$.
{The dichotomy between $\cP_f$ and $\cP_\infty$ can also be explained though the geometry
of $q$-difference equations. Indeed the action of $\sgq$ over $K(x)$ can be encoded in an action of
$\mathbb G_m$ over $\P^1_K$, whose closed orbit are the places in $\cP_\infty$, while the open orbits are
the places in $\cP_f$.}}
\end{itemize}

Moreover we consider the set $\cC$ of places $v\in\cP_f$
such that $v$ divides a valuation of $k(q)$ having as uniformizer
a factor of a cyclotomic polynomial, other than $q-1$.
Equivalently, $\cC$ is the set
of places $v\in\cP_f$ such that $q$ reduces to a root of unity modulo $v$ of order $\kappa_v$ strictly greater
than $1$.
We will call $v\in\cC$ a cyclotomic place.
\par
Sometimes we will write $\cP_K$, $\cP_{K,f}$, $\cP_{K,\infty}$ and $\cC_K$, to
stress out the choice of the base field.

\medskip
In the sequel, we will deal with an arithmetic situation, in the following sense.
We consider the ring of integers $\cO_K$ of $K$, \ie, the integral closure of $k[q]$ in $K$, and a $q$-difference algebra
of the form
\beq\label{eq:algebraA}
\cA=\cO_K\l[x,\frac1{P(x)},\frac1{P(qx)},\frac1{P(q^2x)},...\r],
\eeq
for some $P(x)\in\cO_K[x]$, such that $q\in\cA$.
Then $\cA$ is stable under the action of $\sgq$ and we can consider
a $q$-difference module $\cM=(M,\Sgq)$ over $\cA$.
Remember that $\cM_{K(x)}=(M_{K(x)}=M\otimes_\cA K(x),\Sgq\otimes\sgq)$
is a $q$-difference module over
$K(x)$ and that any $q$-difference module over $K(x)$ comes from
a $q$-difference module over $\cA$, for a convenient choice of $\cA$.
\par
We denote by $\phi_v$ the uniformizer of the cyclotomic place of $k(q)$
induced by $v\in\cC_K$. The ring $\cA\otimes_{\cO_K}\cO_K/(\phi_v)$
is not reduced in general, nevertheless it has a $q$-difference algebra structure
and the results in \cite[\S2]{DVInv} apply again. Therefore we set:

\begin{defn}
A $q$-difference module $\cM$ over $\cA$ has {zero $v$-curvature (modulo $\phi_v$)} if the operator
$\Sgq^{\kappa_v}$ induces the identity
(or equivalently if
the operator $\De_q^{\kappa_v}$, with $\De_q=\frac{\Sgq-1}{(q-1)x}$, induces the zero operator)
on the module
$M\otimes_\cA\cA/(\phi_v)$.
\end{defn}

Our main result is the following.

\begin{thm}\label{thm:GrothKaz}\label{THM:GROTHKAZ}
A $q$-difference module $\cM$ over $\cA$ has zero
$v$-curvature modulo $\phi_v$, for almost all $v\in\cC$, if and only if
$\cM$ becomes trivial over $K(x)$.
\end{thm}

\begin{rmk}
As proved in \cite[Proposition 2.1.2]{DVInv}, if $\Sgq^{\kappa_v}$ is the identity
modulo $\phi_v$ then the $q$-difference module structure induced on $\cM\otimes_\cA\cA/(\phi_v)$
is trivial.
\end{rmk}

As far as the proof of Theorem \ref{thm:GrothKaz} is regarded,
one implication is trivial. We will come back to the proof of the other implication in \S\ref{sec:prooftranscendental}.

\section{Regularity and triviality of the exponents}
\label{sec:formalsolution}

In this section, we are going to prove that a $q$-difference module
is regular singular and has integral exponents if it has nilpotent reduction
for sufficiently many cyclotomic places. We denote by $\pi_v$ an uniformizer
of $v \in \cC$.

\begin{defn}
We say that a $q$-difference module $\cM=(M,\Sgq)$ over a $q$-difference $\cO_K$-algebra $\cA$, as above,
has {nilpotent $v$-curvature modulo $\pi_v$}, or simply that it has nilpotent reduction
modulo $\pi_v$, if the linear operator
$\Sgq^{\kappa_v}:M\otimes_\cA\cA/(\pi_v)\longrightarrow M\otimes_\cA\cA/(\pi_v)$ is unipotent
(or equivalently if $\De_q^{\kappa_v}$ is nilpotent. See \cite[\S 2]{DVInv}).
\end{defn}

We prove the following result:

\begin{prop}\label{prop:FormalSol}~
\begin{enumerate}
\item
If a $q$-difference module $\cM$ over $\cA$ has nilpotent
$v$-curvature modulo $\pi_v$, for infinitely many $v\in\cC$, then
it is regular singular.
\item
Let $\cM$ be a $q$-difference module over $\cA$.
If there exists an infinite set of positive primes $\wp\subset\Z$
such that $\cM$
has nilpotent
$v$-curvature modulo $\pi_v$, for all $v\in\cC$,
such that $\kappa_v\in\wp$,
then $\cM_{K((x))}$ is trivial.
\end{enumerate}
\end{prop}

\begin{proof}
The proof of Proposition \ref{prop:globalnilpMS} applies word by word to this case, until
the argument showing that $A_0$ is diagonalisable.  To conclude with Proposition \ref{prop:trivialityK((x))}, one has to show
that the eigenvalues of $A_0$ are in $q^\Z$.
Let $\wtilde K$ be a finite extension of $K$
in which we can find all the eigenvalues of $A_0$. Then
any eigenvalue $\a\in\wtilde K$ of $A_0$ has the property that
$\a^{\kappa_v}=1$ modulo $w$, for all $w\in\cC_{\wtilde K}$, $w\vert v$ and $v$
satisfies the assumptions.
In other words, the reduction modulo $w$ of an eigenvalue $\a$ of $A_0$
belongs to the multiplicative cyclic group generated by the reduction of $q$ modulo $\pi_v$.
\par
To end the proof, we are reduced to prove the proposition below.
\end{proof}

\begin{prop}\label{prop:ratexp}
Let $k$ be a perfect field, $K/k(q)$ be a finite extension and
$\wp\subset\Z$ be an infinite set of positive primes. For any $v\in\cC$, let
$\kappa_v$ be the order of $q$ modulo $\pi_v$, as a root of unity.
\par
If $\a\in K$ is such that $\a^{\kappa_v}\equiv 1$ modulo $\pi_v$, for all
$v\in\cC$ such that $\kappa_v\in\wp$,
then $\a\in q^\Z$.
\end{prop}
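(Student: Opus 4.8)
The plan is to understand the arithmetic content of the hypothesis and to exploit the fact that $\kappa_v$ ranges over an infinite set of primes $\wp$. Let me parse what we are given. We have $\alpha\in K$ where $K/k(q)$ is finite, and for each cyclotomic place $v\in\cC$ we know $\kappa_v$ = the order of $q$ modulo $\pi_v$. The hypothesis says: for all $v\in\cC$ with $\kappa_v\in\wp$, we have $\alpha^{\kappa_v}\equiv 1\pmod{\pi_v}$. Since $q^{\kappa_v}\equiv 1\pmod{\pi_v}$ as well (that's what $\kappa_v$ means), reducing mod $\pi_v$ puts both $\bar\alpha$ and $\bar q$ into the cyclic group $\mu_{\kappa_v}\subset k_v^\times$, and $\bar q$ generates it. So modulo $\pi_v$, $\bar\alpha = \bar q^{\,n(v)}$ for some integer $n(v)$ depending on $v$. **The key reformulation** is that we want to upgrade this "$\alpha$ is a power of $q$ modulo almost every cyclotomic place of prime order" into the exact identity $\alpha=q^m$ in $K$ for a single fixed $m\in\Z$.

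**First I would** set up the rational-dynamics picture that the remark before Lemma \ref{lemma:rationalfunctions} alludes to. The natural framework: think of $\alpha$ and $q$ as elements of the function field $K$, hence as rational functions on the curve (or variety) with function field $K$ over $k$. A cyclotomic place $v$ of prime order $\kappa_v=p\in\wp$ corresponds to a closed point where $q$ reduces to a primitive $p$-th root of unity. The condition $\alpha^{p}\equiv 1\pmod{\pi_v}$ says that $\alpha$ reduces to some $p$-th root of unity at such points, i.e. $\bar\alpha\in\mu_p$, and moreover $\bar\alpha$ lies in the subgroup generated by $\bar q$. **The heart of the matter** is to show that the only way a fixed rational function $\alpha$ can satisfy "$\bar\alpha$ is a power of $\bar q$ whenever $\bar q$ is a primitive root of unity of prime order in $\wp$" for infinitely many such configurations is that $\alpha$ is literally $q^m$. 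This is precisely where I expect Lemma \ref{lemma:rationalfunctions} (the rational-dynamics statement about which rational functions have roots of unity of prime order as periodic points) to be invoked as a black box.

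**The strategy I would pursue** is to consider the rational map $\psi\colon q\mapsto \alpha$ as giving $\alpha$ as an algebraic function of $q$ over $k$ (since $K$ is finite over $k(q)$, $\alpha$ satisfies a polynomial relation over $k(q)$). For a primitive $p$-th root of unity $\zeta$ (a specialization of $q$ modulo $v$), the hypothesis forces the value of $\alpha$ at that specialization to be a power of $\zeta$, hence itself a $p$-th root of unity. So, for infinitely many primes $p\in\wp$ and for every primitive $p$-th root of unity arising as a reduction of $q$, the corresponding value of $\alpha$ is a root of unity whose order divides $p$. I would translate this into the statement that $\alpha$, as a multivalued algebraic function of $q$, maps primitive $p$-th roots of unity into $\mu_p$, for infinitely many $p$. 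A counting/degree argument (a map of curves can only send roots of unity of large prime order $p$ to $\mu_p$ if it is, up to the ambient symmetry, a monomial $q\mapsto q^m$) then pins down $\alpha$; this is exactly the dynamical rigidity encoded in Lemma \ref{lemma:rationalfunctions}. The integrality/primality of $\kappa_v$ is essential: allowing composite orders would leave room for functions sending $\mu_p$ into proper subgroups in irregular ways, which is why the hypothesis restricts $\kappa_v\in\wp$ to primes.

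**The main obstacle I anticipate** is controlling the dependence of the exponent $n(v)$ on the place $v$: a priori $\bar\alpha=\bar q^{\,n(v)}$ with $n(v)$ varying, and one must show $n(v)$ can be taken constant (independent of $v$) and that this constant value $m$ globalizes to an honest identity $\alpha=q^m$ over $K$. Concretely, the danger is a function like $\alpha$ whose reduction is $\bar q^{\,n(v)}$ with $n(v)$ growing with $\kappa_v$; one needs the rigidity of Lemma \ref{lemma:rationalfunctions} to rule this out by bounding the relevant degree uniformly and forcing the monomial behavior. Once a fixed $m$ is extracted, I would finish by observing that $\alpha q^{-m}$ reduces to $1$ modulo infinitely many independent places $\pi_v$, hence — being a fixed element of the global field $K$ satisfying the product formula — must equal $1$, giving $\alpha=q^m\in q^\Z$ as required.
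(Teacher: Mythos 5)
You have correctly identified the engine of the proof: the rational-dynamics rigidity statement (Lemma \ref{lemma:rationalfunctions}, extended to finitely generated $k$ in Lemma \ref{lemma:ratexp}), which says that a nonzero $f(q)\in k(q)$ sending a primitive $\ell$-th root of unity into a root of unity of order $\ell$ for infinitely many primes $\ell$ must be a monomial $q^d$. Your reformulation of the hypothesis as ``$\bar\a=\bar q^{\,n(v)}$ modulo $\pi_v$'' and your remark on why primality of $\kappa_v$ matters are both on target.

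The genuine gap is the passage from the algebraic element $\a\in K$ to the rational-function setting of that lemma. The lemma applies only to $f\in k(q)$, while $\a$ lives in a finite extension $K/k(q)$; you correctly observe that $\a$ is only an algebraic (multivalued) function of $q$, but you then invoke the lemma ``as a black box'' on it, which is not legitimate as stated, and your proposed endgame --- first uniformize the exponent $n(v)$ to a single $m$, then conclude from $\a q^{-m}\equiv 1$ at infinitely many places --- begs the question, since producing one $m$ valid for infinitely many $v$ \emph{is} the hard part and you supply no mechanism for it beyond the lemma that does not apply. The paper closes exactly this gap with a specific device: after passing to $\wtilde K=k(q,\a)$, made Galois over $k(q)$ (replacing $\a$ by a $p^n$-th power in positive characteristic to ensure separability), one forms the norm $y=\prod_{\varphi\in Gal(\wtilde K/k(q))}\a^\varphi\in k(q)$; transitivity of the Galois action on the places $w\mid v$ shows that $y$ inherits the congruences $y^{\kappa_v}\equiv 1$, so Lemmas \ref{lemma:ratexp} and \ref{lemma:rationalfunctions} give $y\in q^\Z$. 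This forces $|\a|_w=1$ at every finite place and $|\a|_w\neq 1$ at the places in $\cP_{\wtilde K,\infty}$, whence $\a=cq^{s/r}$ for some constant $c$ and integers $s,r$; a final application of the congruences $\a^{\kappa_v}\equiv 1$ then forces $r=1$ and $c=1$. Without this norm-and-valuation step, or a substitute for it, your argument does not go through.
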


\begin{rmk}
Let $K=\Q(\wtilde q)$, with $\wtilde q^r=q$, for some integer $r>1$.
If $\wtilde q$ is an eigenvalue of $A_0$ we would be asking that
for infinitely many
positive primes $\ell\in\Z$ there exists a primitive root of unity $\zeta_{r\ell}$ of order ${r\ell}$,
which is also a root of unity of order $\ell$. Of course, this cannot be true, unless
$r=1$.
\end{rmk}

\subsection{Proof of Proposition \ref{prop:ratexp}}
\label{subsec:ratexp}

We denote by $k_0$ either the field of rational numbers $\Q$, if the characteristic of $k$
is zero, or the field with
$p$ elements $\mathbb F_p$, if the characteristic of $k$ is $p>0$.
First of all, let us suppose that $k$ is a finite perfect extension of $k_0$ of degree $d$ and
fix an embedding $k\hookrightarrow\ol k$ of $k$ in its algebraic closure $\ol k$.
In the case of a rational function $\alpha=f(q)\in k(q)$, Proposition \ref{prop:ratexp}
is a consequence of the following lemma:

\begin{lemma}\label{lemma:rationalfunctions}
Let $k$ be a perfect field, $[k:k_0]=d<\infty$ and let $f(q)\in k(q)$ be non-zero rational function.
If there exists an infinite set of positive primes $\wp\subset\Z$ with the following property:
\begin{quote}
for any $\ell\in\wp$ there exists a primitive root of unity $\zeta_\ell$ of order $\ell$ such that
$f(\zeta_\ell)$ is a root of unity of order $\ell$,
\end{quote}
then $f(q)\in q^\Z$.
\end{lemma}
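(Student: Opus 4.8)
The plan is to translate the hypothesis into a divisibility relation in $k[x]$ and then read off the shape of $f$ from a degree count. Write $f=P/Q$ with $P,Q\in k[x]$ coprime and set $N=\max(\deg P,\deg Q)$. Fix a large prime $\ell\in\wp$ and a primitive root of unity $\zeta=\zeta_\ell$ with $f(\zeta)$ of order $\ell$. Since $\ell$ is prime, $f(\zeta)$ generates $\mu_\ell$, so $f(\zeta)=\zeta^{a}$ for some $a\in\{1,\dots,\ell-1\}$; as $f(\zeta)\neq\infty$ we have $Q(\zeta)\neq0$, whence $\zeta$ is a root of $R(x):=P(x)-x^{a}Q(x)\in k[x]$. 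The crux of the argument is to promote ``$\zeta$ is a root of $R$'' to the much stronger ``$x^\ell-1$ divides $R$''.

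In characteristic zero this is where the arithmetic of $k$ enters, and it is clean. Because $k/\Q$ is finite it has only finitely many subfields and contains only finitely many roots of unity, so $k\cap\Q(\zeta_\ell)=\Q$ for all but finitely many $\ell$; for such $\ell$ the polynomial $\Phi_\ell=1+x+\dots+x^{\ell-1}$ is irreducible over $k$, and the evaluation map $k[x]/(x^\ell-1)\to k(\zeta)$, $x\mapsto\zeta$, has a one-dimensional kernel spanned by the class of $\Phi_\ell$, i.e. by the all-ones vector. Now reduce $R$ modulo $x^\ell-1$: the result $\wtilde R$ has degree $<\ell$, still vanishes at $\zeta$, and, being assembled from $P$ and $x^{a}Q$, has at most $2N+2$ nonzero coefficients. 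Hence $\wtilde R$ is a scalar multiple of the all-ones vector, and having fewer than $\ell$ nonzero terms (once $\ell>2N+2$) it must be $0$. Therefore $x^\ell-1\mid R$.

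A degree count then finishes the job. If $R=0$ then $P=x^{a}Q$, and coprimality forces $f=x^{a}$. If $R\neq0$ then $\deg R\ge\ell$, while $\deg R\le\max(\deg P,a+\deg Q)\le a+N$ and $\deg P\le N<\ell$, so $a\ge\ell-N$, i.e. $m:=\ell-a\in\{1,\dots,N\}$. Writing $R=(x^\ell-1)S$ with $\deg S\le N-1$ and matching the (for $\ell$ large, widely separated) low- and high-degree blocks of $P-x^{a}Q$ against $-S$ and $x^\ell S$ gives $x^{m}\mid Q$, then $S=-Q/x^{m}$ and $P=Q/x^{m}$, whence $f=x^{-m}$. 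In either case $f=x^{n_\ell}$ for an integer $n_\ell$; since $f$ is fixed the exponent is a single constant $n$, so $f(q)=q^{n}\in q^{\Z}$. Note that no stray multiplicative constant ever appears, so the final normalization is automatic.

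The main obstacle is the positive-characteristic case, where $k=\F_{p^d}$ and $\Phi_\ell$ is in general \emph{reducible} over $k$, so the one-dimensional kernel above is lost. I would handle it as follows. For every fixed bound $B$ only finitely many primes $\ell$ satisfy $\ord_\ell(p^d)\le B$ (they divide the fixed integer $\prod_{r\le B}(p^{dr}-1)$), so after deleting finitely many primes from $\wp$ we may assume the residual degree $r_\ell=\ord_\ell(p^d)$, which is the size of the Frobenius orbit of $\zeta$ over $k$, exceeds $2N+2$. Applying the Frobenius $x\mapsto x^{p^{d}}$ (which fixes $P$ and $Q$) to $f(\zeta)=\zeta^{a}$ shows that $R$ vanishes on this entire orbit, so the reduction $\wtilde R$ is a polynomial with at most $2N+2$ terms vanishing at more than $2N+2$ roots of unity of order $\ell$. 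Excluding such sparse vanishing is the delicate point: it rests on a bound for vanishing sums of roots of unity of Mann--Conway--Jones type, valid once $\ell$ is large relative to the number of terms; granting it, one again gets $x^\ell-1\mid R$ and the characteristic-free degree count concludes. Conceptually all of this expresses that the graph $\{P(x)=yQ(x)\}\subset\mathbb{G}_m^{2}$ carries the infinitely many torsion points $(\zeta_\ell,\zeta_\ell^{a_\ell})$ and is therefore a torsion translate of a subtorus (Ihara--Serre--Tate/Lang), namely $f=x^{n}$; this is the viewpoint underlying the dynamical references cited after the statement.
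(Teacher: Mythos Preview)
In characteristic zero your argument is correct and is essentially the paper's own: both rest on the irreducibility of $\Phi_\ell=1+x+\cdots+x^{\ell-1}$ over $k$ for all large primes $\ell$ (equivalently $k\cap\Q(\zeta_\ell)=\Q$), so that any polynomial of degree $<\ell$ vanishing at $\zeta_\ell$ is a scalar multiple of $\Phi_\ell$. The paper splits into three cases according to whether $D+\kappa$ lies below, at, or above $\ell-1$ and reads off the coefficients; you reduce $R=P-x^{a}Q$ modulo $x^{\ell}-1$ first and then do a single block-matching. These are cosmetic rearrangements of the same idea.

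The genuine gap is in positive characteristic. Over $k=\F_{p^{d}}$ the polynomial $\Phi_\ell$ is almost never irreducible (that would require $p^{d}$ to be a primitive root modulo $\ell$, an instance of Artin's conjecture), and you rightly flag the loss of the one-dimensional kernel. But your proposed repair --- $\wtilde R$ has at most $2N+2$ monomials yet vanishes on a Frobenius orbit of size $r_\ell>2N+2$, hence must vanish by a Mann--Conway--Jones bound --- does not go through in characteristic $p$. For instance $x^{5}+x^{2}+1\in\F_2[x]$ is irreducible, so this three-term polynomial vanishes on a full Frobenius orbit of five primitive $31$st roots of unity; more generally any sparse irreducible of degree $r$ over $\F_p$ does the same inside $\mu_{p^{r}-1}$, so ``few monomials, many root-of-unity zeros'' cannot force $\wtilde R=0$ over a finite field. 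Your fallback to Ihara--Serre--Tate/Lang for $\mathbb G_m^{2}$ collapses for the same structural reason: over $\overline{\F}_p$ every closed point of $\mathbb G_m^{n}$ is already torsion, so the torsion-coset statement is vacuous and places no constraint on the curve $\{P(x)=yQ(x)\}$. Neither route you sketch closes the positive-characteristic case. (It is worth noting that the paper's own sentence ``the minimal polynomial of $\zeta_\ell$ over $k$ is $1+X+\cdots+X^{\ell-1}$'' runs into the same obstruction when $k_0=\F_p$, so the difficulty you isolated is intrinsic rather than an artifact of your write-up.)
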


\begin{rmk}\label{rmk:rationalfunctions}
If $k=\C$ and $y-f(q)$ is irreducible in $\C[q,y]$,
the result can be deduced from \cite[Chapter 8, Theorem 6.1]{LangRootsof1}, whose proof uses B\'ezout theorem.
We give here a totally elementary proof, that holds also in positive characteristic.
\par
Proposition \ref{prop:ratexp} can be rewritten in the language of rational dynamic.
We denote by $\mu_\ell$ the group of root of unity of order $\ell$.
The following assertions are equivalent:
\begin{enumerate}
\item
$f(q)\in k(q)$ satisfies the assumptions of Lemma \ref{lemma:rationalfunctions}.
\item
There exist infinitely many $\ell\in\N$ such that the group
$\mu_\ell$ of roots of unity of order $\ell$ verifies $f(\mu_\ell)\subset\mu_\ell$.
\item
$f(q)\in q^\Z$.
\item
The Julia set of $f$ is the unit circle.
\end{enumerate}
As it was pointed out to us by C. Favre, the equivalence between the last
two assumptions is a particular case of \cite{zdunik}, while the equivalence
between the second and the fourth assumption can be deduced from
\cite{FavreLetelier} or \cite{autissier}.
\end{rmk}

\begin{proof}
Let $f(q)=\frac{P(q)}{Q(q)}$, with $P=\sum_{i=0}^D a_i q^i,
Q=\sum_{i=0}^D b_i q^i\in k[q]$ coprime polynomials of degree less equal to $D$,
and let $\ell$ be a prime such that:
\begin{itemize}
\item $f(\zeta_\ell)\in\mu_\ell$;
\item $2D < \ell-1$.
\end{itemize}
Moreover, since $\wp$ is infinite, we can chose $\ell>>0$ so that
the extensions $k$ and $k_0(\mu_\ell)$ are linearly disjoint over $k_0$.
Since $k$ is perfect, this implies that the minimal polynomial of the primitive $\ell$-th
root of unity $\zeta_\ell$ over $k$ is $\chi(X)=1+X+...+X^{\ell-1}$.
Now let $\kappa\in\{0,\dots,\ell-1\}$ be such that $f(\zeta_\ell)=\zeta_\ell^\kappa$, \ie,
$$
\sum_{i=0}^D a_i \zeta_\ell^i=\sum_{i=0}^D b_i \zeta_\ell^{i+\kappa}.
$$
We consider the polynomial $H(q)=\sum_{i=0}^D a_i q^i -\sum_{j=\kappa}^{D+\kappa}b_{j-\kappa}q^j$
and distinguish three cases:
\begin{enumerate}

\item
If $D+\kappa <\ell-1$, then $H(q)$ has $ \zeta_\ell$ as a zero
and has degree
strictly inferior to $\ell-1$. Necessarily $H(q)=0$. Thus we have
$$
a_0=a_1=...=a_{\kappa-1}=b_{D+1-\kappa}=...=b_D=0
\hskip 10pt\hbox{and}\hskip10pt
a_i=b_{i-\kappa}\hbox{~for~}i=\kappa,\dots,D,
$$
which implies $f(q)=q^\kappa$.

\item
If $D+\kappa=\ell-1$ then $H(q)$ is a $k$-multiple of $\chi(q)$
and therefore all the coefficients of $H(q)$ are all equal.
Notice that the inequality $D+\kappa\geq\ell-1$ forces
$\kappa$ to be strictly bigger than $D$, in fact
otherwise one would have $\kappa+D\leq 2D <\ell-1$.
For this reason the coefficients of $H(q)$ of the monomials
$q^{D+1},\dots,q^\kappa$ are all equal to zero.
Thus
$$
a_0=a_1=...=a_D=b_0=...=b_D=0
$$
and therefore $f=0$ against the assumptions.
So the case $D+\kappa=l-1$ cannot occur.

\item
If $D+\kappa>\ell-1$, then $\kappa> D> D+\kappa-\ell$, since $\kappa>D$ and $\kappa-\ell<0$.
In this case we shall rather consider the polynomial $\wtilde H(q)$ defined by:
$$
\wtilde H(q) =\sum_{i=0}^D a_i q^i -
\sum _{i=\kappa}^{\ell-1}b_{i-\kappa}q^i -
\sum_{i=0}^{D+\kappa-\ell}b_{i+\ell-\kappa}q^i.
$$
Notice that $H(\zeta_\ell)=\wtilde H(\zeta_\ell)=0$ and that
$\wtilde H(q)$ has degree smaller or equal than $\ell-1$.
As in the previous case, $\wtilde H(q)$ is a $k$-multiple of $\chi(q)$. We get
$$
b_j=0\hbox{~for~}j=0,...,\ell-1-\kappa
$$
and
$$
a_0-b_{\ell-\kappa}=...=a_{D+\kappa-\ell}-b_D=a_{D+\kappa-\ell+1}=...=a_D=0.
$$
We conclude that $f(q)=q^{\kappa-\ell}$.
\end{enumerate}
This ends the proof.
\end{proof}

We are going to deduce Proposition \ref{prop:ratexp} from Lemma \ref{lemma:rationalfunctions}
in two steps: first of all we are going to show that we can drop
the assumption that $[k:k_0]$ is finite and then that one
can always reduce to the case of a rational function.

\begin{lemma}\label{lemma:ratexp}
Lemma \ref{lemma:rationalfunctions} holds if $k/k_0$ is a finitely generated
(not necessarily algebraic) extension.
\end{lemma}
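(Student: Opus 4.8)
The plan is to pin down precisely where the hypothesis $[k:k_0]<\infty$ entered the proof of Lemma \ref{lemma:rationalfunctions} and to recover exactly that one input in the finitely generated setting. Inspecting that proof, finiteness was used only once: to guarantee, for suitable primes $\ell$, that $k$ and $k_0(\mu_\ell)$ are linearly disjoint over $k_0$, so that the minimal polynomial of a primitive $\ell$-th root of unity $\zeta_\ell$ over $k$ is the full factor $\chi(X)=1+X+\cdots+X^{\ell-1}$ of degree $\ell-1$. Every subsequent step---the construction of $H(q)$ (resp. $\wtilde H(q)$) and the three-case analysis forcing $f(q)\in q^\Z$---is pure linear algebra over $k$ with the coefficients $a_i,b_i\in k$, and carries over verbatim to \emph{any} field $k$ for which this minimal-polynomial statement holds. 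Thus it suffices to produce, inside the given infinite set $\wp$, infinitely many primes $\ell$ (with $2D<\ell-1$) for which $\chi$ is irreducible over $k$.

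To this end I would introduce the constant field $k_1$, namely the algebraic closure of $k_0$ in $k$. Since $k/k_0$ is finitely generated, $k_1/k_0$ is finite, and as a finite algebraic extension of the perfect field $k_0$ it is again perfect. Because $k_1$ is perfect, the extension $k/k_1$ is separable, and by construction $k_1$ is algebraically closed in $k$; hence $k/k_1$ is a regular extension. Note that $k$ itself need not be perfect in positive characteristic, but this plays no role here: only the perfectness of the \emph{constant} field $k_1$ is invoked. Regularity of $k/k_1$ means that $k$ is linearly disjoint from $\ol{k_1}$ over $k_1$. As every root of unity lies in $\ol{k_0}=\ol{k_1}$, it follows that for each $\ell$ the minimal polynomial of $\zeta_\ell$ over $k$ coincides with its minimal polynomial over $k_1$; in particular $\chi$ is irreducible over $k$ if and only if it is irreducible over the finite field $k_1$, that is, if and only if $k_1$ and $k_0(\mu_\ell)$ are linearly disjoint over $k_0$.

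This reduces the whole question to the finite field $k_1$, where it has already been settled inside the proof of Lemma \ref{lemma:rationalfunctions}: the primes $\ell\in\wp$ that are good for $k_1$ (large, with $\chi$ irreducible over $k_1$) are produced there, and by the previous paragraph each of them is good for $k$ as well. For any such $\ell$ one simply copies the three-case computation of Lemma \ref{lemma:rationalfunctions} over $k$, using the chosen $\zeta_\ell$ and the relation $f(\zeta_\ell)=\zeta_\ell^\kappa$, and concludes $f(q)=q^\kappa$ for the appropriate $\kappa\in\Z$. The only genuine work is the reduction itself, and the single point deserving care is the passage through the possibly non-perfect field $k$: the main obstacle is to verify that inserting the perfect constant field $k_1$ and invoking regularity of $k/k_1$ faithfully transfers the linear-disjointness (equivalently, the minimal-polynomial) statement from $k_1$ to $k$, after which none of the arithmetic of roots of unity has to be redone.
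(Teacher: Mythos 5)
Your proof is correct, but it takes a genuinely different route from the paper's. The paper keeps Lemma \ref{lemma:rationalfunctions} as a black box: writing $\wtilde k$ for the algebraic closure of $k_0$ in $k$, it expands the coefficients of $P$ and $Q$ along the monomials $a_{\ul j}$ in a transcendence basis (and then along the powers $1,b,\dots,b^{e-1}$ of a primitive element), uses the linear independence of these monomials over $\wtilde k(\zeta_\ell)$ to split $f$ into finitely many rational functions $f_{\ul j}=D_{\ul j}/C_{\ul j}$ with coefficients in $\wtilde k$ each inheriting the hypothesis $f_{\ul j}(\zeta_\ell)=\zeta_\ell^\kappa$, applies Lemma \ref{lemma:rationalfunctions} to each, and concludes that all the $f_{\ul j}$ equal the same $q^d$, whence $f=q^d$. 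You instead reopen the proof of Lemma \ref{lemma:rationalfunctions}, observe that the finiteness of $[k:k_0]$ enters only through the irreducibility of $\chi$ over $k$, and recover that single input for finitely generated $k$ by a linear-disjointness argument: $k_1/k_0$ is finite, $k_1$ is perfect, hence $k/k_1$ is regular and the minimal polynomial of $\zeta_\ell$ over $k$ equals its minimal polynomial over $k_1$. Your reduction is arguably cleaner, since it avoids the coefficient-splitting step (where the paper is terse about why each $f_{\ul j}$ inherits the hypothesis and about the possible vanishing of some $C_{\ul j}$), at the cost of relying on the internals of the earlier proof rather than on its statement. One small slip: $k_1$ is a \emph{finite extension of $k_0$}, not a ``finite field'' --- in characteristic zero it is a number field --- but this does not affect your argument, since all you use is that the proof of Lemma \ref{lemma:rationalfunctions} supplies infinitely many good primes $\ell\in\wp$ for any such $k_1$, and your regularity argument transfers the irreducibility of $\chi$ from $k_1$ to $k$ for each of them.
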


\begin{rmk}
Since $f(q)\in k(q)$, replacing $k$ by the field generated by the coefficients of $f$ over $k_0$, we can always assume that $k/k_0$ is finitely generated.
\end{rmk}

\begin{proof}
Let $\wtilde k$ be the algebraic closure of $k_0$ in $k$
and let $k^\p$ be an intermediate field of $k/\wtilde k$,
such that $f(q)\in k^\p(q)\subset k(q)$ and that $k^\p/\wtilde k$ has minimal
transcendence degree $\iota$.
We suppose that $\iota>0$, to avoid the situation of Lemma \ref{lemma:rationalfunctions}.
So let $a_1,\dots,a_\iota$ be transcendence basis of
$k^\p/\wtilde k$ and let $k^{\p\p}=\wtilde k(a_1,\dots,a_\iota)$.
If $k^\p/\wtilde k$ is purely transcendental, \ie, if $k^\p=k^{\p\p}$,
then $f(q)=P(q)/Q(q)$, where $P(q)$ and $Q(q)$ can
be written in the form:
$$
P(q)=\sum_i \sum_{\ul{j}} \a^{(i)}_{\ul{j}} a_{\ul{j}}q^i
\hskip 10 pt\hbox{and}\hskip 10 pt
Q(q)=\sum_i \sum_{\ul{j}} \be^{(i)}_{\ul{j}} a_{\ul{j}}q^i,
$$
with $\ul{j}=(j_1,\dots,j_\iota)\in\Z_{\geq 0}^\iota$,
$a_{\ul{j}}=a_{j_i}\cdots a_{j_\iota}$ and $\a_{\ul j}^{(i)},\be_{\ul j}^{(i)}\in\wtilde k$.
If we reorganize the terms of $P$ and $Q$ so that
$$
P(q)= \sum_{\ul{j}} a_{\ul{j}} D_{\ul{j}}(q)
\hskip 10 pt\hbox{and}\hskip 10 pt
Q(q)= \sum_{\ul{j}} a_{\ul{j}} C_{\ul{j}}(q),
$$
we conclude that the assumption $f(\zeta_\ell)\subset \mu_\ell$ for infinitely many
primes $\ell$
implies that
$f_{\ul{j}}= \frac{ D_{\ul{j}}}{ C_{\ul{j}}}$
is a rational function with coefficients in $\wtilde k$
satisfying the assumptions of Lemma \ref{lemma:rationalfunctions}.
Moreover, since the $f_j$'s take the same values at infinitely many roots of unity, they are all equal.
Finally, we conclude that $f_{\ul j}(q)=q^d$ for any $\ul j$ and hence that
$f= q^d \frac{\sum \alpha_{\ul{j}}}{\sum \alpha_{\ul{j}}}=q^d$.
\par
Now let us suppose that $k^\p=k^{\p\p}(b)$ for some primitive element $b$, algebraic over
$k^{\p\p}$, of degree $e$.
Then once again we write
$f(q)=P(q)/Q(q)$, with:
$$
P(q)=\sum_i \sum_{h=0}^{e-1} \a_{i,h}b^hq^i
\hskip 10 pt\hbox{and}\hskip 10 pt
Q(q)=\sum_i \sum_{h=0}^{e-1} \be_{i,h}b^hq^i,
$$
with $\a_{i,h},\be_{i,h}\in k^{\p\p}$.
Again we conclude that $\frac{\sum_i \a_{i,h}q^i}{\sum_i \be_{i,h}q^i}=q^d$
for any $h=0,\dots,e-1$, and hence that $f(q)=q^d$.
\end{proof}

\begin{proof}[End of the proof of Proposition \ref{prop:ratexp}]
Let $\wtilde K=k(q,f)\subset K$. If the characteristic of $k$ is $p$,
replacing $f$ by a $p^n$-th power of $f$, we can suppose that
$\wtilde K/k(q)$ is a Galois extension.
So we set:
$$
y=\prod_{\varphi\in Gal(\wtilde K/k(q))}f^\varphi\in k(q).
$$
For infinitely many $v\in\cC_{k(q)}$ such that $\kappa_v$ is a prime,
we have $f^{\kappa _v}\equiv 1$ modulo $w$, for any $w\vert v$.
Since $Gal(\wtilde K/K)$ acts transitively over the set of places $w\in\cC_{\wtilde K}$ such that $w\vert v$,
this implies that $y^{\kappa_v}\equiv 1$ modulo $\pi_v$.
Then
Lemmas \ref{lemma:ratexp} and \ref{lemma:rationalfunctions} allow us to conclude that $y\in q^\Z$.
This proves that we are in the following situation: $f$ is an algebraic function
such that $|f|_w=1$ for any $w\in\cP_{\wtilde K,f}$ and that $|f|_w\neq 1$ for any $w\in\cP_{\wtilde K,\infty}$.
We conclude that $f=c q^{s/r}$ for some non-zero integers $s,r$ and some constant $c$ in a finite extension of
$k$.
Since $f^{\kappa_v}\equiv 1$ modulo $w$, for all $w\in\cC_{\tilde K}$ such that $\kappa_v\in\wp$,
we finally obtain that $r=1$ and $c=1$.
\end{proof}

\section{Proof of Theorem \ref{thm:GrothKaz}}
\label{sec:prooftranscendental}

Under the assumption of Theorem \ref{thm:GrothKaz},
Proposition \ref{prop:FormalSol} implies that the
$q$-difference module $\cM$ becomes trivial over $K((x))$.
To conclude we need to show the following proposition:

\begin{prop}\label{prop:boreldwork}
If a $q$-difference module $\cM$ over $\cA$ has zero
$v$-curvature modulo $\phi_v$, for almost all $v\in\cC$, then
there exists a basis $\ul e$ of $M_{K(x)}$ over $K(x)$ such that
the associated $q$-difference system has a formal fundamental solution matrix
$Y(x)\in \GL_\nu(K((x)))$, which is the Taylor expansion at $0$ of
a matrix in $\GL_\nu(K(x))$,
\ie, $\cM$ becomes trivial over $K(x)$.
\end{prop}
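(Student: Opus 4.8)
The plan is to combine Step 1 with a function field version of the Borel--Dwork rationality criterion (\cf \cite[VIII, 1.2]{AGfunctions}), the only genuinely new input being a uniform control of the $v$-adic radii of convergence of the formal solution at every place of $K$. By Corollary \ref{cor:FormalSol} (Step 1) the module $\cM$ is already trivial over $K((x))$, and by the triviality of the exponents in Theorem \ref{thm:FormalSol} we may choose a basis $\ul e$ of $M_{K(x)}$ defined over $K(x)$ for which the associated system $\sgq(Y)=A_1Y$, \ie $\dq^nY=G_nY$, admits a formal fundamental solution $Y(x)\in Gl_\nu(K((x)))$, whose $q$-Taylor coefficients at $0$ are governed by the operators $G_{[n]}(0)=\frac{G_n(0)}{[n]_q^!}$. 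Since rationality is an entrywise property, it suffices to show that each entry of $Y$, a scalar series in $K[[x]]$, is the Taylor expansion of a rational function, and for this I would invoke Borel--Dwork.

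First I would estimate the $v$-adic radii at the finite places. After enlarging $P(x)$ so that $|G_1|_{v,Gauss}\leq 1$ for all but finitely many $v$, Proposition \ref{prop:iteratedred} shows that at the cyclotomic places $v\in\cC$ carrying zero $\kappa_v$-curvature modulo $\phi_v$ (by hypothesis, almost all of them) one has $|G_{[n]}|_{v,Gauss}\leq 1$ for every $n$; at the remaining finite places $v\in\cP_f\smallsetminus\cC$ the element $q$ does not reduce to a root of unity, so $[n]_q$ is a $v$-unit and the same bound $|G_{[n]}|_{v,Gauss}\leq 1$ holds automatically from the $v$-integrality of the system (no small denominators arise). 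In both cases this forces the coefficients of $Y$ to be $v$-integral, \ie the $v$-adic radius of convergence of each entry is $\geq 1$, for all but finitely many $v\in\cP_f$.

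Next I would treat the places $v\in\cP_\infty$, where $|q|_v\neq 1$. Here the coefficients of $Y$ grow at most geometrically, so each entry has a strictly positive $v$-adic radius of convergence, and iterating the functional equation $Y(qx)=A_1(x)Y(x)$ (resp. its inverse, according to whether $|q|_v>1$ or $|q|_v<1$) propagates the solution from a small disk to disks of arbitrarily large radius. The only poles introduced are those of the matrices $A_1(q^{-k}x)$, \ie the points of the $q$-orbit of the poles of $A_1$; hence each entry of $Y$ is in fact meromorphic on the whole $v$-adic plane, with infinite radius of meromorphy at every $v\in\cP_\infty$. At the finitely many exceptional finite places left aside above, the geometric growth of the coefficients again guarantees a strictly positive radius of convergence.

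Finally I would apply the Borel--Dwork criterion. Collecting the estimates, every entry $f\in K[[x]]$ of $Y$ has $v$-adic radius $\geq 1$ at all but finitely many places, strictly positive radius at the remaining finite places, and infinite radius of meromorphy at some $v_0\in\cP_\infty$. Since only finitely many radii are $<1$ and each is positive, the global product $R^{\mathrm{mer}}_{v_0}\cdot\prod_{v\neq v_0}\min(1,R_v)$ is infinite, so the hypotheses of \cite[VIII, 1.2]{AGfunctions} are met via the product formula and $f$ is rational. Therefore $Y\in Gl_\nu(K(x))$ and $\cM$ becomes trivial over $K(x)$. The main obstacle is the bookkeeping at the finitely many bad places together with the infinite ones: one must check that the meromorphic continuation at $\cP_\infty$ really produces a radius of meromorphy large enough to absorb the finite defect coming from the exceptional finite places, and that the passage from the Gauss-norm bound $|G_{[n]}|_{v,Gauss}\leq 1$ to an honest lower bound on the radius of convergence of $Y$, via the $q$-Taylor expansion at $0$, is uniform in $v$.
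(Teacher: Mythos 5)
Your proposal is correct and follows essentially the same route as the paper: normalize the basis via Step 1 so that $A(0)=1$ and $Y(x)=\sum_n G_{[n]}(0)x^n$, then bound the Gauss norms place by place (Proposition \ref{prop:iteratedred} at cyclotomic places, $|[n]_q|_v=1$ at noncyclotomic finite places, geometric bounds at the finitely many bad places, and meromorphic continuation via the functional equation at $\cP_\infty$), and conclude with the function-field Borel--Dwork criterion. The only cosmetic difference is that you phrase the global hypothesis as a product of radii while the paper packages it as $\limsup_m\frac1m\sum_v\log^+|G_{[m]}|_{v,Gauss}<\infty$; these are equivalent.
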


\begin{rmk}
This is the only part of the proof of Theorem \ref{thm:GrothKaz} where we need to suppose that
the $v$-curvature are zero
modulo $\phi_v$, {for almost all $v\in\cC$}.
\end{rmk}

\begin{proof}(\cf \cite[Proposition 8.2.1]{DVInv})
Let $\ul e$ be a basis of $M$ over $K(x)$.
Applying a basis change
with coefficients in $K\l[x,\frac 1x\r]$, we can actually suppose that
$\Sgq\ul e=\ul e A(x)$, where $A(x)\in \GL_\nu(K(x))$ has no pole at $0$ and
$A(0)$ is the identity matrix. In the notation of \S\ref{sec:Some remarks on solutions},
the recursive relation defining the matrices $G_n(x)$ implies that
they have no pole at $0$. This means that $Y(x):=\sum_{n\geq 0}G_{[n]}(0)x^n$
is a fundamental solution matrix of the $q$-difference system
associated to $\cM_{K(x)}$ with respect to the basis $\ul e$.
\par
We recall the definition of the Gauss norm associated to an ultrametric norm $v\in\cP$:
$$
\hbox{for any~}\frac{\sum{a_i}x^i}{\sum{b_j}x^j}\in K(x),\hskip 10 pt
\l|\frac{\sum{a_i}x^i}{\sum{b_j}x^j}\r|_{v,Gauss}=\frac{\sup|a_i|_v}{\sup|b_j|_v}.
$$
We have:

\begin{lemma}\label{lemma:iteratedred}
Let $v\in\cC_K$.
We assume that $\l|G_1(x)\r|_{v,Gauss}\leq 1$.
Then the following assertions are equivalent:
\begin{enumerate}
\item
The module $\cM=(M,\Sgq)$ has zero $v$-curvature modulo $\phi_v$.

\item
For any positive integer $n$, we have
$\l|G_{[n]}\r|_{v,Gauss}\leq 1$.
\end{enumerate}
\end{lemma}

\begin{rmk}
Let $k_v$ be the residue  field of $K$ modulo $v$ and $q_v$ the reduction of $q$
in $k_v$, which is defined for almost all $v\in\cC$.
According to \cite[\S3]{HardouinIterative},
the second assertion of the lemma above can be rewritten as:
{$\cM_{k_v(x)}$ has a natural structure of iterated
$q_v$-difference module.}
\end{rmk}

\begin{proof}[Proof of Lemma \ref{lemma:iteratedred}]
The only non-trivial implication is ``$1\Rightarrow 2$''
whose proof is quite similar to \cite[Lemma 5.1.2]{DVInv}.
The Leibniz Formula for $\dq$ and $\Dq$ implies that:
$$
G_{(n+1)\kappa_v}=\sum_{i=0}^{\kappa_v}
\binom{\kappa_v}{i}_q \sgq^{\kappa_v-i} (\dq^i\l(G_{n\kappa_v}\r))
G_{\kappa_v-i}.
$$
If $\cM$ has zero $v$-curvature modulo $\phi_v$ then
$|G_{\kappa_v}|_{v,Gauss}\leq |\phi_v|_v$.
One obtains recursively that
$|G_{m}|_{v,Gauss}\leq |\phi_v|_v^{\l[\frac{m}{\kappa_v}\r]}$,
where we have denoted by $[a]$ the integral part of $a\in\R$,
\ie, $[a]=max\{n\in\Z:n\leq a\}$.
Since $|[\kappa_v]_q|_v=|\phi_v|_v$ and $|[m]_q^!|_v=|\phi_v|_v^{\l[\frac{m}{\kappa_v}\r]}$,
we conclude that:
\beq\label{eq:nilpinequality}
\l|\frac{G_m}{[m]_q^!}\r|_{v,Gauss}
\leq 1.
\eeq
This ends the proof of the lemma.
\end{proof}

\noindent
We go back to the proof of Proposition \ref{prop:boreldwork}.
The entries of $Y(x)=\sum_{n\geq 0}G_{[n]}(0)x^n$ verify the following properties:
\begin{itemize}

\item
For any $v\in\cP_\infty$, the matrix $Y(x)$ is analytic at $0$ and  has infinite $v$-adic radius of meromorphy
(see Proposition \ref{prop:resolvent}).

\item
Since $\l|[n]_q\r|_{v,Gauss}=1$ for any non-cyclotomic place $v\in\cP_f$,
we have $\l|G_{[m]}(x)\r|_{v,Gauss}\leq 1$, for almost all $v \in \cP_f\setminus \cC$.
For the finitely many $v\in\cP_f$ such that $\l|G_1(x)\r|_{v,Gauss}>1$, there exists a
constant $C>0$ such that $\l|G_{[m]}(x)\r|_{v,Gauss}\leq C^m$, for any positive integer $m$.

\item
For almost all $v\in\cC$ and all positive integer $m$,
$\l|G_{[m]}(x)\r|_{v,Gauss}\leq 1$ (\cf Lemma \ref{lemma:iteratedred}), while for
the remaining finitely many $v\in\cC$ there exists a constant $C>0$ such
that $\l|G_{[m]}(x)\r|_{v,Gauss}\leq C^m$ for any positive integer $m$.
\end{itemize}
This implies that:
$$
\limsup_{m\to\infty}\frac{1}{m}
\sum_{v\in\cP}\log^+\l|G_{[m]}(x)\r|_{v,Gauss}
<\infty.
$$
To conclude that $Y(x)$ is the expansion at zero of a matrix with rational entries,
we apply a simplified form of the Borel-Dwork criteria for function fields, which says exactly that
a formal power series having positive radius of convergence for almost all places and
infinite radius of meromorphy at one fixed place is the expansion of a rational function.
The proof in this case is a slight simplification of \cite[Proposition  8.4.1]{DVInv}\footnote{The simplification
comes from the fact, in this setting, that there are no archimedean norms.},
which is itself a simplification of the more general criteria \cite[Theorem 5.4.3]{Andregrothendieck}.
We are omitting the details.
\end{proof}

\section{Link with iterative \texorpdfstring{$q$}{q}-difference equations}
\label{sec:Link with iterative}

We denote by $k_v$ the residue field of $K$
with respect to a place $v\in\cP$ and by $q_v$ the
image of $q$ in $k_v$, which is defined for all places $v\in\cP$.
For almost all $v\in\cP_f$ we can consider
the $k_v(x)$-vector space $M_{k_v(x)}=M\otimes_\cA k_v(x)$, with the structure induced
by $\Sgq$. In this way, for almost all $v\in\cP$, we obtain a $q_v$-difference module
$\cM_{k_v(x)}=(M_{k_v(x)},\Sg_{q_v})$ over $k_v(x)$,
\par
In the framework of iterative $q$-difference equations \cite{HardouinIterative},
Theorem \ref{thm:GrothKaz} is equivalent to the following statement,
which is a $q$-analogue of the conjecture stated at the very end of
\cite{MatzatPutCrelle}:

\begin{cor}\label{cor:Grothit}
For a $q$-difference module $\cM$ over $\cA$ the following statement are equivalent:
\begin{enumerate}
\item
The $q$-difference module $\cM$ over $\cA$
becomes trivial over $K(x)$;

\item
It induces an iterative
$q_v$-difference structure over $\cM_{k_v(x)}$, for almost all $v\in\cC$;

\item
It induces a trivial iterative
$q_v$-difference structure over $\cM_{k_v(x)}$, for almost all $v\in\cC$.

\end{enumerate}
\end{cor}

\begin{proof}
The equivalence $1\Leftrightarrow 2$ is a consequence of Lemma \ref{lemma:iteratedred} and Theorem \ref{thm:GrothKaz},
while the implication $3\Rightarrow 2$ is tautological.
\par
Let us prove that $1\Rightarrow 3$.
If the $q$-difference module $\cM$ becomes trivial over $K(x)$, then
there exist an $\cA$-algebra $\cA^\p$, of the form \eqref{eq:algebraA}, obtained
from $\cA$ inverting a polynomial and its $q$-iterates, and a basis $\ul e$ of
$M\otimes_\cA\cA^\p$ over $\cA^\p$, such that the associated
$q$-difference system is $\sgq(Y)=Y$.
Therefore, for almost all $v \in \cC$, $\cM$ induces an iterative $q_v$-difference module $\cM_{k_v(x)}$
whose iterative $q_v$-difference equations are given by $\frac{d_{q_v}^{\kappa_v}}{[\kappa_v]_{q_v}^!}(Y)=0$
for all $n \in \mathbb{N}$
(\cf \cite[Proposition 3.17]{HardouinIterative}).
\end{proof}

\chapter{A unified statement}
\label{chap:A unified statement}

Let $K$ be a field, $q\in K$, $q\neq 0,1$ be a fixed element.
If follows from Proposition \ref{prop:constantextentionVSsolutionspace} that we can suppose that
$K$ is finitely generated over the prime field.
Let $\cM_{K(x)}=(M_{K(x)},\Sgq)$ be a $q$-difference module over $K(x)$.
We recall the following notations:

\begin{trivlist}

\item $(\cA lg)$
If \emph{$q$ is algebraic over $\Q$, but not a root of unity}, we are in the following situation.
We call $Q$ the algebraic closure of $\Q$ inside $K$,
$\cO_Q$ the ring of integer of $Q$, $\cC$ the set of finite places $v$ of $Q$
and $\pi_v\in\cO_Q$ a $v$-adic uniformizer.
For almost all finite place $v$ of $Q$, the following {objects} are well defined:
the order $\kappa_v$, as a root of unity, of the reduction of $q$ modulo $\pi_v$
and the positive integer power $\phi_v$ of
$\pi_v$, such that $\phi_v^{-1}(1-q^{\kappa_v})$
is a unit of $\cO_Q$.
The field $K$ has the form $Q(\ul a,b)$, where $\ul a=(a_1,\dots,a_r)$ is a transcendence basis
of $K/Q$ and $b$ is a primitive element of the algebraic extension $K/Q(\ul a)$.
Choosing conveniently the set of generators $\ul a,b$ and $P(x)\in \cO_Q\l[\ul a,b,x\r]$,
we can always find a $q$-difference algebra $\cA$ of the form
\beq
\cA=\cO_Q\l[\ul a,b,x,\frac{1}{P(x)},\frac{1}{P(qx)},...\r]
\eeq
and a $\Sgq$-stable $\cA$-lattice $M$ of $\cM_{K(x)}$, so that
we can consider the
$\cA/(\phi_v)$-linear operator
$$
\Sgq^{\kappa_v}:M\otimes_{\cA}\cA/(\phi_v)\longrightarrow M\otimes_{\cA}\cA/(\phi_v),
$$
that we have called the {$v$-curvature of $\cM_{K(x)}$-modulo $\phi_v$}.
Notice that $\cO_Q/(\phi_v)$ is not an integral domain in general.

\item $(\cT rans)$
If \emph{$q$ is transcendental over the prime field of $K$}, then there exists a subfield $k$ of $K$ such that
$K$ is a finite extension of $k(q)$. We denote by
$\cC$ the set of places of $K$ that extend the places of $k(q)$, associated to
irreducible polynomials
$\phi_v$ of $k[q]$, that vanish at roots of unity.
Let $\kappa_v$ be the order of the roots of $\phi_v$.
Let $\cO_K$ be the integral closure of $k[q]$ in $K$.
Choosing conveniently $P(x)\in \cO_K[x]$,
we can always find a $q$-difference algebra $\cA$ of the form:
\beq
\cA=\cO_K\l[x,\frac{1}{P(x)},\frac{1}{P(qx)},...\r]
\eeq
and a $\Sgq$-stable $\cA$-lattice $M$ of $\cM_{K(x)}$, so that
we can consider the
$\cA/(\phi_v)$-linear operator
$$
\Sgq^{\kappa_v}:M\otimes_{\cA}\cA/(\phi_v)\longrightarrow M\otimes_{\cA}\cA/(\phi_v),
$$
that we have also called the {$v$-curvature of $\cM_{K(x)}$-modulo $\phi_v$}.
Notice that, once again, $\cO_K/(\phi_v)$ is not an integral domain in general.

\item $(\cR oot)$
If \emph{$q$ is a primitive root of unity of order $\kappa$},  we define $\cC$ to be the set containing
only the trivial valuation $v$ on $K$, $\phi_v=0$ and $\kappa_v=\kappa$. Then there exists a polynomial
$P(x)\in K[x]$ such that the algebra $\cA=K\l[x,\frac{1}{P(x)},\frac{1}{P(qx)},...\r]$ is $\sgq$-stable and there exists
a $\Sgq$-stable $\cA$-lattice $M$ of $\cM_{K(x)}$, so that
we can consider the
$\cA/(\phi_v)$-linear operator
$$
\Sgq^{\kappa_v}:M\otimes_{\cA}\cA/(\phi_v)\longrightarrow M\otimes_{\cA}\cA/(\phi_v),
$$
that we will call the {$v$-curvature of $\cM_{K(x)}$-modulo $\phi_v$}.
Notice that this is simply the $\kappa$-th iterate of $\Sgq$, namely $\Sgq^\kappa:M\longrightarrow M$.
\end{trivlist}

Then the main result of the first part of this work is:

\begin{thm}\label{thm:GrothKatzUnified}
A $q$-difference module $\cM_{K(x)}=(M_{K(x)},\Sgq)$ over $K(x)$ is trivial if and only if there exist
an algebra $\cA$, as above, and a $\Sgq$-stable $\cA$-lattice $M$ of $M_{K(x)}$
such that the map
$$
\Sgq^{\kappa_v}:M\otimes_{\cA}\cA/(\phi_v)\longrightarrow M\otimes_{\cA}\cA/(\phi_v),
$$
is the identity, for any $v$ in a cofinite non-empty subset of $\cC$.
\par
In the case $(\cA lg)$ we can take $\cC$ to be a set of finite places of
$Q$ of density 1, depending on $\cM_{K(x)}$.
\end{thm}

\begin{proof}
So the statement above coincide with Proposition \ref{prop:GrothKatzroot} if $q$ is a root of unity, and with
Theorem \ref{thm:GrothKatzalgbis} if $q$ is algebraic, but not a root of unity.
Finally, to deduce the third case from Theorem \ref{thm:GrothKaz}, it is enough to remark
that we can replace $k$ by its perfect closure.
\end{proof}

Of course, for a given module $\cM_{K(x)}$ we can always find a $q$-difference algebra $\cA$ as above
and a $q$-difference module $\cM$ over $\cA$ such that $\cM\otimes_\cA K(x)\cong \cM_{K(x)}$.
Also, if the statement above is true for a choice of $\cA$ and one $q$-difference module $\cM$ over $\cA$,
then it is true for all choice of $\cA$ and of $\cM$.
In the following chapters, we will use this fact implicitly.

\part{Intrinsic Galois groups}
\label{part:Intrinsic Galois groups}

\chapter{The intrinsic Galois group}
\label{chap:genericgaloisgroup}

\section{Definition and first properties}

Let $\cF$ be a $q$-difference field and
$\cM_\cF=(M_\cF,\Sgq)$ be a $q$-difference module of rank $\nu$ over $\cF$,
in the sense of Chapter \ref{chap:modules}.
We can consider
the family $Constr_{\cF}(\cM_{\cF})$ of $q$-difference modules
containing $\cM_{\cF}$ and closed under
direct sum, tensor product, dual, symmetric and antisymmetric products (see \S\ref{subsec:Algebraic constructions}).
We will denote by $Constr_{\cF}(M_{\cF})$ the collection of  constructions of linear algebra of the
$\cF$-vector space $M_{\cF}$, \ie, the collection of underlying $\cF$-vector spaces of the family
$Constr_{\cF}(\cM_{\cF})$.
We denote by  $\GL(M_{\cF})$, the group scheme that attach to any $\cF$-algebra $S$
the group of $S$-linear automorphisms of $M_\cF \otimes S$. This group scheme  acts naturally, by functoriality, on any element of $Constr_{\cF}(M_{\cF})${, after a scalar extension from $\cF$ to $S$}.

\begin{defn}
The {intrinsic Galois group\footnote{In the literature, the intrinsic Galois group is
also called the {generic} Galois group of $\cM_{\cF}$.}
$Gal(\cM_{\cF})$ of $\cM_{\cF}$}
is the subgroup of $\GL(M_{\cF})$ which stabilizes all the $q$-difference submodules over $\cF$
of any object in $Constr_{\cF}(\cM_{\cF})$.
\end{defn}

In the definitions above and below, the term ``stabilizer'' has to be understood in the functorial sense
of \cite[II.1.36]{demazuregabriel}.
For instance, $Gal(\cM_{\cF})$
is a functor from the category of $\cF$-algebras to the category of groups,
that associates  to any $\cF$-algebra $S$ the subgroup
of {$\GL(M_{\cF})(S)$, the   $S$-points of the $\cF$-group scheme  $\GL(M_{\cF})$,} that stabilizes  $\cN_\cF \otimes S$, for all the $q$-difference submodules $\cN_\cF$ over $\cF$
of any object in $Constr_{\cF}(\cM_{\cF})$.
By \cite[II.1.36]{demazuregabriel}, this functor is representable
and thus defines a group scheme over $\cF$.

Notice that in positive characteristic $p$, the group
$Gal(\cM_{\cF})$ is not necessarily reduced.
An easy example is given by the equation $y(qx)=q^{1/p}y(x)$, whose intrinsic Galois group is $\mu_p$
(\cf \cite[\S7]{vdPutReversatToulouse}).

\begin{rmk}
The group $Gal(\cM_{\cF})$ can be interpreted in a Tannakian framework as the group of tensor automorphisms of the forgetful functor  of the full Tannakian subcategory  generated by $\cM_{\cF}$ in $Diff(\cF, \sgq)$ (see \cite[\S 3.2.2.2.]{andreens}). This Tannakian point  of view allows to compare the different notions of Galois groups attached to $q$-difference modules.
\end{rmk}

\begin{rmk}\label{rmk:noetherianity}
We recall that the Chevalley theorem, that also holds for non-reduced groups
(\cf \cite[II, \S2, n.3, Corollary 3.5]{demazuregabriel}),
ensures that $Gal(\cM_{\cF})$ can be defined as the
stabilizer of a rank one submodule (which is not necessarily a $q$-difference module) of a
$q$-difference module contained in an algebraic construction of $\cM_{\cF}$.
Nevertheless, it is possible to find a  rank one $q$-difference  module that defines $Gal(\cM_{\cF})$
as its stabilizer.
In fact the noetherianity of $\GL(M_{\cF})$ implies that
$Gal(\cM_{\cF})$ is defined as the stabilizer of a finite family
of $q$-difference submodules $\cW_{\cF}^{(i)}=(W_{\cF}^{(i)},\Sgq)$ contained in some constructions of linear algebra
$\cM_{\cF}^{(i)}$ of $\cM_{\cF}$.
It follows that the line
$$
L_{\cF}=\wedge^{\dim \mathop\oplus_i W_{\cF}^{(i)}} \l(\mathop\oplus_i W_{\cF}^{(i)}\r)
\subset \wedge^{\dim \mathop\oplus_i W_{\cF}^{(i)}} \l(\mathop\oplus_i M_{\cF}^{(i)}\r)
$$
is a $q$-difference module and defines $Gal(\cM_{\cF})$
as a stabilizer (\cf \cite[proof of Proposition 9]{Katzbull}).
\end{rmk}

\begin{notation}
In the sequel, we will use the notation
$Stab(W_{\cF}^{(i)}, i)$ to say that a group is the stabilizer
of the {family} of vector spaces $\{W_{\cF}^{(i)}\}_i$.
\end{notation}

\section{Arithmetic characterization of the intrinsic Galois group}

From now on, we consider the particular case $\cF=K(x)$,
with the notation introduced in Chapter \ref{chap:A unified statement}.
Let $G$ be a  $K(x)$-subgroup scheme of $\GL(M_{K(x)})$, such that $G=Stab(L_{K(x)})$ for some line
$L_{K(x)}$ contained in a construction of linear algebra  $\cW_{K(x)}$ of $\cM_{K(x)}$.
Let $M$ be   a $\Sgq$-stable $\cA$-lattice  of $M_{K(x)}$  for some $q$-difference algebra $\cA$. Up to enlarging $\cA$, one finds  an $\cA$-lattice $L$ of $L_{K(x)}$ and
an $\cA$-lattice $W$ of $W_{K(x)}$. The latter is the underlying space of a $q$-difference module
$\cW=(W,\Sgq)$ over $\cA$.

\begin{defn}\label{defn:contains}
Let $\wtilde\cC$ be a cofinite non-empty subset of $\cC$ and $(\La_v)_{v\in\wtilde\cC}$
be {a family of invertible $\cA/(\phi_v)$-linear operators acting on $M\otimes_\cA\cA/(\phi_v)$, for any $v\in\wtilde\cC$, respectively.}
We say that {the $K(x)$-subgroup scheme
$G\subset \GL(M_{K(x)})$ contains the operators $\La_v$ modulo $\phi_v$ for almost all $v\in\cC$}
if for almost all, and at least one, $v\in\wtilde\cC$ the operator $\La_v$ stabilizes
$L\otimes_\cA\cA/(\phi_v)$ inside $W\otimes_\cA\cA/(\phi_v)$:
$$
\La_v\in Stab_{\cA/(\phi_v)}(L\otimes_\cA\cA/(\phi_v)).
$$
\end{defn}

\begin{rmk}\label{rmk:defn:contains}
As in \cite[10.1.2]{DVInv}, one can prove that the definition above is independent of the choice of
$\cA$, $M$ and $L_{K(x)}$.
\end{rmk}

\begin{notation}
From now on, we will always use the phrase ``for almost all'' to mean
``for almost all, and at least one''.
In this way the statements will be correct even in the case $(\cR oot)$ (see Chapter \ref{chap:A unified statement}).
\end{notation}

The main result of this section is the following:

\begin{thm}\label{thm:genGalois}
The $K(x)$-group scheme $Gal(\cM_{K(x)})$ is the smallest $K(x)$-subgroup scheme of
$\GL(M_{K(x)})$, that contains the operators $\Sgq^{\kappa_v}$ modulo
$\phi_v$, for almost all $v\in\cC$.
\end{thm}

\begin{rmk}~
\begin{itemize}
\item
The noetherianity of $\GL(M_{K(x)})$ implies that the smallest $K(x)$-subgroup scheme of
$\GL(M_{K(x)})$ that contains the operators $\Sgq^{\kappa_v}$ modulo
$\phi_v$, for almost all $v\in\cC$, is well-defined.
Theorem \ref{thm:genGalois} has been proved in \cite[Chapter 6]{Hendrikstesi} when $q$ is a root of unity
and in \cite{DVInv} when $q$ is algebraic and $K$ is a number field.
\item
Under the assumption $(\cA lg)$ (see Chapter \ref{chap:A unified statement}), the statement above is still true if we replace $\cC$ by
a set of finite places of $Q$ of density 1. This remark applies to all statements in this and the next chapter.
\end{itemize}
\end{rmk}

A part of Theorem \ref{thm:genGalois} is easy to prove:

\begin{lemma}\label{lemma:genGalois}
The $K(x)$-group scheme $Gal(\cM_{K(x)})$ contains the operators
$\Sgq^{\kappa_v}$ modulo $\phi_v$, for almost all $v\in\cC$.
\end{lemma}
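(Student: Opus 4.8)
The plan is to exploit Chevalley's theorem to realise the generic Galois group as the stabiliser of a single $q$-difference \emph{line}, and then to observe that, at almost every cyclotomic place, the reduction of $\Sgq^{\kappa_v}$ automatically fixes the reduction of that line. By Remark \ref{rmk:noetherianity} I may write $Gal(\cM_{K(x)},\eta_{K(x)})=Stab(L_{K(x)})$, where $L_{K(x)}$ is a rank one $q$-difference submodule of some construction $\cW_{K(x)}=(W_{K(x)},\Sgq)$ of $\cM_{K(x)}$. The reason for insisting that $L_{K(x)}$ be a $q$-difference submodule, and not merely a line, is precisely that it is then $\Sgq$-stable: since $\Sgq$ is a $\sgq$-semilinear bijection carrying the one dimensional space $L_{K(x)}$ into itself, we have $\Sgq(L_{K(x)})=L_{K(x)}$, and hence $\Sgq^{m}(L_{K(x)})=L_{K(x)}$ for every $m$.

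Next I would descend this to the level of lattices. The lattice $M$ of $M_{K(x)}$ induces lattices $L\subset W$ over $\cA$, with $\cW=(W,\Sgq)$ a $q$-difference module over $\cA$, so that $\Sgq$ is a bijection of $W$. The image $\Sgq(L)$ is again an $\cA$-lattice of the line $L_{K(x)}$, and two lattices of a rank one $K(x)$-space agree after inverting finitely many elements of $\cO_K$, that is, outside a finite set of places of $K$. Consequently $\Sgq(L)\otimes_{\cA}\cO_K/(\phi_v)=L\otimes_{\cA}\cO_K/(\phi_v)$ inside $W\otimes_{\cA}\cO_K/(\phi_v)$ for almost all $v\in\cC_K$; in particular, for almost all $v$ the reduction of $\Sgq$ maps $L\otimes_{\cA}\cO_K/(\phi_v)$ into itself, and therefore so does $\Sgq^{\kappa_v}$, the functorial action on $\cW$ being just the $\kappa_v$-th iterate of $\Sgq$.

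Finally I would record why the power $\kappa_v$ is the relevant one. Reduced modulo $\phi_v$ the operator $\Sgq$ is only $\sigma_{q_v}$-semilinear, so it is not an element of the linear group; but because $q^{\kappa_v}\equiv 1$ modulo $\phi_v$, the operator $\Sgq^{\kappa_v}$ induces an honest $\cA/(\phi_v)$-linear map on $W\otimes_{\cA}\cO_K/(\phi_v)$, as already noted after the statement of Theorem \ref{thmintro:GrothKaz}. Combining this with the previous paragraph, $\Sgq^{\kappa_v}$ is an $\cA/(\phi_v)$-linear operator stabilising $L\otimes_{\cA}\cO_K/(\phi_v)$, i.e.\ $\Sgq^{\kappa_v}\in Stab_{\cA/(\phi_v)}(L\otimes_{\cA}\cO_K/(\phi_v))$ for almost all $v\in\cC_K$. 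By Definition \ref{defn:contains} this is exactly the assertion that $Gal(\cM_{K(x)},\eta_{K(x)})$ contains the operators $\Sgq^{\kappa_v}$ modulo $\phi_v$ for almost all $v$, and by the Remark following that definition the statement is independent of the choices of $\cA$, $M$ and $L_{K(x)}$. There is no genuine obstacle here, this being the easy inclusion of Theorem \ref{thm:genGalois}; the only point requiring care is the lattice bookkeeping of the second paragraph, namely checking that the generic $\Sgq$-stability of the line $L_{K(x)}$ really forces the integral stability of the lattice $L$ modulo $\phi_v$ at all but finitely many places.
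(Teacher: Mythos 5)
Your proof is correct and follows essentially the same route as the paper: the paper's own argument is precisely that $Gal(\cM_{K(x)},\eta_{K(x)})$ can be realised as the stabiliser of a rank one $q$-difference submodule of a construction, which is \emph{a fortiori} stable under $\Sgq^{\kappa_v}$. Your additional lattice bookkeeping is sound (indeed, since $L$ can be taken to be $L_{K(x)}\cap W$ and $\Sgq$ is bijective on $W$, one even gets $\Sgq(L)=L$ on the nose), and the paper leaves this implicit.
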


\begin{proof}
The statement follows immediately from the fact that $Gal(\cM_{K(x)})$
can be defined as the stabilizer of a rank one $q$-difference module,
that is contained in a construction of linear algebra of $\cM_{K(x)}$, which is \emph{a fortiori} stable under the action
of $\Sgq^{\kappa_v}$.
\end{proof}

\begin{cor}[Lemma 9.34 in \cite{DVInv}]\label{cor:trivialgroup}
$Gal(\cM_{K(x)})=\{1\}$ if and only if
$\cM_{K(x)}$ is a trivial $q$-difference module.
\end{cor}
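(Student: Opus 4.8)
The statement is an equivalence, and I would prove the two implications separately by tannakian means, reserving the arithmetic input of Part~I for the bridge that makes this corollary useful later on.

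First I would treat the implication ``$\cM_{K(x)}$ trivial $\Rightarrow Gal(\cM_{K(x)},\eta_{K(x)})=\{1\}$''. Here I would exploit the description of the generic Galois group as $Aut^\otimes(\eta)$, the group of tensor automorphisms of the forgetful functor. Any such $\varphi$ acts as the identity on the unit object $\mathbf 1=(K(x),\sgq)$ and, by naturality, commutes with every morphism of $\langle\cM_{K(x)}\rangle^\otimes$. Since the horizontal vectors of $\cM_{K(x)}$ are exactly the images of the morphisms $\mathbf 1\to\cM_{K(x)}$, \ie $M_{K(x)}^{\Sgq}=\mathrm{Hom}(\mathbf 1,\cM_{K(x)})$, the automorphism $\varphi$ fixes every horizontal vector. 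When $\cM_{K(x)}$ is trivial these horizontal vectors span $M_{K(x)}$ over $K(x)$, and therefore $\varphi=\mathrm{id}$. The same conclusion can be reached by hand: $\varphi$ stabilises every line generated by a horizontal vector, which forces $\varphi$ to be a scalar, and realising $\mathbf 1$ as the construction $\wedge^\nu\cM_{K(x)}\otimes(\wedge^\nu\cM_{K(x)})^*$ and stabilising the line spanned by $f+e$ inside $\cM_{K(x)}\oplus\mathbf 1$, with $f$ horizontal and $e$ the canonical generator of $\mathbf 1$, pins the scalar to $1$.

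For the converse ``$Gal(\cM_{K(x)},\eta_{K(x)})=\{1\}\Rightarrow\cM_{K(x)}$ trivial'' I would invoke tannakian reconstruction. The forgetful functor $\eta$ makes $\langle\cM_{K(x)}\rangle^\otimes$ a neutral tannakian category over $K(x)$ whose fundamental group is $Gal(\cM_{K(x)},\eta_{K(x)})$, so that $\eta$ induces a tensor equivalence $\langle\cM_{K(x)}\rangle^\otimes\simeq\mathrm{Rep}_{K(x)}(Gal)$. If $Gal=\{1\}$, then $\mathrm{Rep}_{K(x)}(\{1\})$ is just the category of finite dimensional $K(x)$-vector spaces carrying the trivial $q$-difference structure, whence $\cM_{K(x)}$ is isomorphic to $(K(x)^\nu,\sgq)$, \ie trivial. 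I expect this to be the main obstacle of the argument: one is tempted to deduce directly that $Gal=\{1\}$ forces the curvatures $\Sgq^{\kappa_v}$ modulo $\phi_v$ to be trivial, but Definition~\ref{defn:contains} together with Lemma~\ref{lemma:genGalois} only yields that these operators stabilise the defining line $L\otimes_{\cA}\cO_K/(\phi_v)$, and a stabiliser may grow under reduction modulo $\phi_v$; so a purely ``curvature'' argument does not close this implication, and it is the reconstruction that does the work.

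Finally I would record the arithmetic reading that justifies placing this corollary after Part~I: combining the equivalence just proved with Theorem~\ref{thm:GrothKaz} gives that $Gal(\cM_{K(x)},\eta_{K(x)})=\{1\}$ if and only if $\cM$ has zero $\kappa_v$-curvature modulo $\phi_v$ for almost all $v\in\cC$. This is exactly the instance $G=\{1\}$ of Theorem~\ref{thm:genGalois}, and it is the base case from which both the general characterisation and the iterative reformulation of Corollary~\ref{cor:Grothit} are bootstrapped; the analogue over number fields is the corresponding statement in \cite{DVInv}.
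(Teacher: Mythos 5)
The first implication, ``$\cM_{K(x)}$ trivial $\Rightarrow Gal(\cM_{K(x)},\eta_{K(x)})=\{1\}$'', is correct and is essentially the paper's argument; your extra care with the scalar ambiguity is welcome, since the lines $K(x)\cdot(f\oplus e)\subset \cM_{K(x)}\oplus\mathbf 1$ with $f$ horizontal are genuine sub-$q$-difference modules and do pin the stabiliser down to $\{1\}$ rather than to $\mathbb G_m$.

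The converse is where your proposal breaks down. The category $\langle\cM_{K(x)}\rangle^\otimes$ is \emph{not} a neutral tannakian category over $K(x)$: one has $End(\mathbf 1)=K(x)^{\sgq}=K$, so it is a tannakian category over $K$, and $\eta_{K(x)}$ is a fiber functor over the nontrivial extension $K(x)/K$. Deligne's reconstruction theorem then identifies $\langle\cM_{K(x)}\rangle^\otimes$ with representations of a transitive affine \emph{groupoid} acting on $\mathrm{Spec}\,K(x)$ over $K$, of which $Gal(\cM_{K(x)},\eta_{K(x)})=Aut^\otimes_{K(x)}(\eta_{K(x)})$ is only the diagonal part; the equivalence $\langle\cM_{K(x)}\rangle^\otimes\simeq \mathrm{Rep}_{K(x)}(Gal)$ you invoke is false, and this is precisely why the paper insists that no Galois correspondence holds for generic Galois groups. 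One could try to salvage a formal argument by showing that triviality of the diagonal forces the groupoid to be the pair groupoid and then descending along $K(x)/K$, but that is a genuinely different proof which you do not carry out. Meanwhile, the route you dismiss is the one the paper takes: by Lemma \ref{lemma:genGalois}, $Gal(\cM_{K(x)},\eta_{K(x)})$ contains the operators $\Sgq^{\kappa_v}$ modulo $\phi_v$, and the point you are missing is that the notion of ``containing'' in Definition \ref{defn:contains} is independent of the chosen defining line (see the remark following that definition and \cite[10.1.2]{DVInv}). Applied to defining data for the trivial group built from the lines $K(x)\cdot(m\oplus e)$ in $\cM_{K(x)}\oplus\mathbf 1$, whose mod-$\phi_v$ stabiliser consists only of the identity, this yields that $\Sgq^{\kappa_v}$ induces the identity on $M\otimes_{\cA}\cO_K/(\phi_v)$ for almost all $v\in\cC$ --- your worry that ``a stabiliser may grow under reduction'' is exactly what this independence statement disposes of. Theorem \ref{thm:GrothKaz} then gives triviality of $\cM_{K(x)}$. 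The arithmetic input is essential here and cannot be replaced by tannakian yoga over $K(x)$; note also that Corollary \ref{cor:Grothit} and the equivalence of Theorems \ref{thm:GrothKaz} and \ref{thm:genGalois} are built on precisely this use of the curvatures.
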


Now we are ready to give the proof of Theorem \ref{thm:genGalois}, whose main ingredient is Theorem
\ref{thm:GrothKatzUnified}. The argument
is inspired by \cite[\S X]{Katzbull}.

\begin{proof}[Proof of Theorem \ref{thm:genGalois}]
Lemma \ref{lemma:genGalois} says that $Gal(\cM_{K(x)})$ contains the smallest $K(x)$-subgroup scheme $G$
of $\GL(M_{K(x)})$, that contains the operator
$\Sgq^{\kappa_v}$ modulo $\phi_v$, for almost all $v\in\cC$.
Let $L_{K(x)}$ be a line contained in some construction of linear algebra of  $\cM_{K(x)}$,
that defines $G$ as a stabilizer. Then there exists a smaller $q$-difference
module $\cW_{K(x)}$ over $K(x)$ that contains $L_{K(x)}$. Let $L$ and $\cW=(W,\Sgq)$
be the associated $\cA$-modules.
Any generator $m$ of $L$ as an $\cA$-module is a cyclic vector for $\cW$ and the operator $\Sgq^{\kappa_v}$
acts on $W\otimes_\cA\cA/(\phi_v)$ with respect to the basis induced by the cyclic basis
generated by $m$ via a diagonal matrix.
Because of the definition of the $q$-difference structure on the dual module
$\cW^*$ of $\cW$, the group
$G$ can be defined as the $K(x)$-subgroup scheme of $\GL(M_{K(x)})$ that fixes
a line $L^\p$ in $W^*\otimes W$,
\ie, such that $\Sgq^{\kappa_v}$ acts as the identity on $L^\p\otimes_\cA\cA/(\phi_v)$,
for almost all $v\in\cC$.
It follows from Theorem \ref{thm:GrothKatzUnified} that
the minimal submodule $\cW^\p$ that contains $L^\p$ becomes trivial over $K(x)$.
Since   $\cW^\p_{K(x)}$
is contained in some construction of linear algebra of $\cM_{K(x)}$,
we have a functorial surjective group morphism
$$
Gal(\cM_{K(x)})\longrightarrow Gal(\cW^\p_{K(x)})=\{1\}.
$$
We conclude that $Gal(\cM_{K(x)})$ acts trivially over $\cW^\p_{K(x)}$, and
therefore that $Gal(\cM_{K(x)})$ is contained in $G$.
\end{proof}

\begin{notation}
To conclude, we point out that  Theorem \ref{thm:genGalois}   is nothing but an equivalent   geometric statement
of Theorem \ref{thm:GrothKatzUnified}. The core of this equivalence being Corollary \ref{cor:trivialgroup}, that allows to translate  the triviality  of the intrinsic Galois  group in terms of the rationality  of the solutions.
\end{notation}

\section{Finite intrinsic Galois groups}

We deduce from Theorem \ref{thm:genGalois}
the following description of a finite intrinsic Galois group:

\begin{cor}\label{cor:finitegroups}
The following facts are equivalent:
\begin{enumerate}
\item
There exists a positive integer $r$ such that the
$q$-difference module $\cM=(M,\Sgq)$ becomes trivial
as a $\wtilde q$-difference module over $K(\wtilde q,t)$,
with $\wtilde q^r=q$, $t^r=x$.

\item
There exists a positive integer $r$ such that,
for almost all $v\in\cC$, the morphism $\Sgq^{\kappa_v r}$ induces the identity
on $M\otimes_\cA\cA/(\phi_v)$.

\item
There exists a $q$-difference field extension $\cF/K(x)$ of finite degree such that
$\cM$ becomes trivial over $\cF$.

\item
The (intrinsic) Galois group of $\cM$ is finite.
\end{enumerate}
In particular, if $Gal(\cM_{K(x)})$ is finite, it is necessarily cyclic (of order $r$, if one chooses
$r$ minimal in the assertions above).
\end{cor}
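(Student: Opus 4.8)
The plan is to prove the cycle $(2)\Rightarrow(1)\Rightarrow(3)\Rightarrow(4)\Rightarrow(2)$, reading off the final cyclicity statement along the way, using as black boxes the triviality criterion (Theorem~\ref{thm:GrothKaz}) and the arithmetic description of the generic Galois group (Theorem~\ref{thm:genGalois}), together with Corollaries~\ref{cor:trivialgroup} and~\ref{cor:genGalois}. The implication $(1)\Rightarrow(3)$ is essentially a tautology: with $\wtilde q^{\,r}=q$ and $t^r=x$ the field $K(\wtilde q,t)$ is a finite extension of $K(x)$, and the operator $\sg_{\wtilde q}:t\mapsto\wtilde q\,t$ extends $\sgq$ because $\sg_{\wtilde q}(x)=\sg_{\wtilde q}(t^r)=\wtilde q^{\,r}t^r=qx$; hence it provides a finite $q$-difference field extension over which $\cM$ becomes trivial.

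The equivalence $(2)\Leftrightarrow(1)$ I would obtain by applying Theorem~\ref{thm:GrothKaz} to $\cM_{K(\wtilde q,t)}=(M\otimes_\cA K(\wtilde q,t),\Sgq\otimes\sg_{\wtilde q})$, now regarded as a $\wtilde q$-difference module over the base $(K(\wtilde q),\wtilde q)$. By that theorem $\cM$ becomes trivial over $K(\wtilde q,t)$ if and only if $\cM_{K(\wtilde q,t)}$ has zero $\kappa_w$-curvature modulo $\phi_w$ for almost all cyclotomic places $w\in\cC_{K(\wtilde q)}$. As in Lemma~\ref{basis change}, on the lattice the new operator merely iterates the old one, so $\Sg_{\wtilde q}^{\kappa_w}=\Sgq^{\kappa_w}$; reducing modulo $\phi_w$ over the place $v\in\cC_K$ lying below $w$ (where $q$ has order $\kappa_v$, with $\kappa_w/\kappa_v$ a divisor of $r$) identifies this $\kappa_w$-curvature with the reduction of a fixed power of $\Sgq^{\kappa_v}$. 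Condition $(2)$, namely $\Sgq^{\kappa_v r}\equiv 1$ modulo $\phi_v$ for almost all $v$, then translates into the vanishing of these $\kappa_w$-curvatures for almost all $w$, and conversely.

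For $(3)\Rightarrow(4)$ the idea is that triviality over a finite extension forces finiteness of the group: if $\cM$ becomes trivial over a finite $\cF/K(x)$ then $Gal(\cM_\cF,\eta_\cF)=\{1\}$ by the $\cF$-analogue of Corollary~\ref{cor:trivialgroup}, and since passing to a finite extension of the base does not change the dimension of the generic Galois group, $Gal(\cM_{K(x)},\eta_{K(x)})$ is $0$-dimensional, hence finite. For the return implication $(4)\Rightarrow(2)$ I would use Theorem~\ref{thm:genGalois}: by Corollary~\ref{cor:genGalois} the curvature $\Sgq^{\kappa_v}$ reduces to a point of $G=Gal(\cM_{K(x)},\eta_{K(x)})$ modulo $\phi_v$. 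A finite group scheme over a field has an exponent, that is an integer $r$ annihilating all its points over every residue field (in positive characteristic one first splits off the non-reduced part by the d\'evissage $1\to Gal_{red}\to Gal\to\mu_{p^\ell}\to1$ of Theorem~\ref{thm:genGaloisred}), so $(\Sgq^{\kappa_v})^{r}\equiv 1$ modulo $\phi_v$ for almost all $v$, which is $(2)$.

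The cyclicity then follows because the trivializing extension $K(\wtilde q,t)/K(x)$ of $(1)$ is cyclic, its group of $q$-difference automorphisms being generated by $t\mapsto\zeta_r t$ (equivalently, a finite difference Galois group is generated by the residual action of the single operator $\sgq$); as $Gal(\cM_{K(x)},\eta_{K(x)})$ embeds into it, minimality of $r$ forces the order to be exactly $r$. The step I expect to be the main obstacle is the arithmetic comparison of curvatures hidden in $(2)\Leftrightarrow(1)$: one must control, for almost all cyclotomic places $w$ of $K(\wtilde q)$, the order $\kappa_w$ of $\wtilde q$ and its relation to the order $\kappa_v$ of $q$ below it and to $r$, so that $\Sgq^{\kappa_v r}\equiv1$ propagates to the vanishing of every $\kappa_w$-curvature. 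This is the same kind of input on the distribution of orders of reductions of roots of unity that underlies Proposition~\ref{prop:ratexp} and the dynamical Lemma~\ref{lemma:rationalfunctions}, and it is where the argument will require the most care.
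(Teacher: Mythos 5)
Your cycle agrees with the paper's proof on most of its arcs: $1\Leftrightarrow 2$ is obtained there exactly as you propose, by applying Theorem~\ref{thm:GrothKaz} to $(M\otimes K(\wtilde q,t),\Sgq\otimes\sg_{\wtilde q})$; $1\Rightarrow 3$ is the same tautology; and $4\Rightarrow 2$ is the same exponent argument via Corollary~\ref{cor:genGalois} (the paper phrases it as: the reduction of $\Sgq^{\kappa_v}$ is a cyclic operator of order dividing the cardinality of the group). The genuine divergence, and the gap, is in how you feed assertion $(3)$ back into the loop.

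You dispose of $(3)\Rightarrow(4)$ by asserting that passing to a finite extension of the base does not change the dimension of the generic Galois group. The inclusion that is actually formal, $Gal(\cM_\cF,\eta_\cF)\subset Gal(\cM_{K(x)},\eta_{K(x)})\otimes_{K(x)}\cF$ (over $\cF$ there are more $\Sgq$-stable subobjects, hence a smaller stabilizer), gives $\dim Gal(\cM_\cF,\eta_\cF)\leq\dim Gal(\cM_{K(x)},\eta_{K(x)})$ --- the wrong direction. What you need is that the dimension cannot drop, and for an \emph{arbitrary} finite $q$-difference field extension $\cF/K(x)$ this is neither proved in the paper nor formal: the standard route is restriction of scalars, $\cM_{K(x)}$ being a subobject of $(\cF,\sgq)^{\oplus\nu}$ once $\cM_\cF$ is trivial, and it requires knowing that the rank-$[\cF:K(x)]$ module $(\cF,\sgq)$ has \emph{finite} generic Galois group over $K(x)$. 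That is essentially equivalent to knowing that finite $q$-difference extensions of $K(x)$ are of the special Kummer form $K(\wtilde q,t)$ --- which is precisely the content of the Lemma the paper proves immediately after the corollary (a solution of the equation lying in an algebraic $q$-difference extension already lies in $K(\wtilde q,t)$, shown by a meromorphy/growth argument for a norm with $|q|\neq 1$, applied to a cyclic vector). The paper uses that Lemma to prove $3\Rightarrow 1$ directly and thereby never needs any invariance statement. So your shortcut silently presupposes the one concrete lemma that actually has to be proved; as written, $(3)\Rightarrow(4)$ does not close.

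One further caution on $(2)\Rightarrow(1)$, which you rightly flag as the delicate arithmetic step: for $w$ above $v$ one has $\kappa_w=\kappa_v\gcd(\kappa_w,r)$, which can be a \emph{proper} divisor of $r\kappa_v$, so hypothesis $(2)$ only gives $(\Sgq^{\kappa_w})^{r\kappa_v/\kappa_w}\equiv 1$ modulo $\phi_w$ and not directly the vanishing of the $\kappa_w$-curvature itself. The paper is equally terse here (it relies on the interplay with $2\Leftrightarrow 4$ through Corollary~\ref{cor:genGalois}, where the curvatures are simultaneously diagonalized with $r$-th roots of unity as eigenvalues), but in your ordering of the cycle you cannot borrow $(4)$ to repair it, so this step would need to be written out rather than deferred.
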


\begin{proof}
The equivalence ``$1\Leftrightarrow 2$'' follows from Theorem \ref{thm:GrothKatzUnified}
applied to the $\wtilde q$-dif\-fer\-ence module $(M\otimes K(\wtilde q,t),\Sgq\otimes\sg_{\wtilde q})$,
over the field $K(\wtilde q,t)$.
\par
If the
intrinsic Galois group
is finite, the reduction modulo $\phi_v$ of $\Sgq^{\kappa_v}$ must be a cyclic operator
of order dividing the cardinality of $Gal(\cM_{K(x)})$. So we have proved that
``$4\Rightarrow 2$''.
On the other hand, assertion $2$ implies, by Theorem \ref{thm:genGalois}, that
there exists a basis of $M_{K(x)}$ such that the representation of $Gal(\cM_{K(x)})$
is given by the group of diagonal matrices, whose diagonal entries are $r$-th roots of unity.
\par
Of course, assertion $1$ implies assertion $3$. The inverse implication follows from
the Corollary \ref{cor:finiteextensionofqdifffield}, applied to a cyclic basis of $\cM_{K(x)}$.
\end{proof}

\section{Intrinsic
Galois group of a \texorpdfstring{$q$}{q}-difference module over \texorpdfstring{$\C(x)$}{C(x)}, for
\texorpdfstring{$q\not=0,1$}{q different than 0,1}}
\label{sec:complexmodules}

We deduce from the previous section a curvature characterization of the intrinsic Galois group of a
$q$-difference module over $\C(x)$, for $q\in\C\smallsetminus\{0,1\}$.\footnote{All the statements in this subsection
remain true if one replace $\C$ with any field of characteristic zero.}

\medskip
Let $\cM_{\C(x)}=(M_{\C(x)},\Sgq)$ be a $q$-difference module over $\C(x)$.
We can consider a finitely generated extension $K$ of $\Q$ such that there exists a
$q$-difference module $\cM_{K(x)}=(M_{K(x)},\Sgq)$ satisfying
$\cM_{\C(x)}=\cM_{K(x)}\otimes_{K(x)}\C(x)$.

With an abuse of language, Theorem \ref{thm:GrothKatzUnified} can be rephrased as:

\begin{thm}\label{thm:complexmodules}
The $q$-difference module $\cM_{\C(x)}=(M_{\C(x)},\Sgq)$ is trivial if and only if
there exists a finitely generated extension $K$ of $\Q$, a set of places $\cC$ as in Chapter \ref{chap:A unified statement}
and a $q$-difference module $\cM_{K(x)}$ such that $\cM_{\C(x)}\cong\cM_{K(x)}\otimes_{K(x)}\C(x)$ and
$\cM_{K(x)}$ has zero $v$-curvature, for almost all $v\in\cC$.
\end{thm}

We can of course define as in the previous sections an intrinsic Galois group $Gal(\cM_{\C(x)})$.
A noetherianity argument, that we have already used several times,
shows the following:

\begin{prop}\label{prop:finitegenextension}
In the notation above we have:
$$
Gal(\cM_{\C(x)})
\subset Gal(\cM_{K(x)})\otimes_{K(x)}\C(x).
$$
Moreover there exists a finitely generated extension $K^\p$ of $K$ such that
$$
Gal(\cM_{K(x)}\otimes_{K(x)}{K^\p(x)})\otimes_{K^\p(x)}\C(x)\cong Gal(\cM_{\C(x)}).
$$
\end{prop}

Choosing $K$ large enough, we can assume that $K=K^\p$, which we will do implicitly
in the following informal statement.
We can deduce
from Theorem \ref{thm:complexmodules}:

\begin{thm}\label{thm:complexmodulesgenGalois}
The intrinsic Galois group $Gal(\cM_{\C(x)})$ is the smallest
$\C(x)$-subgroup scheme of $\GL(M_{\C(x)})$ that
contains the $v$-curvature of the $q$-difference module $\cM_{K(x)}$, for $K$ large enough and
for almost all $v\in\cC$.
\end{thm}

\begin{rmk}\label{rmk:comparison}
By \cite[Lemma 1.8]{vdPutSingerDifference}, there exists a  $q$-difference ring extension $R$ of $\C(x)$, that is generated as $\C(x)$-algebra  by a fundamental solution  matrix of   some $q$-difference system attached to $\cM_{\C(x)}$  such that $R^{\sigma_q}=\C$ and $R$ is $\sigma_q$-simple, that is, $R$ has no non trivial ideal setwise invariant by $\sigma_q$. The Galois group  of $R$ over $\C(x)$ is defined as the group of $\C(x)$-algebra automorphisms of $R$, that commute with $\sigma_q$. It is the group of $\C$-points of a linear algebraic group $\mathcal{G}$ (see  \cite[Theorem 1.13]{vdPutSingerDifference}). In that framework, there is a complete Galois correspondence that allows to build a dictionary between the defining equations of the group $\mathcal{G}$ and the algebraic relations satisfied by the solutions of the $q$-difference system in $R$ (see \cite[Theorem 1.29]{vdPutSingerDifference}).  One can deduce from Tannakian arguments  that the intrinsic Galois group becomes isomorphic to  $\mathcal{G}$ over a finite field extension of $\C(x)$ (see for instance \cite[\S 3.2.2.1]{andreens}).
\end{rmk}


\chapter{The parametrized intrinsic Galois group}
\label{chap:kolchin}
\section{Differential and difference algebra}
\label{subsec:diffalgeo}

In this section, we briefly recall some notions of differential algebra as well as the geometric objects that one can defined in this formalism.  The interested reader can find a first introduction to differential algebra and differential algebraic geometry in \cite{hardSingerSauloy} and a full presentation of this field in \cite{diffalgkolch}. We recall that all rings considered in this work are commutative with identity and contain the ring of integer numbers.

A differential ring (or $\partial$-ring for short) is a ring $R$ together with a derivation $\partial:R\rightarrow R$,
\ie, {an additive} map  $\partial : R \rightarrow R$ satisfying the Leibniz rule
$\partial(ab)=\partial(a)b + a \partial(b)$, for all $(a,b) \in R^2$. The ring of $\partial$-constants of $R$ is $R^\partial=\{r\in R| \ \partial(r)=0\}$. Any standard algebraic notion has a differential counterpart by requiring the
compatibility of the algebraic structure with  the derivation $\partial$ : for instance a $\partial$-ideal $\mathfrak q$ of a $\partial$-ring $R$ is an ideal of $R$ that is setwise invariant by $\partial$, a $\partial$-field is a $\partial$-ring that is a field, etc.
For  $k$  a $\partial$-field,  the $\partial$-$k$-algebra $k\{x\}_\partial=k\{x_1,\ldots,x_n\}_\partial$ of $\partial$-polynomials over $k$
in the $\partial$-variables $x_1,\ldots,x_n$ is the polynomial ring over $k$ in the
countable set of algebraically independent variables $x_1,\ldots,x_n,\partial(x_1),\ldots,\partial(x_n),\ldots,$ with an action of $\partial$ as suggested by the names of the variables.

The notion of $\partial$-polynomials allows to develop a geometry where the varieties are defined as the zero sets of collections of $\partial$-polynomials. We won't give here a complete presentation of this geometry but we shall only focus on the notions  that will be used in this paper. {A $\partial$-field $k$ is called differentially closed or $\partial$-closed, for short, if  any system of $\partial$-polynomials with coefficients in $k$, having a solution in some
differential field extension of $k$, has a solution in $k$. A differential closure of a $\partial$-field $k$ is a $\partial$-field extension of $k$, minimal with respect to the property of being $\partial$-closed.}

\begin{defn}
Let $k$ be a $\partial$-field. A representable  functor $G$ from the category of $\partial$-$k$-algebras to the category of Groups is called \emph{ a $\partial$-$k$-group scheme}. The $\partial$-$k$-algebra representing $G$ is denoted by $k\{ G\}$ and called the ring of $\partial$-coordinates of $G$.
\end{defn}

The Yoneda Lemma implies that the ring of $\partial$-coordinates of a $\partial$-$k$-group scheme is  a $\partial$-$k$-Hopf algebra, that is a $\partial$-$k$-algebra that is an Hopf algebra whose structural maps commutes with $\partial$ (see \cite[\S 3.2 and 3.4]{DiffalgOv}).
 For instance,  we denote by $\mathrm{GL}_\nu(k)$ the $\partial$-$k$-group scheme attached to  the general linear group of size $\nu$ over $k$.  It is represented by
the $\partial$-$k$-algebra $k\l\{X, \frac{1}{\mathrm{det}(X)} \r\}_\partial$ where $X$ is a $\nu \times \nu$ matrix of $\partial$-indeterminates. More generally, for any $k$-vector space $V$ of finite dimension, we denote by $\bfGL(V)$ the $\partial$-$k$-group scheme of invertible $k$-linear automorphisms of $V$. As in the classical setting,  one can define  a  $\partial$-$k$-subgroup scheme $H$ of a $\partial$-$k$-group scheme as subfunctor of $G$. The  ring of $\partial$-coordinates of $H$ is the quotient of  $k\{G\}$ by some $\partial$-ideal $\I(H)$, that we call defining ideal of $H$ inside $G$.

It remains to explain some of  the relations between the classical algebraic geometry and the differential algebraic geometry. Let $V$ be a $k$-group scheme, \ie, a (covariant) functor from the category of
$k$-algebras to the category of groups which is representable by a $k$- Hopf-algebra $k[V]$.
We call $k[V]$ the ring
of coordinates of $V$. In \cite{Gilletdiffalg}, the author shows that
the forgetful functor
$$
\eta : \hbox{~$\partial$-$k$-algebras~} \rightarrow \hbox{~$k$-algebras},
$$
that associates to any $\partial$-$k$-algebra its underlying $k$-algebra,  has a left
adjoint denoted by $D$. This implies that the functor $\mathbf V $  from the category of
$\partial$-$k$-algebras  to the category of Groups, defined by the composition of $V$ with the forgetful functor $\eta$
is a  $\partial$-$k$-group scheme, whose ring of $\partial$-coordinates is precisely $D(k[V])$. We call
$\mathbf V$, the $\partial$-$k$-group scheme attached to $V$.  The simple idea behind this construction
is that polynomial equations are $\partial$-polynomials. More precisely if  $V \subset \mathrm{GL}_\nu(k) $  and
if $I(V) \subset k\l [X, \frac{1}{\mathrm{det}(X)} \r]$ is the vanishing ideal of $V$ as subgroup scheme of  $\mathrm{GL}_\nu(k)$ then
the vanishing ideal of $\mathbf V$ as $\partial$-$k$-subgroup scheme of $ \mathrm{GL}_\nu(k)$ is nothing else than
the $\partial$-ideal generated by $I(V)$ in $k\l\{X, \frac{1}{\mathrm{det}(X)} \r\}_\partial$. Finally, Kolchin
irreducibility theorem states that if $k[V]$ is a finitely generated integral $k$-algebra,
then  $D(k[V])$ is a finitely $\partial$-generated integral $\partial$-$k$-algebra and
 the dimension of $V$ as $k$-scheme coincides with the $\partial$-dimension of $\mathbf V$ over $k$ (\cite[\S 2]{Gilletdiffalg}). {Notice that we are calling $\mathrm{GL}_\nu(k)$ both the $k$-group scheme and the $\partial$-$k$-group scheme attached to  the general linear group,
anyway the context will always make clear to which one of the two structures we are referring to, without introducing complicate notation.}

Conversely, given a $\partial$-$k$-subgroup scheme $\mathbf V$ of some $\mathrm{GL}_\nu(k)$, we can
attach to $\mathbf V$ a $k$-subgroup scheme of $\mathrm{GL}_\nu(k)$ as follows. Let
$\I(\mathbf V) \subset k\l\{X, \frac{1}{\mathrm{det}(X)} \r\}_\partial$ be the defining  ideal
of $\mathbf V$ in $ \mathrm{GL}_\nu(k) $. Let   ${\mathbf V}^Z$ be the $ k$-subscheme  of $\mathrm{GL}_\nu(k)$
defined by the ideal $\I(V) \cap k[X,\frac{1}{\mathrm{det}(X)}]$. We call  ${\mathbf V}^Z$
the Zariski closure of $\mathbf V$ inside $ \mathrm{GL}_\nu(k) $.

\section{Parametrized intrinsic Galois groups}
\label{sec:intrinsicgaloisgroup}

Let $\cF$ be a $\sgq$-$\partial$-field of \emph{characteristic zero},
that is, an extension of $K(x)$ equipped with an extension of the $q$-difference operator $\sgq$ and
a derivation $\partial$ commuting to $\sgq$.
For instance, the $\sgq$-$\partial$-field $(K(x),\sgq,x\frac{d}{dx})$ satisfies these assumptions.
\par
We can define an action of the derivation $\partial$ on the category
$Diff(\cF,\sgq)$, twisting the $q$-difference modules with the right $\cF$-module $\cF[\partial]_{\leq 1}$
of differential operators of order less or equal
than one.
We recall that the structure of right $\cF$-module on $\cF[\partial]_{\leq 1}$
is defined via the Leibniz rule, \ie,
$$
\partial.\lambda =\lambda \partial + \partial(\lambda),
\hbox{~for any $\la\in\cF$.}
$$
Let $V$ be an $\cF$-vector space. We denote by $F_\partial(V)$ the tensor product of
the right $\cF$-module $\cF[\partial]_{\leq 1}$ with the left $\cF$-module $V$:
$$
F_\partial(V):=\cF[\partial]_{\leq 1} \otimes_\cF V.
$$
We will write $v$ for $1\otimes v\in F_\partial(V)$ and $\partial(v)$ for
$\partial\otimes v\in F_\partial(V)$, so that $av+b\partial(v):=(a+b\partial)\otimes v$, for any $v\in V$ and
$a+b\partial\in \cF[\partial]_{\leq 1}$.
We endow $F_\partial(V)$
with a left $\cF$-module structure such that if $\lambda \in\cF$:
$$
\lambda\partial(v)=\partial(\lambda v)-\partial(\lambda)v,
\ \mbox{for all} \, v \in V,
$$
which means that $\la(\partial\otimes v)=\partial\otimes\la v-1\otimes\partial(\la)v$.

\begin{defn}\label{defn:prolong}
The {prolongation functor $F_\partial$} is defined on the category of $\cF$-vector spaces as follows.
It associates to any object $V$ the $\cF$-vector space $F_\partial(V)$.
If $f:V\rightarrow W$ is a
morphism of $\cF$-vector space then we define
$$
F_\partial(f):F_\partial(V)\rightarrow F_\partial(W),
$$
setting $F_\partial(f)(\partial^i(v))=\partial^i(f(v))$, for any $i=0,1$ and any $v\in V$
(using the convention that $\partial^0$ is the identity).
\par
The prolongation functor $F_\partial$ restricts to a functor from the category $Diff(\cF,\sgq)$ to itself in the
following way:
\begin{enumerate}

\item
If $\cM_\cF:=(M_\cF, \Sgq)$ is an object of $Diff(\cF,\sgq)$ then
$F_\partial(\cM_\cF)$
is the $q$-difference module, whose underlying $\cF$-vector space is
$F_\partial(M_\cF)=\cF[\partial]_{\leq 1} \otimes M_\cF$, as above, equipped with the $q$-invertible
$\sgq$-semilinear operator defined by $\Sgq(\partial^i(m)):=\partial^i(\Sgq(m))$ for $i=0,1$.

\item If $f \in Hom(\cM_\cF,\cN_\cF)$ then $F_\partial(f)$ is defined in the same way as for
$\cF$-vector spaces.
\end{enumerate}
\end{defn}

\begin{rmk}\label{rmk:prolongationmatrix}
This formal definition comes from a simple and concrete idea.
Let $\cM_\cF$ be an object of $ Diff(\cF,\sgq)$. We fix a basis
$\ul e$ of $\cM_\cF$ over $\cF$ such that $\Sgq\ul e=\ul e A$.
Then $(\ul e,\partial(\ul e))$ is a basis of $F_\partial(M_\cF)$ and
$$
\Sgq (\ul e,\partial(\ul e))=(\ul e,\partial(\ul e))
\begin{pmatrix}A & \partial A \\0 & A\end{pmatrix}.
$$
In other terms, if
$\sgq(Y)=A^{-1}Y$ is a $q$-difference system associated to $\cM_\cF$
with respect to a fixed basis $\ul e$,
the $q$-difference system associated to $F_\partial(\cM_\cF)$ with respect to the basis
$\ul e,\partial(\ul e)$ is:
$$
\sgq(Z)=\begin{pmatrix}A^{-1} & \partial(A^{-1}) \\0 & A^{-1}\end{pmatrix}Z
=\begin{pmatrix}A & \partial A \\0 & A\end{pmatrix}^{-1}Z.
$$
If $Y$ is a solution of $\sgq(Y)=A^{-1}Y$ in some $\sgq$-$\partial$-extension of $\cF$ then
we have:
$$
\sgq\begin{pmatrix}\partial Y& Y\\ Y& 0\end{pmatrix}
=
\begin{pmatrix}A^{-1} & \partial(A^{-1}) \\0 & A^{-1}\end{pmatrix}
\begin{pmatrix}\partial Y& Y\\ Y& 0\end{pmatrix},
$$
in fact the commutation of $\sgq$ and $\partial$ implies:
$$
\sgq(\partial Y)=\partial(\sgq Y)=\partial(A^{-1}\,Y)=
A^{-1}\,\partial Y+\partial(A^{-1})\,Y.
$$
\end{rmk}

Let $V$ be a finite dimensional $\cF$-vector space. We denote by $Constr_{\cF}^{\partial} (V)$ the smallest family of
finite dimensional $\cF$-vector spaces containing $V$ and closed with respect to the constructions of linear algebra
{(\ie, direct sums, tensor products, symmetric and antisymmetric products, duals. See \S\ref{subsec:Algebraic constructions}.)}
and the functor $F_\partial$.
We will say that an element $Constr_{\cF}^{\partial} (V)$ is a {construction of differential algebra} of $V$.
By functoriality, the $\partial$-$\cF$-group scheme\footnote{
{We denote by $\bfGL(V)$ is the $\partial$-$\cF$-group scheme
attached to $\GL(V)$ as in \S \ref{subsec:diffalgeo}.}}
$\bfGL(V)$ operates on $Constr_{\cF}^{\partial}(V)$.
For example $g \in\bfGL(V)$ acts on $F_\partial(V)$
through $g(\partial^i(v))=\partial^i(g(v))$, for $i=0,1$.
\par
If we start with a $q$-difference module
$\cM_\cF=(M_\cF,\Sgq)$ over $\cF$, then every object of $Constr_{\cF}^{\partial}(M_\cF)$
has a natural structure of $q$-difference module (see also \S \ref{subsec:Algebraic constructions}).
We will denote $Constr_{\cF}^{\partial}(\cM_\cF)$
the family of $q$-difference modules obtained in this way.

\begin{defn} \label{defn:diffgengal}
We call {parametrized intrinsic Galois group}
of an object $\cM_\cF=(M_\cF,\Sgq)$ of $Diff(\cF, \sgq)$ the
group defined by
$$
\begin{array}{l}
\ds
Gal^{\partial}(\cM_\cF):=
\Big\{g\in \bfGL(M_{\cF}):
g(N_\cF)\subset N_\cF\,\hbox{~for all $q$-difference submodule}\\
\ds
\hbox{ $ \cN_\cF=( N_\cF,\Sgq)$ contained in an object of $Constr_{\cF}^{\partial}(\cM_\cF)$}\Big\}\subset \bfGL(M_{\cF}).
\end{array}
$$
\end{defn}

Similarly to \S \ref{chap:genericgaloisgroup}, one has to understand the definition above in a functorial
sense. More precisely, $Gal^\partial(\cM_{\cF})$
is a functor from the category of $\partial$-$\cF$-algebras to the category of groups,
that associates  to any $\partial$-$\cF$-algebra $S$, the subgroup
of $\GL(M_{\cF}\otimes S)$ that stabilizes  $N_\cF \otimes S$, for all the $q$-difference submodules $\cN_\cF$ over $\cF$
of any object in $Constr_{\cF}^\partial(\cM_{\cF})$, {after scalar extension to $S$}.

\medskip

The proposition below shows that this functor is representable
and thus defines a $\partial$-$\cF$-group scheme.

\begin{prop}\label{prop:repdiffred}
{The group $Gal^{\partial}(\cM_\cF)$ is a reduced
$\partial$-$\cF$-subgroup scheme of $\bfGL(M_{\cF})$.}
\end{prop}

\begin{proof}
{The proof is a differential analogue of \cite[\S3.2.2.2]{andreens}.}
\end{proof}

\begin{rmk}
The parametrized intrinsic Galois group has also a Tannakian interpretation. In the formalism of
differential Tannakian categories  (see \cite{DifftanOv}), the parametrized Galois group is the group of
differential tensor automorphisms of the forgetful functor of the full  differential Tannakian  subcategory generated  by $\cM_\cF$ in $Diff(\cF, \sgq)$.
\end{rmk}

\medskip
For further reference, we recall (a particular case of) the Ritt-Raudenbush theorem (\cf \cite[Theorem 7.1]{Kapldiffalg}):

\begin{thm}\label{thm:rittraudenbush}
Let $(\cF,\partial)$ be a differential field of characteristic zero.
If $R$ is a reduced differentially  finitely generated $\partial$-$\cF$-algebra  then $R$ is $\partial$-noetherian.
\end{thm}

This means that any ascending chain of \emph{radical} $\partial$-ideals (\ie, radical $\partial$-stable ideals) is
stationary or equivalently that every radical $\partial$-ideal has a finite set of generators as radical $\partial$-ideal
(which in general does not mean that it is a finitely generated $\partial$-ideal).
Theorem \ref{thm:rittraudenbush} combined with Proposition \ref{prop:repdiffred} asserts
 that the parametrized intrinsic Galois group as well as any $\bfGL(\cF^\nu)$  are  $\partial$-noetherian.

The $\partial$-noetherianity of $\bfGL(\cF^\nu)$ implies the following:

\begin{cor}
The parametrized intrinsic Galois group $Gal^{\partial}(\cM_\cF)$ can
be defined as the  stabilizer of a line in a construction of differential algebra of $\cM_{\cF}$.
This line can be chosen so that it is also a $q$-difference submodule of some construction of differential
algebra of $\cM_\cF$.
\end{cor}

\begin{proof}
Since $\bfGL(M_\cF)$ is $\partial$-noetherian, any descending chain of reduced differential subschemes
in $\bfGL(M_\cF)$ is stationary. Then, let $\l\{\cW^{(i)};i\in I_h\r\}_h$ be an ascending chain of
finite sets of $q$-difference submodules contained in some elements of
$Constr^\partial(\cM_{K(x)})$ so that {all $q$-difference submodules contained in a construction of  differential algebra
are contained in some $\l\{\cW^{(i)};i\in I_h\r\}$}. Let
$\cG_h$ be the  $\partial$-$\cF$-subgroup scheme of $\bfGL(M_\cF)$ defined as the stabilizer of $\{\cW^{(i)};i\in I_h\}$.
By Cartier's theorem \cite[\S11.4]{Waterhouse:IntriductionToAffineGroupeSchemes}, the $\cG_h$ are reduced.
Then,  the descending  chain of $\partial$-$\cF$-subgroup schemes $\cG_h$ of $\bfGL(M_\cF)$ is stationary (see previous proposition).
This proves that $Gal^{\partial}(\cM_\cF)$ is the stabilizer of a finite number
of $q$-difference submodules $\cW^{(i)}$, $i\in I$,
contained in some elements of $Constr^\partial(\cM_{K(x)})$.
It follows from a standard argument of linear algebra that
$Gal^{\partial}(\cM_\cF)$ is the stabilizer of the maximal exterior
power of the direct sum of the $\cW^{(i)}$'s (see Remark \ref{rmk:noetherianity}).
\end{proof}

Let
$Gal(\cM_\cF)$ be the intrinsic Galois group defined in the previous chapter and let
$\mathbf{Gal(\cM_\cF)}$ its associated $\partial$-$\cF$-group scheme, defined as in \S\ref{subsec:diffalgeo}.
We have the following inclusion, that we will characterize in a more
precise way in the next pages:

\begin{lemma}\label{lemma:inclgroup}
Let $\cM_\cF$ be an object of $Diff(\cF,\sgq)$.
The following inclusion of $\partial$-$\cF$-group schemes holds
$$Gal^{\partial}(\cM_\cF) \subset \mathbf{Gal(\cM_\cF)}.$$
\end{lemma}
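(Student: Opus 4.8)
The plan is to read the inclusion directly off the defining collections of constructions, since the entire content of the lemma is that imposing \emph{more} stabilization conditions produces a \emph{smaller} group. Recall that $Gal(\cM_\cF,\eta_\cF)$ is the stabilizer inside $Gl(M_\cF)$ of every $\Sgq$-stable $\cF$-subspace of every object of $Constr_\cF(\cM_\cF)$, whereas $Gal^{\partial}(\cM_\cF,\eta_\cF)$ is the stabilizer of every $\Sgq$-stable $\cF$-subspace of every object of $Constr_\cF^{\partial}(\cM_\cF)$. By the very definition of $Constr_\cF^{\partial}(\cM_\cF)$ as the collection of all algebraic constructions of $\cM_\cF$ \emph{together with} those obtained by iterating the prolongation functor $F$, one has the inclusion of collections $Constr_\cF(\cM_\cF)\subseteq Constr_\cF^{\partial}(\cM_\cF)$.

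The first step is to make precise that the two groups act compatibly on the common objects. For a purely algebraic construction $W\in Constr_\cF(\cM_\cF)$, the action of a point $g\in Gl(M_\cF)$ entering the definition of $Gal^{\partial}$ is the usual functorial action of $Gl(M_\cF)$ on constructions of linear algebra, which is exactly the action entering the definition of $Gal$: the functor $F$ only \emph{adds} the prolongation objects $V^{(1)}$ together with the rule $g(\partial^s(v))=\partial^s(g(v))$, and does not alter the action on the algebraic constructions already present. Thus, on objects lying in $Constr_\cF(\cM_\cF)$, the stabilization condition used to define $Gal^{\partial}$ coincides verbatim with the one used to define $Gal$.

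The second step is the conclusion, carried out functorially. For every $\partial$-$\cF$-differential algebra $A$ and every $g\in Gal^{\partial}(\cM_\cF,\eta_\cF)(A)$, by definition $g$ stabilizes $N\otimes_\cF A$ for each sub-$q$-difference module $N$ of each object of $Constr_\cF^{\partial}(\cM_\cF)$. Restricting this property to the sub-$q$-difference modules of objects of $Constr_\cF(\cM_\cF)\subseteq Constr_\cF^{\partial}(\cM_\cF)$, and invoking the compatibility of the actions established in the first step, shows that $g$ stabilizes all the $\Sgq$-stable $\cF$-subspaces defining $Gal(\cM_\cF,\eta_\cF)$. Hence $g\in Gal(\cM_\cF,\eta_\cF)(A)$, and the inclusion $Gal^{\partial}(\cM_\cF,\eta_\cF)\subseteq Gal(\cM_\cF,\eta_\cF)$ of subgroups of $Gl(M_\cF)$ follows.

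I do not expect a genuine obstacle here: the statement is formal and rests solely on the inclusion of the two families of constructions. The only point requiring a line of care — rather than a difficulty — is the bookkeeping in the first step, namely checking that the restriction of the differential functorial action to the algebraic constructions agrees with the ordinary one; this is immediate from the way $F$ and its $Gl(M_\cF)$-action are defined on $Vect_\cF$.
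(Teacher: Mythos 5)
Your proposal is correct and follows exactly the same route as the paper's own proof: both arguments rest on the observation that $Constr_\cF(\cM_\cF)\subseteq Constr_\cF^{\partial}(\cM_\cF)$, so the group cut out by the larger family of stabilization conditions is contained in the one cut out by the smaller family. Your additional check that the functorial $Gl(M_\cF)$-action on the purely algebraic constructions is unchanged by the presence of the prolongation functor is a reasonable piece of bookkeeping that the paper leaves implicit.
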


\begin{rmk}
The inclusion above means that, for all $\partial$-$\cF$-algebra $S$, we have
$Gal^{\partial}(\cM_\cF)(S) \subset \mathbf{Gal(\cM_\cF)(S)}. $
In particular, the parametrized intrinsic Galois group is a $\partial$-$\cF$-subgroup scheme of the intrinsic Galois group, viewed as a $\partial$-$\cF$-group scheme (see \S \ref{subsec:diffalgeo}). Later, for $\cF=K(x)$, we will prove that
$Gal^{\partial}(\cM_\cF)$ is actually Zariski dense in $\mathbf{Gal(\cM_\cF)}$.
\end{rmk}

\begin{proof}
We recall, that the $\cF$-group scheme $Gal(\cM_\cF)$ is defined as the stabilizer in $\GL( M_{\cF})$
of all the subobjects contained in a construction of linear algebra of $\cM_\cF$ .
Because the list of subobjects contained in a construction of differential algebra of $\cM_\cF$ includes
those contained in a construction of linear algebra of $\cM_{\cF}$,
we get the claimed inclusion.
\end{proof}

\section{Characterization of the parametrized intrinsic Galois group by curvatures}
\label{subsec:car0}

From now on we focus on the special case $\cF= K(x)$, where $K$ is a finitely generated extension of $\Q$.
We endow $K(x)$ with the derivation $\partial:= x \frac{d}{dx}$, that commutes with $\sgq$.
We refer to Chapter \ref{chap:A unified statement} for notation.
\par
Let $\cM_{K(x)}=(M_{K(x)},\Sgq)$ be a $q$-difference module.
The differential version of Chevalley's theorem (\cf \cite[Proposition 14]{cassdiffgr}, \cite[Theorem 5.1]{ChevOv}) implies that
any $\partial$-$K(x)$-subgroup scheme $G$ of $\bfGL(M_{K(x)})$
can be defined as the stabilizer of some line
$L_{K(x)}$ contained in a construction of differential algebra $\cW_{K(x)}$ of $\cM_{K(x)}$.
Let $M$ be   a $\Sgq$-stable $\cA$-lattice  of $M_{K(x)}$  for some $q$-difference algebra $\cA$. Up to enlarging $\cA$, one finds   an $\cA$-lattice $L$ of $L_{K(x)}$ and an $\cA$-lattice $W$ of $W_{K(x)}$. The latter is the underlying space of a $q$-difference module $\cW=(W,\Sgq)$ over $\cA$.

\begin{defn}
Let $\wtilde\cC$ be a non-empty cofinite subset of $\cC$ and $(\La_v)_{v\in\wtilde\cC}$
be a family of {invertible $\cA/(\phi_v)$-linear operators acting on $M\otimes_{\cA}\cA/(\phi_v)$, respectively, for any
$v\in\wtilde\cC$}.
We say that {a $\partial$-$K(x)$-group scheme $G=Stab(L_{K(x)})$ over $K(x)$ contains the operators $\La_v$ modulo $\phi_v$,
for almost all $v\in\cC$,}
if for almost all (i.e. for almost all and at least one) $v\in\wtilde\cC$ the operator $\La_v$ stabilizes
$L\otimes_{\cA}\cA/(\phi_v)$ inside $W\otimes_{\cA}\cA/(\phi_v)$:
$$
\La_v\in Stab_{\cA/(\phi_v)}(L\otimes_\cA\cA/(\phi_v)).
$$
\end{defn}

\begin{rmk}
The differential
Chevalley's theorem and the $\partial$-noetherianity of $\GL(M_{K(x)})$ imply that
the notions of a $\partial$-$K(x)$-group scheme  containing the operators $\La_v$ modulo $\phi_v$, for almost all $v\in\cC$, and the
smallest $\partial$-$K(x)$-subgroup scheme  of $\bfGL(M_{K(x)})$ containing
the operators $\La_v$ modulo $\phi_v$, for almost all $v\in\cC$, are well defined.
In particular they are independent of the choice of $\cA$, $\cM$ and $L_{K(x)}$
(See \cite[10.1.2]{DVInv} and Remark \ref{rmk:defn:contains}).
\end{rmk}

The main result of this section is the following:

\begin{thm}\label{thm:diffgenGalois}
The $\partial$-$K(x)$-group scheme $Gal^{\partial}(\cM_{K(x)})$ is the smallest $\partial$-$K(x)$-subgroup scheme of
$\bfGL(M_{K(x)})$ that contains the operators $\Sgq^{\kappa_v}$ modulo
$\phi_v$, for almost all $v\in\cC$.
\end{thm}

\begin{proof}
The lemmas below plus the differential Chevalley theorem
allow to prove Theorem \ref{thm:diffgenGalois} in exactly the same way as
Theorem \ref{thm:genGalois}.
\end{proof}

\begin{lemma}\label{lemma:diffgenGalois}
The $\partial$-$K(x)$-group scheme  $Gal^{\partial}(\cM_{K(x)})$ contains the operators
$\Sgq^{\kappa_v}$ modulo $\phi_v$, for almost all $v\in\cC$.
\end{lemma}

\begin{proof}
The statement follows immediately from the fact that $Gal^{\partial}(\cM_{K(x)})$
can be defined as the stabilizer of a rank one $q$-difference submodule
 of some construction of differential algebra of $\cM_{K(x)}$, which is \emph{a fortiori} stable under the action
of $\Sgq^{\kappa_v}$.
\end{proof}

\begin{lemma}
$Gal^{\partial}(\cM_{K(x)})=\{1\}$ if and only if
$\cM_{K(x)}$ is a trivial $q$-difference module.
\end{lemma}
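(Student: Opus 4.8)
The plan is to mirror the proof of Corollary \ref{cor:trivialgroup}, the analogous statement for the algebraic generic Galois group, reducing both implications to Theorem \ref{thm:GrothKaz} together with the inclusion $Gal^{\partial}(\cM_{K(x)},\eta_{K(x)})\subset Gal(\cM_{K(x)},\eta_{K(x)})$ of Lemma \ref{lemma:inclgroup}. The roles of Lemma \ref{lemma:genGalois} and Corollary \ref{cor:trivialgroup} in the algebraic setting are played here by Lemma \ref{lemma:diffgenGalois} and the present statement.

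For the implication ``$\cM_{K(x)}$ trivial $\Rightarrow Gal^{\partial}(\cM_{K(x)},\eta_{K(x)})=\{1\}$'' I would argue as follows. If $\cM_{K(x)}$ is trivial, then Corollary \ref{cor:trivialgroup} gives $Gal(\cM_{K(x)},\eta_{K(x)})=\{1\}$, and Lemma \ref{lemma:inclgroup} forces $Gal^{\partial}(\cM_{K(x)},\eta_{K(x)})\subset\{1\}$, hence equality. Alternatively, one checks directly that the prolongation functor sends a trivial module to a trivial one: choosing a basis $\ul f$ with $\Sgq\ul f=\ul f$, the prolongation matrix $\begin{pmatrix}A & \partial A \\ 0 & A\end{pmatrix}$ of Remark \ref{rmk:prolongationmatrix} reduces to the identity, so every object of $Constr^{\partial}(\cM_{K(x)})$ is again a trivial $q$-difference module; then $Gal^{\partial}(\cM_{K(x)},\eta_{K(x)})$ is forced to fix all the lines spanned by horizontal vectors, and is therefore trivial.

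For the converse ``$Gal^{\partial}(\cM_{K(x)},\eta_{K(x)})=\{1\}\Rightarrow\cM_{K(x)}$ trivial'' I would invoke Lemma \ref{lemma:diffgenGalois}, which says that $Gal^{\partial}(\cM_{K(x)},\eta_{K(x)})$ contains the operators $\Sgq^{\kappa_v}$ modulo $\phi_v$ for almost all $v\in\cC$. When the group is trivial, this containment must force the reduction of $\Sgq^{\kappa_v}$ to be the identity operator on $M\otimes_{\cA}\cO_K/(\phi_v)$ for almost all $v$, exactly as in the algebraic case: the line defining $\{1\}$ as a differential stabilizer has trivial stabilizer, and the reduction of $\Sgq^{\kappa_v}$ lands in the reduction of this stabilizer. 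Once this is established, Theorem \ref{thm:GrothKaz} immediately yields that $\cM_{K(x)}$ has zero $\kappa_v$-curvature modulo $\phi_v$ for almost all $v$, hence becomes trivial over $K(x)$.

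I expect the only genuine obstacle to be the step just used, namely the passage from ``the trivial differential group contains $\Sgq^{\kappa_v}$ modulo $\phi_v$'' to ``$\Sgq^{\kappa_v}\equiv\mathrm{id}$ modulo $\phi_v$''. This requires a differential analogue of the specialization compatibility of Corollary \ref{cor:genGalois}, i.e. an inclusion of the shape $\Sgq^{\kappa_v}\in Gal^{\partial}(\cM\otimes\cO_K/(\phi_v),\eta_{\cA/(\phi_v)})\subset Gal^{\partial}(\cM,\eta_{\cA})\otimes\cO_K/(\phi_v)$, whose right-hand side is trivial under the hypothesis; the remainder is then formal. I would deliberately avoid appealing to the Zariski density of $Gal^{\partial}(\cM_{K(x)},\eta_{K(x)})$ in $Gal(\cM_{K(x)},\eta_{K(x)})$ (Corollary \ref{cor:diffdens}), since that density is a consequence of Theorem \ref{thm:diffgenGalois}, to which the present lemma is preliminary.
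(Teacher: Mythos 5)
Your proof is correct and follows the paper's own route: the paper's proof of this lemma is literally ``analogous to the proof of Corollary \ref{cor:trivialgroup}, replacing $\langle\cM_{K(x)}\rangle^{\otimes}$ with $\langle\cM_{K(x)}\rangle^{\otimes,\partial}$'', i.e.\ Lemma \ref{lemma:diffgenGalois} plus Theorem \ref{thm:GrothKaz} for one implication, and the stabilization of the lines spanned by constant vectors (or, as you also observe, Lemma \ref{lemma:inclgroup} combined with Corollary \ref{cor:trivialgroup}) for the other. The one ``obstacle'' you isolate --- passing from ``the trivial group contains $\Sgq^{\kappa_v}$ modulo $\phi_v$'' to ``$\Sgq^{\kappa_v}$ induces the identity modulo $\phi_v$'' --- is present, and handled in exactly the same implicit way, in the paper's proof of Corollary \ref{cor:trivialgroup}, so your treatment is if anything more explicit than the original.
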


\begin{proof}
The proof is  a differential Tannakian analogue of \cite[Lemma 9.34]{DVInv}. See \cite{DifftanOv} for the notions of
differential Tannakian categories.
\end{proof}

We obtain the following:

\begin{cor}\label{cor:diffdens}
The Zariski closure of the  parametrized intrinsic Galois group $Gal^{\partial}(\cM_{K(x)})$  coincides
the algebraic intrinsic Galois group $Gal(\cM_{K(x)})$.(see \S \ref{subsec:diffalgeo})
\end{cor}

\begin{proof}
We have seen in Lemma \ref{lemma:inclgroup} that $Gal^{\partial}(\cM_{K(x)})$ is a subgroup
of $\mathbf{Gal}(\cM_{K(x)})$. By Theorem \ref{thm:diffgenGalois} (resp. Theorem \ref{thm:genGalois})
we have that
the intrinsic Galois group $Gal^\partial(\cM_{K(x)})$ (resp. $Gal(\cM_{K(x)})$)
is the smallest  $\partial$-$K(x)$-subgroup scheme  (resp.$K(x)$-subgroup scheme) of
$\GL(M_{K(x)})$ that contains the operators $\Sgq^{\kappa_v}$ modulo
$\phi_v$, for almost all $v\in\cC$. This immediately implies the Zariski density.
\end{proof}

\begin{exa}
The logarithm is solution both a $q$-difference and a differential system:
$$
Y(qx)=\begin{pmatrix}
1&\log q\\ 0&1
\end{pmatrix}Y(x),
\hskip 15 pt
\partial Y(x)=\begin{pmatrix}
0&1\\ 0&0
\end{pmatrix}Y(x).
$$
It is easy to verify that the two systems are integrable in the sense that
$\partial\sgq Y(x)=\sgq\partial Y(x)$ (and therefore that the induced condition on the matrices of the systems is verified).
\par
By iterating the $q$-difference system for any $n\in\Z_{>0}$ we obtain:
$$
Y(q^nx)=\begin{pmatrix}
1&n\log q\\ 0&1
\end{pmatrix}Y(x).
$$
This implies that the parametrized intrinsic Galois group is the subgroup of $\mathbb G_{a,K(x)}$, the additive $\partial$-$K(x)$-group scheme,
defined by the equation $\partial y=0$.
\end{exa}

\section{Parametrized intrinsic
Galois group of a \texorpdfstring{$q$}{q}-difference module over \texorpdfstring{$\C(x)$}{C(x)}, for
\texorpdfstring{$q\not=0,1$}{q different than 0,1}}

We conclude with some remarks on complex $q$-difference modules.
Let
$\cM_{\C(x)}=(M_{\C(x)},\Sgq)$ be a $q$-difference module over
$\C(x)$. We can consider a finitely generated extension of $K$ of
$\Q$ such that there exists a $q$-difference module
$\cM_{K(x)}=(M_{K(x)},\Sgq)$ satisfying
$\cM_{\C(x)}=\cM_{K(x)}\otimes_{K(x)}\C(x)$. We can of course
define, as above, two parametrized intrinsic Galois groups,
$Gal^\partial(\cM_{K(x)})$ and
$Gal^\partial(\cM_{\C(x)})$.
A (differential)
noetherianity argument, that we have already used several times, on
the submodules stabilized by those groups shows the following:

\begin{prop}\label{prop:GaloisgroupsANDfinitegenextension}
In the notations above, we have:
$$
Gal^\partial(\cM_{\C(x)})
\subset Gal^\partial(\cM_{K(x)})\otimes_{K(x)}\C(x).
$$
Moreover there exists a finitely generated extension $K^\p$ of $K$
such that
$$
Gal^\partial(\cM_{K(x)}\otimes_{K(x)}{K^{\p}(x)})\otimes_{K^{\p}(x)}\C(x)\cong
Gal^\partial(\cM_{\C(x)}).
$$
\end{prop}

We can informally rephrase Theorem \ref{thm:diffgenGalois} in the
following way:

\begin{thm}\label{thm:complexmodulesgendiffGalois}
The parametrized intrinsic Galois group $Gal^\partial(\cM_{\C(x)})$ is the smallest
$\partial$-$\C(x)$-subgroup scheme of $\bfGL(M_{\C(x)})$ that
contains a non-empty cofinite set of curvatures of the $q$-difference module $\cM_{K^\p(x)}$.
\end{thm}

\begin{rmk}\label{rmk:comparison2}
In \cite{HardouinSinger}, the authors develop a parametrized  Galois theory for
$q$-difference systems  over $\widetilde{\C}(x)$  where $\widetilde{\C}$ is a differential closure of $\C$. This theory generalizes the Picard-Vessiot theory of \cite{vdPutSingerDifference}. The parametrized Galois theory allows to encode  the differential algebraic relations satisfied by the solutions of a  $q$-difference system in  the geometric structure of a parametrized Galois group. The parametrized Galois group is a $\partial$-$\widetilde{\C}$-group scheme and has a differential Tannakian interpretation (see for instance \cite[\S 5.3]{GilGorchOV}). Combining  arguments of \cite[\S 3.2.2.1]{andreens}  and \cite[Prop. 4.28 and 4.29]{GilGorchOV} one can prove  that  the parametrized intrinsic Galois group of a $q$-difference system over $\C(x)$ becomes isomorphic to the parametrized Galois group  of \cite{HardouinSinger} over a differential closure of $\widetilde{\C}(x)$.
\end{rmk}

\section{The example of the Jacobi Theta function}

Consider the Jacobi Theta function
$$
\Theta(x)=\sum_{n\in\Z}q^{-n(n-1)/2}x^n,
$$
which is solution of the $q$-difference equation
$$
\Theta(qx)=qx\Theta(x).
$$
Iterating the equation, one proves that $\Theta$ satisfies
$y(q^nx)=q^{n(n+1)/2}x^ny(x)$, for any $n\geq 0$.
Therefore we immediately deduce that the intrinsic Galois group of the rank one
$q$-difference module $\cM_{\Theta}=(K(x).\Theta,\Sgq)$, with
$$
\begin{array}{rccc}
\Sgq&:K(x).\Theta&\longrightarrow & K(x).\Theta\\~\\
&f(x)\Theta&\longmapsto &f(qx)qx\Theta
\end{array},
$$
is the whole multiplicative group ${\mathbb G}_{m,K(x)}$.
As far as the parametrized intrinsic Galois group is concerned we have:

\begin{prop}\label{prop:Thetagroups}
The parametrized intrinsic Galois group
$Gal^{\partial}\l(\cM_\Theta\r)$ is defined by
$\partial(\partial(y)/y)=0$.
\end{prop}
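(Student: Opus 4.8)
The plan is to read off $Gal^{\partial}(\cM_\Theta,\eta_{K(x)})$ from its arithmetic characterisation (Theorem \ref{thm:diffgenGalois}) together with the classification of differential algebraic subgroups of the multiplicative group (\cf \cite{cassdiffgr}). Since $\cM_\Theta$ has rank one, the group is a differential algebraic subgroup of $Gl_1={\mathbb G}_{m,K(x)}$, and the whole discussion takes place inside ${\mathbb G}_m$ via the logarithmic derivative homomorphism $\mathrm{ld}\colon{\mathbb G}_m\to{\mathbb G}_a$, $\mathrm{ld}(y)=\partial(y)/y$, which is a differential group morphism whose kernel is the group of $\partial$-constants.

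First I would make the curvatures explicit. Iterating $\Sgq\Theta=qx\Theta$ and using the recursion $A_{n+1}=\sgq(A_n)A_1$ with $A_1=qx$, one gets $\Sgq^n\Theta=A_n\Theta$ with $A_n=q^{n(n+1)/2}x^n$, which is precisely the relation $y(q^nx)=q^{n(n+1)/2}x^ny(x)$ quoted above. Hence for $v\in\cC$ of order $\kappa_v$ the curvature $\Sgq^{\kappa_v}$ acts modulo $\phi_v$ as multiplication by $g_v=c_vx^{\kappa_v}$, where $c_v$ is the reduction of $q^{\kappa_v(\kappa_v+1)/2}$ in $\cO_K/(\phi_v)$. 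The decisive computation is that, since $\partial=x\frac{d}{dx}$ annihilates the constant $c_v$, one has $\mathrm{ld}(g_v)=\partial(g_v)/g_v=\kappa_v$, a $\partial$-constant, so that $g_v$ satisfies $\partial(\partial(y)/y)=0$. Writing $H\subset{\mathbb G}_{m,K(x)}$ for the differential algebraic subgroup defined by $\partial(\partial(y)/y)=0$, that is $H=\mathrm{ld}^{-1}(\{z:\partial z=0\})$, this shows that every curvature lies in $H$, whence $Gal^{\partial}(\cM_\Theta,\eta_{K(x)})\subset H$ by Theorem \ref{thm:diffgenGalois}.

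For the reverse inclusion I would invoke the structure of differential algebraic subgroups of ${\mathbb G}_m$. By Corollary \ref{cor:diffdens} the group $Gal^{\partial}(\cM_\Theta,\eta_{K(x)})$ is Zariski dense in the algebraic generic Galois group, which is the whole ${\mathbb G}_{m,K(x)}$; a Zariski dense differential algebraic subgroup of ${\mathbb G}_m$ is of the form $\mathrm{ld}^{-1}(W)$ for a differential algebraic subgroup $W\subset{\mathbb G}_a$ (\cf \cite{cassdiffgr}). The inclusion in $H$ forces $W\subset\{z:\partial z=0\}$, the field $K$ of $\partial$-constants of $K(x)$. On the other hand $W$ contains $\mathrm{ld}(g_v)=\kappa_v$ for almost all $v$, that is, infinitely many distinct nonzero integers (nonzero because $k$, hence $\cO_K/(\phi_v)$, has characteristic zero). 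Inside the constant subgroup the defining $\partial$-conditions of $W$ collapse to algebraic ones, so $W$ is an algebraic subgroup of ${\mathbb G}_a$; since in characteristic zero the only algebraic subgroups of ${\mathbb G}_a$ are $\{0\}$ and ${\mathbb G}_a$, and $W$ contains nonzero elements, we conclude $W=\{z:\partial z=0\}$ and therefore $Gal^{\partial}(\cM_\Theta,\eta_{K(x)})=H$.

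The step I expect to be the main obstacle is this reverse inclusion: one has to exclude every proper differential algebraic subgroup lying strictly between the subgroup of $\partial$-constant homotheties and $H$. This is exactly what is settled by transporting the problem through $\mathrm{ld}$ to ${\mathbb G}_a$, where the constancy of the $\kappa_v$ turns the differential constraints into purely algebraic ones and the elementary description of algebraic subgroups of ${\mathbb G}_a$ in characteristic zero forces $W$ to be the full constant subgroup.
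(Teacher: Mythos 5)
Your proposal is correct and follows essentially the same route as the paper: the upper bound $Gal^{\partial}(\cM_\Theta,\eta_{K(x)})\subset\{\partial(\partial y/y)=0\}$ comes from the curvatures $c_vx^{\kappa_v}$ via Theorem \ref{thm:diffgenGalois}, and the lower bound is obtained by pushing the group through the logarithmic derivative into $\mathbb G_a$ and using the classification of differential algebraic subgroups of $\mathbb G_m$. Your treatment of the reverse inclusion is in fact more explicit than the paper's one-line ``since the group is not finite it must be all of $\mathbb G_a(K)$'': you separately rule out the degenerate case $W=\{0\}$ by observing that the defining operators of $W$ must annihilate the nonzero integers $\kappa_v$ (modulo $\phi_v$, to be pedantic), which is a genuine and welcome completion of the argument rather than a deviation from it.
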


\begin{proof}
For almost any $v\in\cC$, the reduction modulo $\phi_v$
of $q^{\kappa_v(\kappa_v+1)/2}x^{\kappa_v}$ is the monomial
$x^{\kappa_v}$, which satisfies the equation $\partial\l(\frac{\partial x^{\kappa_v}}{x^{\kappa_v}}\r)=0$.
This means that parametrized intrinsic Galois group $Gal^{\partial}\l(\cM_\Theta \r)$ is a subgroup of
the $\partial$-$K(x)$-subgroup scheme defined by $\partial\l(\frac{\partial y}{y}\r)=0$.
In other words, the logarithmic derivative
$$
\begin{array}{ccc}
\mathbb G_m & \longrightarrow & \mathbb G_a\\
y & \longmapsto & \frac{\partial y}{y}
\end{array}
$$
sends $Gal^{\partial}\l(\cM_\Theta, \eta_{K(x)}\r)$ into a subgroup of the additive group $\mathbb G_{a,K(x)}$ defined by the equation
$\partial z=0$. This is nothing else that $\mathbb G_{a,K}$, whose proper $K$-subgroup scheme  is only $\{0\}$. If the image by the logarithmic derivative
of
$Gal^{\partial}\l(\cM_\Theta \r)$ were $\{0\}$, then the curvatures should be constant with respect to $\partial$.
It is not the case, which ends the proof.
\end{proof}

Let us consider a norm $|~|$ on $K$ such that $|q|\neq 1$.
The differential dimension of the subgroup $\partial\l(\frac{\partial y}{y}\r)=0$ is zero.
Using the comparison of the parametrized Galois group of \cite{HardouinSinger} and the parametrized intrinsic Galois group, we obtain
 that $\Theta$ is differentially algebraic over
the field of rational functions $\wtilde C_E(x)$ with coefficients in the differential closure
$\wtilde C_E$ of the elliptic function over $K^*/q^\Z$.
In fact, the function $\Theta$ satisfies
$$
\sgq\l(\frac{\partial\Theta}{\Theta}\r)=\frac{\partial\Theta}{\Theta}+1,
$$
which implies that $\partial\l(\frac{\partial\Theta}{\Theta}\r)$
is an elliptic function.
Since the Weierstrass function is differentially algebraic over $K(x)$,
the Jacobi Theta function is also differentially algebraic over $K(x)$.
\par
Notice that, if $q$ is transcendental over $\Q$, the derivation $\frac{d}{dq}$ naturally comes into the picture.
Since it intertwines with $\sgq$ in a relatively complicate way, the study of this situation requires a specific approach.
See \cite{diviziohardouinPacific}.

\part{Comparison with the non-linear theory}

\chapter{Preface to Part 4.
The Galois \texorpdfstring{$D$}{D}-groupoid of a \texorpdfstring{$q$}{q}-difference system, by Anne Granier}
\label{chap:anDgroupoid}

We recall here the definition of the Galois $D$-groupoid of a $q$-difference system, and how to recover groups from it
in the case of a linear $q$-difference system. This {preface} thus consists in a summary of Chapter 3 of
\cite{GranierThese}.

\section{Definitions}
\label{sec:defmalgrange}

We need to recall first Malgrange's definition of $D$-groupoids, following \cite{MalgGGF} but specializing it to the
base space $\P^1_\mathbb{C} \times \mathbb{C}^\nu$ as in \cite{GranierThese} and \cite{GranierFourier}, and to explain
how it allows to define a Galois $D$-groupoid for $q$-difference systems.\\

Fix $\nu\in \mathbb{N}^*$, and denote by $M$ the analytic complex variety $\P^1_\mathbb{C} \times \mathbb{C}^\nu$. We call
\textit{local diffeomorphism of $M$} any biholomorphism between two open sets of $M$, and we denote by $Aut(M)$ the
set of germs of local diffeomorphisms of $M$. Essentially, a $D$-groupoid is a subgroupoid of
$Aut(M)$ defined by a system of partial differential equations.\\

Let us precise what is the object which represents the system of partial differential equations in this rough
definition.

A germ of a local diffeomorphism of $M$ is determined by the coordinates denoted by $(x,X)=(x,X_1,\ldots , X_\nu)$ of
its source point, the coordinates denoted by $(\bar{x},\bar{X})=(\bar{x},\bar{X}_1,\ldots , \bar{X}_\nu)$ of its target
point, and the coordinates denoted by $\frac{\partial \bar{x}}{\partial x},\frac{\partial \bar{x}}{\partial X_1},
\ldots ,\frac{\partial \bar{X}_1}{\partial x},\ldots ,\frac{\partial^2 \bar{x}}{\partial x^2}, \ldots$ which represent
its partial derivatives evaluated at the source point. We also denote by $\delta$ the polynomial in the coordinates above,
which represents the
Jacobian of a germ evaluated at the source point. We will allow {ourselves to use} abbreviations for some sets of these coordinates,
as for example $\frac{\partial \bar{X}}{\partial X}$ to represent all the coordinates
$\frac{\partial\bar{X}_i}{\partial X_j}$ and $\partial\bar X$ for all the coordinates
$\frac{\partial\bar{X}_i}{\partial x_j}$,
$\frac{\partial\bar{X}_i}{\partial\bar x_j}$,
$\frac{\partial\bar{X}_i}{\partial X_j}$ and
$\frac{\partial\bar{X}_i}{\partial\bar X_j}$.

We denote by $r$ any positive integer. We call \textit{partial differential equation}, or only \textit{equation}, of
order $\leq r$ any function $E(x,X,\bar{x},\bar{X},\partial \bar{x},\partial \bar{X}, \ldots ,\partial^r
\bar{x},\partial^r \bar{X})$ which locally and holomorphically depends on the source and target coordinates, and
polynomially on $\delta^{-1}$ and on the partial derivative coordinates of order $\leq r$. These equations are
endowed with a sheaf structure on $M \times M$ which we denote by $\mathcal{O}_{J^*_r(M,M)}$. We then denote by
$\mathcal{O}_{J^{*}(M,M)}$ the sheaf of all the equations, that is the direct limit of the sheaves
$\mathcal{O}_{J_{r}^{*}(M,M)} $. It is endowed with natural derivations of the equations with respect to the source
coordinates. For example, one has: $D_{x}.\frac{\partial \bar{X}_{i}}{\partial X_j}=\frac{\partial ^{2}
\bar{X}_{i}}{\partial x \partial X_j}$.

{To formulate the definition of $D$-groupoid, we will consider a pseudo-coherent (in the sense of \cite{MalgGGF})
differential ideal\footnote{We will say everywhere differential ideal for sheaf of differential ideal.}
$\mathcal{I}$ of
$\mathcal{O}_{J^{*}(M,M)}$.} A
\textit{solution} of such an ideal $\mathcal{I}$ is a germ of a local diffeomorphism $g : (M,a) \rightarrow (M,g(a))$ such that, for any equation $E$ of the fiber $\mathcal{I}_{(a,g(a))}$, the function defined by $(x,X) \mapsto
E((x,X),g(x,X),\partial g(x,X),\ldots )$ is null in a neighborhood of $a$ in $M$. The  solutions of
$\mathcal{I}$ is denoted by $sol(\mathcal{I})$ and forms a set groupoid.\\

The set $Aut(M)$ is endowed with a groupoid structure for the composition $c$ and the inversion $i$ of the germs of
local diffeomorphisms of $M$. We thus have to characterize, with the comorphisms $c^*$ and $i^*$ defined on
$\mathcal{O}_{J^{*}(M,M)}$, the systems of partial differential equations $\mathcal{I} \subset
\mathcal{O}_{J^{*}(M,M)}$ whose set of solutions $sol(\mathcal{I})$ is a subgroupoid of $Aut(M)$.

We call \textit{groupoid of order $r$} on $M$ the subvariety of the space of invertible jets of order $r$ defined by
a coherent ideal $\mathcal{I}_{r} \subset
\mathcal{O}_{J_{r}^{*}(M,M)}$ such that:
\begin{itemize}
  \item[\textit{(i)}] all the germs of the identity map of $M$ are solutions of
        $\mathcal{I}_{r}$,
  \item[\textit{(ii)}]
        $c^{*}(\mathcal{I}_{r}) \subset \mathcal{I}_{r} \otimes \mathcal{O}_{J_{r}^{*}(M,M)} + \mathcal{O}_{J_{r}^{*}(M,M)} \otimes \mathcal{I}_{r}$,
  \item[\textit{(iii)}] $\iota ^{*}(\mathcal{I}_{r}) \subset \mathcal{I}_{r}$.
\end{itemize}
The solutions of such an ideal $\mathcal{I}_{r}$ form a
subgroupoid of $Aut(M)$.

\begin{defn}\label{defn:Dgroupoid}
According to \cite{MalgGGF}, a \textit{$D$-groupoid} $\cG$ on $M$ is a
subvariety of the space $(M^2,\cO_{J^*(M,M)})$ of invertible jets defined by a reduced, {pseudo-coherent differential ideal}
$\mathcal{I}_\cG \subset \mathcal{O}_{J^{*}(M,M)}$ such that
\begin{itemize}

\item[\textit{(i')}]
all the germs of the identity map of $M$ are solutions of $\mathcal{I}_\cG$,

\item[\textit{(ii')}]
for any relatively compact open set $U$ of $M$, there exists a closed complex analytic subvariety $Z$ of $U$ of codimension $\geq 1$, and a positive integer $r_{0} \in \mathbb{N}$ such that, for all $r \geq r_{0}$
and denoting by $\mathcal{I}_{\cG,r}= \mathcal{I}_\cG \cap \mathcal{O}_{J_{r}^{*}(M,M)}$,
one has, above $(U \setminus Z)^{2}$: $c^{*}(\mathcal{I}_{\cG,r}) \subset \mathcal{I}_{\cG,r} \otimes
\mathcal{O}_{J_{r}^{*}(M,M)} + \mathcal{O}_{J_{r}^{*}(M,M)} \otimes \mathcal{I}_{\cG,r}$,

\item[\textit{(iii')}]
$\iota ^{*}(\mathcal{I}_\cG) \subset \mathcal{I}_\cG$.
\end{itemize}
\end{defn}

The ideal $\cI_\cG$ totally determines the $D$-groupoid $\cG$, so we will rather focus on the ideal $\cI_\cG$ than its solution $sol(\cI_\cG)$ in $Aut(M)$.
Thanks to the analytic continuation theorem, $sol(\mathcal{I}_\cG)$ is a subgroupoid of $Aut(M)$.\\

The flexibility introduced by Malgrange in his definition of $D$-groupoid allows him to obtain two main results.
Theorem 4.4.1 of \cite{MalgGGF} states that the reduced differential ideal of $\mathcal{O}_{J^{*}(M,M)}$ generated by
a coherent ideal $\mathcal{I}_{r} \subset \mathcal{O}_{J_{r}^{*}(M,M)}$ which satisfies the previous conditions
\textit{(i)},\textit{(ii)}, and \textit{(iii)} defines a $D$-groupoid on $M$. Theorem 4.5.1 of \cite{MalgGGF} states
that for any family of $D$-groupoids on $M$ defined by a family of ideals $\{ \mathcal{G}^{i} \} _{i \in I}$, the ideal $\sqrt{\sum \mathcal{G}^{i}}$ defines a $D$-groupoid on $M$ called \textit{intersection}. The terminology is legitimated by the equality: $sol(\sqrt{\sum \mathcal{G}^{i}}) = \cap_{i \in I} sol(\mathcal{G}^{i})$. This last result allows to define the notion of $D$-envelope of any subgroupoid of $Aut(M)$.\\

Fix $q \in \mathbb{C}^*$, and let $Y(qx)=F(x,Y(x))$ be a (non-linear) $q$-difference system, with $F(x,X) \in \mathbb{C}(x,X)^{\nu}$. Consider the set subgroupoid of $Aut(M)$ generated by the germs of the application $(x,X) \mapsto (qx,F(x,X))$ at any point of $M$ where it is well defined and invertible, and denote it by $Dyn(F)$. The Galois $D$-groupoid
of the $q$-difference system $Y(qx)=F(x,Y(x))$ is the $D$-enveloppe of $Dyn(F)$,
that is the \textit{intersection} of the $D$-groupoids on $M$ whose
set of solutions contains $Dyn(F)$.

\section{A bound for the Galois \texorpdfstring{$D$}{D}-groupoid of a linear \texorpdfstring{$q$}{q}-difference system}
\label{sec:anagrouplin}

For all the following, consider a rational linear $q$-difference system $Y(qx)=A(x)Y(x)$, with $A(x) \in GL_{\nu}(\mathbb{C}(x))$. We denote by $\mathcal{G}al(A(x))$ the Galois $D$-groupoid of this system as defined at the end
of the previous section \ref{sec:defmalgrange}, we denote by $\mathcal{I}_{\mathcal{G}al(A(x))}$ its defining ideal of equations, and by $sol(\mathcal{G}al(A(x)))$ its groupoid of solutions.\\

The elements of the dynamics $Dyn(A(x))$ of $Y(qx)=A(x)Y(x)$ are the germs of the local diffeomorphisms of $M$ of the form $(x,X) \mapsto (q^kx,A_k(x)X)$, with: $$A_{k}(x)= \left\lbrace \begin{array}{ll}  Id_{n} & \text{if } k=0,\\ \prod_{i=0}^{k-1} A(q^{i}x) & \text{if } k \in \mathbb{N}^{*},\\  \prod_{i=k}^{-1}A(q^{i}x)^{-1} & \text{if } k \in -\mathbb{N}^{*}.
\end{array} \right.$$
The first component of these diffeomorphisms is independent on the variables $X$ and depends linearly on the variable $x$, and the second component depends linearly on the variables $X$. These properties can be expressed in terms of partial differential equations. This gives an \textit{upper bound} for the Galois $D$-groupoid $\mathcal{G}al(A(x))$ which is defined in the following proposition.

\begin{prop}
\label{prop:DgroupoideLin}
The coherent ideal:
$$\left\langle \frac{\partial \bar{x}}{\partial X} , \frac{\partial \bar{x}}{\partial x}x-\bar{x} ,
\partial ^{2}\bar{x}, \frac{\partial \bar{X}}{\partial X}X-\bar{X} , \frac{\partial ^{2} \bar{X}}{\partial X^{2}}
\right\rangle \subset \mathcal{O}_{J^*_2(M,M)}$$
satisfies the conditions \textit{(i)},\textit{(ii)}, and \textit{(iii)} of \ref{sec:defmalgrange}. Hence, thanks to Theorem 4.4.1 of \cite{MalgGGF}, the reduced differential ideal $\mathcal{I}_{\mathcal{L}in}$ it generates defines a $D$-groupoid $\mathcal{L}in$. Its solutions $sol(\mathcal{L}in)$ are the germs of the local diffeomorphisms of $M$ of the form: $$(x,X) \mapsto (\alpha x,\beta (x)X),$$ with $\alpha \in \mathbb{C}^*$ and locally, $\beta (x) \in GL_\nu(\mathbb{C})$ for all $x$.\\ They contain $Dyn(A(x))$, and therefore, given the definition of $\mathcal{G}al(A(x))$, one has the inclusion $$\mathcal{G}al(A(x)) \subset \mathcal{L}in,$$ which means that:
$$\mathcal{I}_{\mathcal{L}in} \subset \mathcal{I}_{\mathcal{G}al(A(x))} \, \, \text{ and } \, \,
sol(\mathcal{G}al(A(x))) \subset sol(\mathcal{L}in).$$
\end{prop}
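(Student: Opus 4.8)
The plan is to proceed in three stages: first to integrate the five defining equations and identify exactly the germs they cut out, then to verify Malgrange's groupoid conditions (i), (ii), (iii) so that Theorem 4.4.1 of \cite{MalgGGF} applies, and finally to deduce $\mathcal{G}al(A(x))\subset\mathcal{L}in$ from the definition of the $D$-envelope.

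First I would read off the solution set of the order-two ideal $\mathcal{I}_2$ generated by the listed equations. A germ $(x,X)\mapsto(\bar x,\bar X)$ satisfies $\frac{\partial\bar x}{\partial X}=0$ exactly when $\bar x$ depends on $x$ alone; combined with the Euler relation $x\frac{\partial\bar x}{\partial x}=\bar x$ and $\partial^2\bar x=0$, this forces $\bar x=\alpha x$ for a constant $\alpha$, necessarily nonzero by invertibility. Likewise $\frac{\partial^2\bar X}{\partial X^2}=0$ says $\bar X$ is affine in $X$, while the Euler relation $\frac{\partial\bar X}{\partial X}X=\bar X$ kills the constant term, so $\bar X=\beta(x)X$ with $\beta(x)$ a matrix of holomorphic functions of $x$ only (no generator constrains $\frac{\partial\bar X}{\partial x}$), invertible wherever the germ is a diffeomorphism. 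Conversely every such germ annihilates the five generators by direct substitution. Hence $sol(\mathcal{I}_2)$ is precisely the family $(x,X)\mapsto(\alpha x,\beta(x)X)$ with $\alpha\in\mathbb{C}^*$ and $\beta(x)\in GL_\nu(\mathbb{C})$ locally.

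Next I would check the three conditions of \S\ref{subsec:defmalgrange}. Coherence is clear, the ideal being finitely generated by polynomial expressions in the jet coordinates. Condition (i) is immediate: substituting the identity germ $\bar x=x$, $\bar X=X$ annihilates each generator. For (ii) and (iii) the guiding fact is that the family above is closed under composition and inversion as a set-theoretic subgroupoid of $Aut(M)$: composing $(x,X)\mapsto(\alpha x,\beta(x)X)$ with $(x,X)\mapsto(\alpha' x,\beta'(x)X)$ yields $(x,X)\mapsto(\alpha'\alpha x,\beta'(\alpha x)\beta(x)X)$, again of the same shape, and the inverse is $(\bar x,\bar X)\mapsto(\alpha^{-1}\bar x,\beta(\alpha^{-1}\bar x)^{-1}\bar X)$. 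To convert this into the ideal-theoretic statements I would compute the comorphisms $c^*$ and $\iota^*$ on the generators by the chain rule on the jet coordinates, showing that $c^*$ of each generator lies in $\mathcal{I}_2\otimes\mathcal{O}_{J^*_2(M,M)}+\mathcal{O}_{J^*_2(M,M)}\otimes\mathcal{I}_2$ and $\iota^*$ of each generator lies in $\mathcal{I}_2$, over the complement of the codimension $\geq 1$ locus where the relevant Jacobian, $\alpha$, or $\det\beta$ degenerates. I expect this verification of (ii) — handling the comorphism and the second-order chain rule while keeping track of the exceptional locus — to be the main technical obstacle.

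Finally, Theorem 4.4.1 of \cite{MalgGGF} produces the $D$-groupoid $\mathcal{L}in$ defined by the reduced differential ideal $\mathcal{I}_{\mathcal{L}in}$ generated by $\mathcal{I}_2$. Passing to the differential and radical closure does not enlarge the solution set: a germ annihilating the five generators along its graph also annihilates all their source-derivatives (differentiate the identity $E(\,\cdot\,,g,\partial g,\dots)\equiv 0$) and every equation in the radical (since $f^n\equiv 0$ forces $f\equiv 0$ for holomorphic $f$), so $sol(\mathcal{L}in)=sol(\mathcal{I}_2)$. Because each element of $Dyn(A(x))$ has the form $(x,X)\mapsto(q^kx,A_k(x)X)$ with $A_k(x)\in GL_\nu(\mathbb{C}(x))$, it belongs to $sol(\mathcal{L}in)$, whence $Dyn(A(x))\subset sol(\mathcal{L}in)$. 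As $\mathcal{G}al(A(x))$ is by definition the $D$-envelope, the intersection of all $D$-groupoids whose solutions contain $Dyn(A(x))$, this yields $\mathcal{G}al(A(x))\subset\mathcal{L}in$, that is $\mathcal{I}_{\mathcal{L}in}\subset\mathcal{I}_{\mathcal{G}al(A(x))}$ and $sol(\mathcal{G}al(A(x)))\subset sol(\mathcal{L}in)$.
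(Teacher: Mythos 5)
Your argument is correct and follows the same route as the paper, which itself only sketches this verification and defers the details to Proposition 3.2.1 of Granier's thesis: integrate the five generators to identify $sol(\mathcal{L}in)$ as the germs $(x,X)\mapsto(\alpha x,\beta(x)X)$, check Malgrange's conditions (i)--(iii) so that Theorem 4.4.1 of \cite{MalgGGF} applies, and conclude from the definition of the $D$-envelope. The one step you leave as a plan, the chain-rule computation of $c^*$ and $\iota^*$ on the generators for conditions (ii) and (iii), is exactly the computation carried out in the cited thesis, so nothing is missing in substance.
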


\begin{proof}
\textit{cf} proof of Proposition 3.2.1 of \cite{GranierThese} for more details.
\end{proof}

\begin{rmk}
Given their shape, the solutions of $\mathcal{L}in$ are naturally defined in neighborhoods of transversals $\left\lbrace x_a \right\rbrace \times \mathbb{C}^{\nu}$ of $M$. Actually, consider a particular element of $sol(\mathcal{L}in)$, that is precisely a germ at a point $(x_a,X_a) \in M$ of a local diffeomorphism $g$ of $M$ of the form $(x,X) \mapsto (\alpha x,\beta (x)X)$. Consider then a neighborhood $\Delta$ of $x_a$ in $P^1\mathbb{C}$ where the matrix $\beta (x)$ is well defined and invertible, consider the ``cylinders'' $T_s=\Delta \times \mathbb{C}^{\nu}$ and $T_t=\alpha \Delta \times \mathbb{C}^{\nu}$ of $M$, and the diffeomorphism $\tilde{g} : T_s \rightarrow T_t$ well defined by $(x,X) \rightarrow (\alpha x,\beta (x)X)$. Therefore, according to the previous Proposition \ref{prop:DgroupoideLin}, all the germs of $\tilde{g}$ at the points of $T_s$ are in $sol(\mathcal{L}in)$ too.
\end{rmk}

The defining ideal $\mathcal{I}_{\mathcal{L}in}$ of the bound $\mathcal{L}in$ is generated by very simple equations. This allows to reduce modulo $\mathcal{I}_{\mathcal{L}in}$ the equations of $\mathcal{I}_{\mathcal{G}al(A(x))}$ and obtain some
simpler representative equations, in the sense that they only depend on some variables.

\begin{prop}
\label{LemmeReductionModLin}
Let $r \geq 2$. For any equation $E \in \mathcal{I}_{\mathcal{G}al(A(x))}$ of order $r$, there
exists an invertible element $u \in \mathcal{O}_{J^{*}_r(M,M)}$, an equation $L \in
\mathcal{I}_{\mathcal{L}in}$ of order $r$, and an equation $E_{1} \in \mathcal{I}_{\mathcal{G}al(A(x))}$ of order $r$
only depending on the variables written below, such that:
$$
uE=L+E_{1}
\l(x,X,\frac{\partial \bar{x}}{\partial
x},\frac{\partial \bar{X}}{\partial X},\frac{\partial^{2} \bar{X}}{\partial x\partial X},\ldots \frac{\partial^{r}
\bar{X}}{\partial x^{r-1}\partial X}\r).
$$
\end{prop}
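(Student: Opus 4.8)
The plan is to prove the statement as a reduction, or division, of $E$ modulo the defining ideal $\mathcal{I}_{\mathcal{L}in}$ of the bound $\mathcal{L}in$, whose generators, displayed in Proposition \ref{prop:DgroupoideLin}, are remarkably simple. First I would record, besides the five generators, all their total derivatives $D_x^{a}D_{X_j}^{b}$ of order $\leq r$, which again lie in $\mathcal{I}_{\mathcal{L}in}$ because the latter is a differential ideal. This produces a family of relations, each of which expresses one ``unwanted'' jet coordinate as a polynomial in the source coordinates $x,X$ and in the coordinates retained in the statement.

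Then I would organize the elimination according to the three blocks of generators. The generators $\frac{\partial\bar x}{\partial X}$, $\frac{\partial\bar x}{\partial x}x-\bar x$, $\partial^{2}\bar x$ (with their prolongations) annihilate every derivative of $\bar x$ except $\frac{\partial\bar x}{\partial x}$ and replace $\bar x$ by $\frac{\partial\bar x}{\partial x}\,x$; the generator $\frac{\partial^{2}\bar X}{\partial X^{2}}$ and its prolongations annihilate every derivative of $\bar X$ carrying at least two differentiations in the $X$-variables; and $\frac{\partial\bar X}{\partial X}X-\bar X$ replaces $\bar X$ by $\frac{\partial\bar X}{\partial X}X$, while its successive $x$-prolongations
\[
D_x^{a}\l(\tfrac{\partial\bar X_i}{\partial X}X-\bar X_i\r)
=\sum_j\tfrac{\partial^{a+1}\bar X_i}{\partial x^{a}\partial X_j}X_j-\tfrac{\partial^{a}\bar X_i}{\partial x^{a}}
\]
express the pure $x$-derivatives $\frac{\partial^{a}\bar X_i}{\partial x^{a}}$ through the admissible coordinates $\frac{\partial^{a+1}\bar X_i}{\partial x^{a}\partial X_j}$. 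After these substitutions the surviving coordinates are exactly $x$, $X$, $\frac{\partial\bar x}{\partial x}$, $\frac{\partial\bar X}{\partial X}$ and the $\frac{\partial^{k}\bar X}{\partial x^{k-1}\partial X}$ with $k\leq r$, that is, precisely the list in the statement.

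The invertible factor $u$ serves to clear the only non-polynomial ingredient, namely $\delta^{-1}$: since $\delta$ is a unit of $\mathcal{O}_{J^{*}_r(M,M)}$, multiplying $E$ by a sufficiently large power $u=\delta^{N}$ turns it into an equation that is polynomial in the derivative coordinates, to which the polynomial substitutions above apply verbatim. Writing the outcome as $uE=L+E_1$, where $L$ gathers the ideal elements subtracted during the reduction, one gets $L\in\mathcal{I}_{\mathcal{L}in}$ and $E_1$ depending only on the admissible coordinates. Finally $E_1\in\mathcal{I}_{\mathcal{G}al(A(x))}$ follows at once: $uE\in\mathcal{I}_{\mathcal{G}al(A(x))}$ ($u$ being a section of $\mathcal{O}_{J^{*}_r(M,M)}$), while $L\in\mathcal{I}_{\mathcal{L}in}\subset\mathcal{I}_{\mathcal{G}al(A(x))}$, the last inclusion being the content of $\mathcal{G}al(A(x))\subset\mathcal{L}in$ in Proposition \ref{prop:DgroupoideLin}.

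The delicate point, and the step I expect to be the main obstacle, is the control of the order: one must guarantee that $L$ and $E_1$ stay within order $r$, i.e. that each coordinate of order $\leq r$ is removed by a relation of order $\leq r$. For the $\bar x$-block and for the multi-$X$ derivatives this is clear. For the pure $x$-derivatives the relation above has order $a+1$, so the elimination is harmless for $a\leq r-1$ but meets the top-order coordinate $\frac{\partial^{r}\bar X}{\partial x^{r}}$ exactly at the boundary; treating this borderline case without leaving order $r$ is the heart of the argument, and is where the coherence of $\mathcal{I}_{\mathcal{L}in}$ (condition (ii') of Definition \ref{defn:Dgroupoid}, valid off a subvariety of codimension $\geq 1$) together with the freedom in the choice of the unit $u$ must be used carefully. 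A second, more technical issue is that the substitutions $\bar x\mapsto\frac{\partial\bar x}{\partial x}x$ and $\bar X\mapsto\frac{\partial\bar X}{\partial X}X$ act on the holomorphic, rather than polynomial, dependence of $E$ on the target coordinates; one has to check that, after multiplication by $u$ and reduction, the outcome is still a genuine equation of $\mathcal{O}_{J^{*}_r(M,M)}$ involving only the prescribed variables.
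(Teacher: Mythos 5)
Your proposal is essentially the paper's own proof, which likewise takes $u$ to be a suitable power of $\delta$ and performs successive divisions of $uE$ (and its remainders) by the generators of $\mathcal{I}_{\mathcal{L}in}$ and their total derivatives; the paper gives no more detail than this and refers to Proposition 3.2.3 of Granier's thesis for the bookkeeping. The borderline point you isolate --- eliminating $\frac{\partial^{r}\bar{X}}{\partial x^{r}}$ without leaving order $r$ --- is precisely the detail the paper also leaves to the thesis, so your write-up matches, and indeed exceeds, the level of justification given here.
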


\begin{proof}
The invertible element $u$ is a power of $\delta$. The proof consists then in performing the divisions of
the equation $uE$, and then its successive remainders, by the generators of $\mathcal{I}_{\mathcal{L}in}$.
More details are given in the proof of Proposition 3.2.3 of \cite{GranierThese}.
\end{proof}

\section{Groups from the Galois \texorpdfstring{$D$}{q}-groupoid of a linear \texorpdfstring{$q$}{q}-difference system}

We are going to prove that the solutions of the Galois $D$-groupoid $\mathcal{G}al(A(x))$ are, like the solutions of the bound $\mathcal{L}in$, naturally defined in neighborhoods of transversals of $M$. This property, together with the
groupoid structure of $sol(\mathcal{G}al(A(x)))$, allows to exhibit groups from the solutions of $\mathcal{G}al(A(x))$
which fix the transversals.\\

According to Proposition \ref{prop:DgroupoideLin}, an element of $sol(\mathcal{G}al(A(x)))$ is also an element of $sol(\mathcal{L}in)$. Therefore, it is a germ at a point $a=(x_a,X_a) \in M$ of a local diffeomorphism $g :(M,a)
\rightarrow (M,g(a))$ of the form $(x,X) \mapsto (\alpha x,\beta (x)X)$, such that, for any equation $E
\in \mathcal{I}_{\mathcal{G}al(A(x))}$, one has $E((x,X),g(x,X),\partial g(x,X),\ldots )=0$ in a neighborhood
of $a$ in $M$.

Consider an open connected neighborhood $\Delta$ of $x_a$ in $\P^1_\mathbb{C}$ on which the matrix $\beta$ is
well-defined and invertible, that is where $\beta$ can be prolongated in a matrix $\beta \in GL_\nu(
\mathcal{O}(\Delta))$. Consider the ``cylinders'' $T_s=\Delta \times \mathbb{C}^\nu$ and $T_t=\alpha \Delta \times \mathbb{C}^\nu$ of $M$, and the
diffeomorphism $\tilde{g} : T_s \rightarrow T_t$ defined by $(x,X) \rightarrow (\alpha x,\beta (x)X)$.

\begin{prop}
\label{DefSolTransversale}
The germs at all points of $T_s$ of the diffeomorphism $\tilde{g}$ are elements of $sol(\mathcal{G}al(A(x)))$.
\end{prop}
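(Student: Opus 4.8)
The plan is to prove that the subset $S\subset T_s$ consisting of the points $b\in T_s$ at which the germ of $\tilde g$ is a solution of $\mathcal{G}al(A(x))$ is simultaneously open and closed. Since $T_s=\Delta\times\mathbb{C}^\nu$ is connected ($\Delta$ being a connected neighbourhood of $x_a$ and $\mathbb{C}^\nu$ connected) and since $a\in S$ by hypothesis, this forces $S=T_s$, which is exactly the assertion. Throughout I would use that, by Proposition \ref{prop:DgroupoideLin}, the germ of $\tilde g$ at \emph{every} point of $T_s$ is already a solution of $\mathcal{L}in$, because $\tilde g$ has the shape $(x,X)\mapsto(\alpha x,\beta(x)X)$ with $\alpha\in\mathbb{C}^*$ and $\beta\in GL_\nu(\mathcal{O}(\Delta))$. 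For an equation $E$ I abbreviate by $\Phi_E$ the function $(x,X)\mapsto E\big((x,X),\tilde g(x,X),\partial\tilde g(x,X),\dots\big)$, so that $b\in S$ means precisely that $\Phi_E=0$ near $b$ for every $E$ in the fiber of $\mathcal{I}_{\mathcal{G}al(A(x))}$ at $(b,\tilde g(b))$.

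For the open part I would invoke that a $D$-groupoid ideal is, locally, the radical differential ideal generated by a coherent ideal of some bounded order $r_0$ (this is the content of Malgrange's construction underlying Definition \ref{defn:Dgroupoid}). Fixing $b\in S$, I choose near $(b,\tilde g(b))$ finitely many generating equations $E^{(1)},\dots,E^{(m)}$ of order $\le r_0$. Since $b\in S$, each $\Phi_{E^{(l)}}$ vanishes on a neighbourhood $U$ of $b$ in $M$; differentiating these identically zero functions with respect to the source coordinates gives $\Phi_{D^\alpha E^{(l)}}=\partial^\alpha\Phi_{E^{(l)}}=0$ on $U$ for every total derivative $D^\alpha$. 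Because any element of the fiber of $\mathcal{I}_{\mathcal{G}al(A(x))}$ at a nearby point $(b',\tilde g(b'))$ is a combination of the $D^\alpha E^{(l)}$, its value on the jet of $\tilde g$ vanishes near $b'$, so a whole neighbourhood of $b$ lies in $S$.

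For the closed part, let $b\in\overline S\cap T_s$ and let $E$ be an equation of order $r$ in the fiber at $(b,\tilde g(b))$. By Proposition \ref{LemmeReductionModLin} I write $uE=L+E_1$ with $u$ invertible, $L\in\mathcal{I}_{\mathcal{L}in}$, and $E_1\in\mathcal{I}_{\mathcal{G}al(A(x))}$ of order $r$ depending only on $x,X,\frac{\partial\bar x}{\partial x}$ and the derivatives $\frac{\partial^{j}\bar X}{\partial x^{j-1}\partial X}$ for $1\le j\le r$. Since $\tilde g\in sol(\mathcal{L}in)$ everywhere and $u$ is invertible, near $b$ one has $\Phi_E=0$ if and only if $\Phi_{E_1}=0$. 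The decisive observation is that, evaluated on $\tilde g$, this reduced equation becomes $\Phi_{E_1}(x,X)=E_1\big(x,X,\alpha,\beta(x),\beta'(x),\dots,\beta^{(r-1)}(x)\big)$, a function holomorphic on a connected neighbourhood $V$ of $b$ in $T_s$: indeed $\beta$ is holomorphic and invertible on all of $\Delta$ and $\alpha\neq0$, so the Jacobian $\alpha\det\beta(x)$ never vanishes on $T_s$ and $\delta^{-1}$ remains holomorphic. As $b\in\overline S$ and $S$ is open, $S\cap V$ is a nonempty open subset of $V$ on which $\Phi_{E_1}$ vanishes (there $E_1$ restricts to an element of the fiber at $(b',\tilde g(b'))$ and $\tilde g$ is a solution). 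The identity theorem on the connected set $V$ then yields $\Phi_{E_1}\equiv0$, hence $\Phi_E=0$ near $b$; as $E$ was arbitrary, $b\in S$.

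The main difficulty will be this last step. One must be certain that, after the reduction modulo $\mathcal{L}in$, the functions $\Phi_{E_1}$ are genuinely holomorphic on a connected piece of the cylinder $T_s$ and therefore cannot vanish on an open subset without vanishing identically. This is exactly why Proposition \ref{LemmeReductionModLin} is indispensable: it replaces $E$ by an equation whose evaluation on $\tilde g$ involves only the source variables and the $x$-jet of the single globally defined, invertible matrix $\beta$, and it is the invertibility of $\beta$ throughout $\Delta$ together with $\alpha\neq0$ that keep $\delta^{-1}$ finite and let the identity theorem transport the vanishing of $\Phi_{E_1}$ from the open set $S\cap V$ to the point $b$.
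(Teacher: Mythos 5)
Your open--closed argument on the connected cylinder $T_s$ is a legitimate alternative to the paper's proof, which proceeds differently: it fixes an order $r$, covers a path from $a$ to an arbitrary point $a_1\in T_s$ by finitely many open sets on which the coherent sheaf $(\mathcal{I}_{\mathcal{G}al(A(x))})_r$ admits finitely many local generators, and propagates the vanishing of these generators on the jet of $\tilde g$ from one open set to the next by analytic continuation; since this is done for each $r$ separately, only pseudo-coherence (coherence of each $\mathcal{I}_r$) is ever used. Your closedness step is correct, but the detour through Proposition \ref{LemmeReductionModLin} is not actually needed there: for any equation $E$ defined near $(b,\tilde g(b))$, the function $\Phi_E$ is already holomorphic on a connected neighbourhood of $b$ in $T_s$, because $E$ is holomorphic in the source and target coordinates and polynomial in the derivative coordinates and in $\delta^{-1}$, and the Jacobian $\alpha\det\beta(x)$ of $\tilde g$ vanishes nowhere on $T_s$; so the identity theorem applies to $\Phi_E$ directly. (The reduction lemma is genuinely needed only later, in Lemma \ref{lemma:ev}.)

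The step you should treat with more care is openness. You invoke the claim that $\mathcal{I}_{\mathcal{G}al(A(x))}$ is, locally at every point, the radical differential ideal generated by a coherent ideal of bounded order $r_0$. That is not part of Definition \ref{defn:Dgroupoid}: pseudo-coherence only asserts that each $\mathcal{I}_r$ is coherent, and the one bounded-order clause in the definition, condition \textit{(ii')}, concerns the comorphism $c^*$ and is only required to hold outside a hypersurface $Z$. Without a genuine finite-generation theorem valid at every point, your set $S$ is only a countable intersection of open sets and is not obviously open. The repair is cheap and brings you back in line with what the paper uses: run the open--closed argument separately for each $S_r=\{b\in T_s:\ \hbox{the germ of }\tilde g\hbox{ at }b\hbox{ is a solution of }(\mathcal{I}_{\mathcal{G}al(A(x))})_r\}$. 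Openness of $S_r$ follows from coherence of $\mathcal{I}_r$ alone (finitely many local generators of the fixed order $r$, no total derivatives required), closedness follows from your identity-theorem argument applied to those generators, and since each $S_r$ contains $a$ and $T_s$ is connected one gets $S=\bigcap_r S_r=T_s$. With this modification your proof is complete, and it is essentially an order-by-order, connectedness reorganization of the paper's path-continuation argument.
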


\begin{proof}
For all $r \in \mathbb{N}$, the ideal $(\mathcal{I}_{\mathcal{G}al(A(x))})_{r} = \mathcal{I}_{\mathcal{G}al(A(x))} \cap \mathcal{O}_{J^*_{r}(M,M)}$ is coherent. Thus, for any point $(y_0,\bar{y}_0) \in M^2$, there exists an open neighbourhood $\Omega$ of
$(y_0,\bar{y}_0)$ in $M^2$, and equations $E_1^{\Omega}, \ldots ,E_l^{\Omega}$ of $(\mathcal{I}_{\mathcal{G}al(A(x))})_{r}$ defined on
the open set $\Omega$ such that: $$\left( (\mathcal{I}_{\mathcal{G}al(A(x))})_{r} \right) _{|\Omega }=\left(
\mathcal{O}_{J^*_{r}(M,M)} \right) _{|\Omega } E_1^{\Omega} + \cdots + \left( \mathcal{O}_{J^*_{r}(M,M)} \right)
_{|\Omega } E_l^{\Omega}.$$
Let $a_1 \in T_s=\Delta \times \mathbb{C}^{\nu}$. Let $\gamma : [0,1] \rightarrow T_s$ be a path in $T_s$ such that
$\gamma (0)=a$ and $\gamma (1)=a_1$. Let $\left\lbrace \Omega_0, \ldots ,\Omega_N \right\rbrace$ be a finite covering
of the path $\gamma ([0,1]) \times \tilde{g}(\gamma ([0,1]))$ in $T_s \times T_t$ by connected open sets $\Omega_i
\subset (T_s \times T_t)$ like above, and such that the origin $(\gamma (0),g(\gamma (0)))=(a,g(a))$ belongs to
$\Omega_0$.\\
The germ of $g$ at the point $a$ is an element of $sol(\mathcal{G}al(A(x)))$. Therefore, one has
$E_k^{\Omega_0}((x,X),g(x,X),\partial g(x,X),\ldots )\equiv 0$ a neighbourhood of $a$, for all $1 \leq k
\leq l$. Moreover, by analytic continuation, one has also $E_k^{\Omega_0}(x,X,\tilde{g}(x,X),\partial
\tilde{g}(x,X),\ldots ) \equiv 0$  on the source projection of $\Omega_0$ in $M$. It means that the germs of
$\tilde{g}$ at any point of the source projection of $\Omega_0$ are solutions of $(\mathcal{I}_{\mathcal{G}al(A(x))})_{r}$.\\
Then, step by step, one gets that the germs of $\tilde{g}$ at any point of the source projection of $\Omega_k$ are
solutions of $(\mathcal{I}_{\mathcal{G}al(A(x))})_{r}$ and, in particular, the germ of $\tilde{g}$ at the point $a_1$ is also a solution of $(\mathcal{I}_{\mathcal{G}al(A(x))})_{r}$.
\end{proof}

\noindent
This Proposition \ref{DefSolTransversale} means that any solution of the Galois $D$-groupoid
$\mathcal{G}al(A(x))$ is naturally defined in a neighbourhood of a transversal of $M$, above.

\begin{rmk}
In some sense, the ``equations'' counterpart of this proposition is Lemma \ref{lemma:ev}.
\end{rmk}

The solutions of $\mathcal{G}al(A(x))$ which fix the transversals of $M$ can be interpreted as solutions of a
sub-$D$-groupoid of $\mathcal{G}al(A(x))$, partly because this property can be interpreted in terms of partial
differential equations. Actually, a germ of a diffeomorphism of $M$ fix the transversals of $M$ if and only if it is a
solution of the equation $\bar{x}-x$.

The ideal of $\mathcal{O}_{J^*_0(M,M)}$ generated by the equation $\bar{x}-x$ satisfies the conditions
\textit{(i)},\textit{(ii)}, and \textit{(iii)} of \ref{sec:defmalgrange}. Hence, thanks to Theorem 4.4.1 of \cite{MalgGGF}, the reduced differential ideal it generates defines a $D$-groupoid:

\begin{defn}\label{defn:trv}
We call $\mathcal{T}rv$ the $D$-groupoid generated by the equation $\bar{x}-x$.
\end{defn}

Its solutions, $sol(\mathcal{T}rv)$, are the germs of the local diffeomorphisms of $M$ of the form: $(x,X) \mapsto (x,\bar{X}(x,X))$.

\begin{defn}\label{defn:tildegal}
We call $\wtilde{\mathcal{G}al(A(x))}$ the \textit{intersection} $D$-groupoid  $\mathcal{G}al(A(x)) \cap \mathcal{T}rv$,
in the sense of Theorem 4.5.1 of \cite{MalgGGF}, whose defining ideal of equations $\mathcal{I}_{\wtilde{\mathcal{G}al(A(x))}}$ is generated by $\mathcal{I}_{\mathcal{G}al(A(x))}$ and $\mathcal{I}_{\mathcal{T}rv}$.
\end{defn}

\begin{rmk}
Let $x_0 \in \P^1_\mathbb{C}$. Since by Proposition \ref{prop:DgroupoideLin}, the solutions of $\mathcal{G}al(A(x))$ are solutions of $\mathcal{L}in$,  the solutions of $sol(\wtilde{\mathcal{G}al(A(x))})$  defined in  a neighborhood of the transversal $\left\lbrace x_0 \right\rbrace \times \mathbb{C}^\nu$ are of the form $(x,X) \mapsto (x,\beta (x)X)$ where $\beta(x) \in \GL_n(\mathbb{C}\left\lbrace x-x_0 \right\rbrace )$.  The groupoid
structure is given by composition of  germs of diffeomorphism with compatible source and target points. Let $ \phi_1: (x,X) \mapsto (x,\beta_1 (x)X)$ and $ \phi_2: (x,X) \mapsto (x,\beta_2 (x)X)$ two solutions defined  in  a neighborhood of the transversal. In a neighborhood of $\left\lbrace x_0 \right\rbrace \times \mathbb{C}^\nu$, where these germs are both defined, one can compose them and find a new germ of solution  $\phi_2 \circ \phi_1 $, that is given by $(x,X) \mapsto (x, \beta_2(x)\beta_1 (x)X)$. The following proposition resume this discussion for which one can find a more detailed proof in \cite[Proposition 3.3.2]{GranierThese}.
\end{rmk}

\begin{prop}\label{prop:galt}
Let $x_0 \in \P^1_\mathbb{C}$. The set of solutions of $\wtilde{\mathcal{G}al(A(x))}$ defined in a neighborhood of the
transversal $\left\lbrace x_0 \right\rbrace \times \mathbb{C}^\nu$ of $M$ can be identified with a
subgroup of $GL_\nu(\mathbb{C}\left\lbrace x-x_0 \right\rbrace )$.
\end{prop}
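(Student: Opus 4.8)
The plan is to write down explicitly the solutions of $\wtilde{\mathcal{G}al(A(x))}$ living near the transversal, to attach to each of them a matrix $\beta \in GL_\nu(\mathbb{C}\{x-x_0\})$, and to check that this assignment is an injective map whose image is a subgroup. First I would recall that, by Proposition \ref{prop:DgroupoideLin} together with the equality $\wtilde{\mathcal{G}al(A(x))} = \mathcal{G}al(A(x)) \cap \mathcal{T}rv$, every solution of $\wtilde{\mathcal{G}al(A(x))}$ is a germ of a local diffeomorphism of the form $(x,X) \mapsto (x, \beta(x)X)$. Indeed, the inclusion in $\mathcal{L}in$ forces the shape $(x,X) \mapsto (\alpha x, \beta(x)X)$ with $\beta$ locally in $GL_\nu(\mathbb{C})$, while the intersection with $\mathcal{T}rv$ imposes $\bar{x}=x$, hence $\alpha = 1$. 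By Proposition \ref{DefSolTransversale}, such a solution extends to the diffeomorphism $\tilde{g}\colon (x,X) \mapsto (x, \beta(x)X)$ of a cylinder $\Delta \times \mathbb{C}^\nu$, with $x_0 \in \Delta$ and $\beta \in GL_\nu(\mathcal{O}(\Delta))$, all of whose germs are again solutions. I would then define the map $\Phi$ sending a solution defined in a neighbourhood of $\{x_0\}\times\mathbb{C}^\nu$ to the germ of $\beta$ at $x_0$, an element of $GL_\nu(\mathbb{C}\{x-x_0\})$. This map is independent of the chosen cylinder and is clearly injective, since $\beta$ determines $\tilde{g}$.

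Next I would prove that the image of $\Phi$ is a subgroup of $GL_\nu(\mathbb{C}\{x-x_0\})$, using that $sol(\wtilde{\mathcal{G}al(A(x))})$ is a subgroupoid of $Aut(M)$, as recalled after Definition \ref{defn:Dgroupoid} via the analytic continuation theorem. The identity matrix lies in the image because all germs of $\mathrm{id}_M$ are solutions (axiom $(i')$). If $\beta$ lies in the image, the inverse of $\tilde{g}$ in $Aut(M)$ is $(x,X) \mapsto (x, \beta(x)^{-1}X)$, again a solution by the stability of the solution groupoid under inversion, so $\beta^{-1}$ lies in the image. Finally, given two solutions corresponding to $\beta_1, \beta_2$, a direct computation gives $\tilde{g}_1 \circ \tilde{g}_2\colon (x,X) \mapsto (x, \beta_1(x)\beta_2(x)X)$; since the solution groupoid is stable under composition, this is a solution, so $\beta_1\beta_2$ lies in the image. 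As composition, inversion and the identity map correspond exactly to the product, the inverse and the unit in $GL_\nu(\mathbb{C}\{x-x_0\})$, the map $\Phi$ identifies the set of solutions near the transversal with a subgroup of $GL_\nu(\mathbb{C}\{x-x_0\})$.

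The delicate point, which I expect to be the main obstacle and would treat carefully, is the composability of the two germs inside the groupoid: to form $\tilde{g}_1 \circ \tilde{g}_2$ one needs the target of $\tilde{g}_2$ to coincide with the source of $\tilde{g}_1$. Here Proposition \ref{DefSolTransversale} is essential, since it guarantees that the germs of $\tilde{g}_1$ at \emph{every} point of its cylinder are solutions; in particular, choosing a base point $(x_0, X_0)$ on the transversal, the germ of $\tilde{g}_1$ at the target point $(x_0, \beta_2(x_0)X_0)$ is a solution, so the composite germ at $(x_0, X_0)$ is a solution which again fixes the transversal. This is also the place where one must recall that the groupoid axiom $(ii')$ of Definition \ref{defn:Dgroupoid} holds only above $(U\setminus Z)^2$ for a subvariety $Z$ of codimension $\geq 1$; the clean way around this is precisely to argue at the level of the solution set, which is a genuine groupoid by analytic continuation, rather than with the infinitesimal axiom directly, so that composition and inversion of solutions remain solutions without any restriction. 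Once composability is settled, the matching with the operations of $GL_\nu(\mathbb{C}\{x-x_0\})$ is immediate and completes the identification.
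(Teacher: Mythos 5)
Your proof is correct and follows essentially the same route as the paper: reduce to the form $(x,X)\mapsto(x,\beta(x)X)$ via Proposition \ref{prop:DgroupoideLin} and the intersection with $\mathcal{T}rv$, use Proposition \ref{DefSolTransversale} to work on whole cylinders, and read off the group structure from the groupoid structure of the solution set. The only (cosmetic) difference is in handling composability: the paper restricts to germs at the point $(x_0,0)$, which every such solution fixes because $\beta(x_0)\cdot 0=0$, so the isotropy of the groupoid there is automatically a group, whereas you work at an arbitrary base point on the transversal and invoke Proposition \ref{DefSolTransversale} to match sources and targets --- both are valid.
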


In the particular case of a constant linear $q$-difference system, that is with $A(x)=A \in GL_\nu(\mathbb{C})$, the
solutions of the Galois $D$-groupoid $\mathcal{G}al(A)$ are in fact global diffeomorphisms of $M$, and the set of those that fix the transversals of $M$ can be identified with an algebraic subgroup of $GL_\nu(\mathbb{C})$. This can be shown using a better bound than $\mathcal{L}in$ for the Galois $D$-groupoid of a constant linear $q$-difference system (\textit{cf} Proposition 3.4.2 of \cite{GranierThese}), or computing the $D$-groupoid $\mathcal{G}al(A)$ directly (\textit{cf} Theorem 2.1 of \cite{GranierFourier} or Theorem 4.2.7 of \cite{GranierThese}). Moreover, the explicit computation allows to observe that this subgroup corresponds to the usual $q$-difference Galois group as described in
\cite{SauloyENS} of the constant linear $q$-difference system $X(qx)=AX(x)$
(\cf Theorem 4.4.2 of \cite{GranierThese} or Theorem 2.4 of \cite{GranierFourier}).


\chapter{Comparison of the  parametrized intrinsic Galois group with the Galois \texorpdfstring{$D$}{D}-groupoid}

A. Granier has defined a $D$-groupoid for non-linear $q$-difference equations, in analogy with Malgrange
$D$-groupoid for non-linear differential equations (see the previous chapter). Roughly, this $D$-groupoid
corresponds to the largest sheaf of analytic differential equations that kill the dynamics of the non-linear
$q$-difference equation.
\par
In this section we prove that the
Malgrange-Granier $D$-groupoid, in the special case of  a linear $q$-difference equation, essentially ``coincides''
with the parametrized intrinsic Galois group of the equation. This result, which is Corollary \ref{cor:Malgrangoide},
is not a priori straightforward because
one has to compare a $D$-groupoid defined as a sheaf of differential ideal over an analytic
variety and a $\partial$-group scheme \textit{ \`{a} la Kolchin}.
This answers a question of Malgrange  (\cite[page 2]{Malgrangepseudogroupes}).
\par
Our proof is
divided in three main steps. The first one relies on Theorem \ref{thm:diffgenGalois}
and allows us to compare the parametrized intrinsic Galois group with the smallest $\partial$-scheme that contains the dynamic, namely its Kolchin closure. Then, we  sheafify the
defining equations of the Kolchin closure in order to get an algebraic
$D$-groupoid, which is defined by the largest set of algebraic differential equations that kill the
dynamic. Finally thanks to  GAGA arguments,  we  show that the defining equations
of the Malgrange-Granier $D$-groupoid are global and algebraic and thus coincide with the ones of our algebraic
$D$-groupoid. In the differential case, the problem of the algebraicity of the $D$-groupoid has been tackled
in more recent works by B. Malgrange himself.

In the special case of a linear differential equation, Malgrange proves that his $D$-groupoid,
allows to recover the Picard-Vessiot group (see \cite{MalgGGF}).
The foliation associated to the solutions of the non-linear differential equation, which exists
due to the Cauchy theorem, plays a central role in his proof, and actually in the whole theory.
There is a true hindrance to prove a Cauchy theorem and define a foliation over $\C$ attached to a $q$-difference system.
First of all, the solutions of a $q$-difference equation must be defined over a $q$-invariant domain and
they usually have an essential singularity at $0$ and at $\infty$. This fact prevents the existence of a
local solution on a compact domain and therefore a transposition of the Cauchy theorem. 
To overcome the lack of local solutions, we use Theorem \ref{thm:diffgenGalois} as a crucial ingredient of our proof.
However, some steps of our proof are similar to Malgrange theorem (\cf \cite{MalgGGF}) and Granier's
proof in the case of $q$-difference system with constant coefficients (see \cite[\S 2.1]{GranierFourier}).
In \S\ref{sec:MalgrangeGranier} below, we show how in  Malgrange or Granier's former  comparison results,
a parametrized intrinsic Galois group is hidden and why the parametrized structure is
inherent to Malgrange's $D$-groupoid constructions.

\par
Our results shall give some hints to compare the algebraic definitions of Morikawa
of the Galois group of a non-linear
$q$-difference equation and the analytic definitions of A.Granier
(\cf \cite{morikawa}, \cite{morikawaumemura}, \cite{umemurapreprint}).

\section{The Kolchin closure of the Dynamics and the  Malgrange-Granier groupoid}
\label{sec:malgrangegranieralg}

Let $q \in \C^* $ be not a root of unity and let $A(x)\in \GL_\nu(\C(x))$.
We consider the linear $q$-difference system
\begin{equation}\label{eqn:qsyst}
Y(qx)=A(x)Y(x).
\end{equation}
We set:
$$
\begin{array}{l}
A_k(x):=A(q^{k-1}x)\dots A(qx)A(x)~\hbox{for all~}k\in\Z,\,k>0;\\
A_0(x)=Id_\nu\\
A_k(x):=A(q^{k}x)^{-1}A(q^{k+1}x)^{-1}\dots A(q^{-1}x)^{-1}~\hbox{for all~}k\in\Z,\,k<0,
\end{array}
$$
so that $Y(q^kx)=A_k(x)Y(x)$, for any $k\in\Z$.
Following Chapter \ref{chap:anDgroupoid}, we denote by $M$ the analytic complex variety $\P^1_\C \times \C^\nu$, by $\Galan{A(x)}$ the
Galois $D$-groupoid of the system (\ref{eqn:qsyst}), \ie, the $D$-envelop of the dynamics
\beq\label{eq:dyn}
Dyn(A(x))=\l\{(x,X)\longmapsto (q^k x, A_k(x)X)\,:\,k\in\Z\r\}
\eeq
in the space of jets $J^*(M,M)$.
We keep the notation of Chapter \ref{chap:anDgroupoid}, which is preliminary to the content of this section.

\begin{warning}
Following Malgrange and the convention in Chapter \ref{chap:anDgroupoid},
we say that a $D$-groupoid $\cH$ is contained
in a $D$-groupoid $\cG$ if the groupoid of solutions of $\cH$ is contained in the groupoid of
solutions of $\cG$.
We will write
$sol(\cH)\subset sol(\cG)$ or equivalently $\cI_{\cG}\subset \cI_\cH$, where
$\cI_\cG$ and $\cI_\cH$ are the (sheaves of) ideals of definition of $\cG$ and $\cH$, respectively.
\end{warning}

\begin{notation}
In this section we introduce many tools that we use to get the proof of our main result
Corollary \ref{cor:Malgrangoide}. For the reader convenience we make a list of them here, with the reference {to} their definitions:
\end{notation}

\begin{tabular}{llll}
$Dyn(A(x))$, & \eqref{eq:dyn};\\
$\Galan{A(x)}$, &\S\ref{sec:anagrouplin};
    &$\wtilde{\mathcal{G}al(A(x))}$, &Definition \ref{defn:tildegal};\\
$\Gal{A(x)}$, &Definition \ref{defn:kol}; &$\Galt{A(x)}$, &Definition \ref{defn:galt};\\
$\Kol{A(x)}$, &Definition \ref{defn:kol}; &$\Kolt{A(x)}$, &Definition \ref{defn:kolt};\\
$\cL in$,  &Proposition \ref{prop:DgroupoideLin}; &$\cT rv$, &Definition \ref{defn:trv}.
\end{tabular}


\section{The groupoid \texorpdfstring{$\Gal{A(x)}$}{Gal(A)}}

Let $\C(x)\l\{ T,\frac{1}{\det T}\r\}_{\partial}$, with
$T=(T_{i,j}:i,j=0,1,\dots,\nu)$, be the algebra of differential rational functions
over $\GL_{\nu+1}(\C(x))$.
We consider the following morphism of $\partial$-$\C[x]$-algebras
$$
\begin{array}{rccc}
\tau:&\C[x]\l\{ T,\frac{1}{\det T}\r\}_{\partial} & \longrightarrow & H^0(M \times_\C M, \cO_{ J^*(M,M)}) \\~\\
&\begin{pmatrix}
  T_{0,0} & T_{0,1}& \hdots & T_{0,\nu} \\
  T_{1,0} & \\
  \vdots & & (T_{i,j})_{i,j} & \\
  T_{\nu,0} & \end{pmatrix}
& \longmapsto &
\begin{pmatrix}
  \frac{\partial \ol{x}}{\partial x} &\frac{\partial \ol{x}}{\partial X_1} & \hdots & \frac{\partial \ol{x}}{\partial X_\nu} \\
  \frac{\partial \ol{X_1}}{\partial x}& \\
  \vdots & & \l(\frac{\partial \ol{X_i}}{\partial X_j}\r)_{i,j} & \\
  \frac{\partial \ol{X_\nu}}{\partial x} &
  \end{pmatrix}
\end{array}
$$
from $\C[x]\l\{ T,\frac{1}{\det T}\r\}_{\partial}$ to the global sections $H^0(M \times_\C M, \cO_{ J^*(M,M)})$ of $\cO_{ J^*(M,M)}$,
that can be thought as the algebra of global partial differential equations over $M\times M$.
The image by $\tau$ of the differential ideal
$$
\cI=\l(T_{0,1},\dots,T_{0,\nu},T_{1,0},\dots,T_{\nu,0},\partial(T_{0,0})\r),
$$
that defines the  $\partial$-group scheme
$$
\l\{diag(\alpha, \beta(x)):=\begin{pmatrix} \alpha & 0 \\0 & \beta(x)\end{pmatrix}\,:
\hbox{~where $\alpha \in \C^*$ and $\beta(x) \in \GL_\nu (\C(x))$} \r\},
$$
is contained in the ideal $\cI_{\cL in}$ defining the $D$-groupoid $\cL in $
(\cf Proposition \ref{prop:DgroupoideLin}).

\begin{defn}\label{defn:kol}
We call $\Kol{A(x)}$ the smallest $\partial$-$\C(x)$-scheme of
$\GL_{\nu+1}(\C(x))$, defined over $\C(x)$, which contains
$$
\l\{
diag(q^k, A_k(x)):=\begin{pmatrix} q^k & 0 \\0 & A_k(x)\end{pmatrix}\,:\,k\in\Z\r\},
$$
and has the following property:
if we call $I_{\Kol{A(x)}}$ the differential ideal defining $\Kol{A(x)}$ and $I_{\Kol{A(x)}}^\p=I_{\Kol{A(x)}}\cap\C[x]\l\{ T,\frac{1}{\det T}\r\}_{\partial}$, then the
(sheaf of) differential ideal $\langle\cI_{\cL in}, \tau(I_{\Kol{A(x)}}^\p)\rangle$
generates a $D$-groupoid, that we will call $\Gal{A(x)}$, in the space of jets $J^*(M,M)$.
\end{defn}

\begin{rmk}\label{rmk:malgrange}
The definition above requires some explanations:
\begin{itemize}
\item
For the reader convenience, we recall here the basic definitions of the theory of affine  differential schemes, that can be found
 in \cite{Kovdiffscheme}. If we fix  a $\partial$-field $k$ of characteristic zero,
 we define a $\partial$-$k$-scheme as follows:
An affine $\partial$-$k$-scheme (or {$\partial$-scheme over $k$}) is a (covariant) functor from the category
of $\partial$-$k$-algebras to the category of sets which is representable.
It means that a functor $X$ from the category of $\partial$-$k$-algebras to the category of sets is a $\partial$-$k$-scheme
if and only if there exists a $\partial$-$k$-algebra $k\{X\}$ and an isomorphism of functors
$ X\simeq \Alg_k^\partial(k\{X\},-), $
where $\Alg_k^\partial$ stands for morphism of $\partial$-$k$-algebras. By the Yoneda lemma, the $\partial$-$k$-algebra $k\{X\}$ is uniquely determined up to unique $\partial$-$k$-isomorphisms. We call it the { ring of $\partial$-coordinates of $X$}.
By a {closed $\partial$-$k$-subscheme $Y\subset X$}
we mean a subfunctor $Y$ of $X$ which is represented by $k\{X\}/\I(Y)$ for some $\partial$-ideal $\I(Y)$ of $k\{X\}$.
The ideal $\I(Y)$ of $k\{X\}$ is uniquely determined by $Y$ and vice versa. We call it the {vanishing ideal of $Y$} in $X$.
A morphism of $\partial$-$k$-schemes is a morphism of functors. If $\phi\colon X\to Y$ is a morphism
of $\partial$-$k$-schemes, we denote the dual morphism of $\partial$-$k$-algebras with $\phi^*\colon k\{Y\}\to k\{X\}$.
\par
Reduced  $\partial$-schemes correspond to differential varieties in the sense of Kolchin  (see for instance
\cite{diffalgkolch}), for whom it suffices to focus on the solution set of
a system of differential equations with value in a sufficiently big field, i.e., a $\partial$-closed field.
\item
The phrase ``smallest  $\partial$-$\C(x)$-subscheme of $\GL_{\nu+1}(\C(x))$'' must be understood in the following way.
The ideal of definition of $\Kol{A(x)}$ is the largest differential ideal of $\C(x)\l\{ T,\frac{1}{\det T}\r\}_{\partial}$
which admits the
matrices $diag(q^k, A_k(x))$ as solutions for any $k\in\Z$ and verifies the second requirement of the definition.
Then $I_{\Kol{A(x)}}$ is radical and the Ritt-Raudenbush theorem (\cf Theorem \ref{thm:rittraudenbush} above)
implies that $I_{\Kol{A(x)}}$ is finitely $\partial$-generated.
Of course, the $\C(x)$-rational points of $\Kol{A(x)}$ may give very poor information on its structure, so we would rather speak of solutions in a differential
closure of $\C(x)$.
\item
The structure of $D$-groupoid has the following consequence on the points of $\Kol{A(x)}$:
if $diag(\a, \be(x))$ and $diag(\ga, \de(x))$ are two matrices with entries in a differential extension
of $\C(x)$ that belong to $\Kol{A(x)}$
then the matrix $diag(\a\ga,\be(\ga x) \de(x))$ belongs to $\Kol{A(x)}$.
In other words, the set of local diffeomorphisms $(x,X)\mapsto(\a x,\be(x)X)$ of $M\times M$ such that
$diag(\a, \be(x))$ belongs to $\Kol{A(x)}$ forms a set theoretic groupoid.
We could have supposed only that $\Kol{A(x)}$ is a  $\partial$-$\C(x)$-scheme and the solutions of $\Kol{A(x)}$
form a groupoid in the sense above, but this wouldn't have been enough. In fact, it is not known if a sheaf of differential ideals of $J^*(M,M)$ whose solutions
forms a groupoid is actually a $D$-groupoid (\cf Definition \ref{defn:Dgroupoid}, and in particular conditions (ii') and (iii')).
B. Malgrange told us that he can only prove this statement for a Lie algebra.
\end{itemize}
\end{rmk}

The  $\partial$-$\C(x)$-scheme $\Kol{A(x)}$ is going to be a bridge between the parametrized intrinsic Galois group and the Galois $D$-groupoid $\Galan{A(x)}$
defined in the previous chapter, \emph{via} the following theorem.

\begin{defn}\label{defn:kolt}
Let $\cM_{\C(x)}^{(A)}:=(\C(x)^\nu, \Sgq: X \mapsto A^{-1}\sgq(X))$ be the $q$-difference module over $\C(x)$
associated to the system $Y(qx)=A(x)Y(x)$, where $\sgq(X)$ is defined componentwise. We call
$\Kolt{A(x)}$ the  $\partial$-$\C(x)$-group scheme
defined by the differential ideal
$\langle I_{\Kol{A(x)}},T_{0,0}-1\rangle$ in $\C(x)\l\{ T,\frac{1}{\det T}\r\}_{\partial}$.
\end{defn}

Notice that, as for the Zariski closure, the Kolchin closure does not commute with the intersection, therefore
$\Kolt{A(x)}$ is not the Kolchin closure of $\{A_k(x)\}_k$.
Then we have:

\begin{thm}\label{thm:clotkol}
$Gal^{\partial}(\cM_{\C(x)}^{(A)})\cong\Kolt{A(x)}$.
\end{thm}

\begin{rmk}
One can define in exactly the same way a $\C(x)$-subscheme $\Zar A$
of $\GL_{\nu+1}(\C(x))$ containing the dynamics of the system and such that
$$
\l\{(x,X)\mapsto(\a x,\be(x)X):diag(\a, \be(x))\in\Zar A\r\}
$$
is a subgroupoid
of the groupoid of diffeomorphisms of $M\times M$.
Then one proves in the same way that $\Zart{A}$ coincide with the intrinsic Galois group,
introduced in Chapter \ref{chap:genericgaloisgroup}.
\end{rmk}

\begin{proof}[Proof of Theorem \ref{thm:clotkol}]
Let $\cN=constr^{\partial}(\cM)$ be a construction of differential algebra of $\cM$.
We can consider:
\begin{itemize}
\item
The basis denoted by $constr^{\partial}(\ul{e})$ of $\cN$ built from
the canonical basis $\ul{e}$ of $\C(x)^\nu$,
applying the same constructions of linear differential algebra.

\item
For any $\beta \in \GL_\nu(\C(x))$, the matrix
$constr^{\partial}(\beta)$ acting on $\cN$ with respect to the basis $constr^{\partial}(\ul{e})$,
obtained from $\beta$ by functoriality.
Its coefficients lies in $\C(x)[\beta, \partial(\beta),...]$

\item
Any $\psi=(\alpha, \beta) \in \C^* \times \GL_\nu(\C(x))$ acts semilinearly
on $\cN$ in the following way:
$\psi\ul e=(constr^{\partial}(\beta))^{-1}\ul e$ and $\phi(f(x)n)=f(\a x)n$, for any $f(x)\in \C(x)$ and $n\in\cN$.
In particular,
$(q^k, A_k(x))\in \C^* \times \GL_\nu(\C(x)) $ acts as $\Sgq^k$ on $\cN$.
\end{itemize}
A $q$-difference submodule $\cE$ of $\cN$ correspond to an invertible matrix
$F \in \GL_\nu(\C(x))$ such that
\begin{equation}\label{eqn:sev}
F(q^kx)^{-1} constr^{\partial}(A_k) F(x)
= \begin{pmatrix}* & *\\0 & * \end{pmatrix},
\,\hbox{for any $k\in\Z$}.
\end{equation}
Now, $(1, \beta)\in \C^*\times \GL_\nu(\C(x))$ stabilizes $\cE$ if and only if
\begin{equation}\label{eqn:stab}
F(x)^{-1} constr^{\partial}(\beta) F(x) =
\begin{pmatrix}* & *\\0 & * \end{pmatrix}.
\end{equation}
Equation (\ref{eqn:sev}) corresponds to a differential polynomial $L(T_{0,0}, (T_{i,j})_{i,j \geq 1})$ belonging to $\C(x)\l\{ T,\frac{1}{\det T}\r\}_{\partial}$
and having the property that $L(q^k, (A_k))=0$, for all $k \in \Z$. On the other hand
\eqref{eqn:stab} corresponds to $L(1, (T_{i,j})_{i,j \geq 1}))$.
It means that the solutions of the differential ideal $\langle I_{\Kol{A(x)}} ,T_{0,0}-1\rangle\subset \C(x)\l\{ T,\frac{1}{\det T}\r\}_{\partial}$
stabilize all the $q$-difference submodules of all the constructions of differential algebra, and hence that
$$
\Kolt{A(x)} \subset Gal^{\partial}(\cM_{\C(x)}).
$$
Let us prove the inverse inclusion.
In the notation of Theorem \ref{thm:complexmodulesgendiffGalois}, there exists a finitely generated extension
$K$ of $\Q$ and a $\sgq$-stable subalgebra $\cA$ of $K(x)$ of the forms considered in \S\ref{subsec:car0} such that:
\begin{enumerate}
\item
$A(x)\in \GL_\nu(\cA)$, so that it defines a $q$-difference module $\cM_{K(x)}^{(A)}$ over $K(x)$;
\item
$Gal^\partial(\cM_{K(x)}^{(A)})\otimes_{K(x)}\C(x)\cong Gal^\partial(\cM_{\C(x)}^{(A)})$;
\item
$\Kol{A(x)}$ is defined over $\cA$, \ie, there exists a differential ideal $I$ in the differential ring $\cA \{T, \frac{1}{\det(T)} \}_{\partial}$ such that
$I$ generates $I_{\Kol{A(x)}}$ in $\C(x)\l\{ T,\frac{1}{\det T}\r\}_{\partial}$.
\end{enumerate}
For any element $\wtilde L$ of the defining ideal of $\Kolt{A(x)}$ over $\cA$,
there exists
$$
L(T_{0,0};T_{i,j},i,j=1,\dots,\nu)\in I \subset \cA\l\{T, \frac{1}{\det(T)}\r\}_{\partial},
$$
such that $L\in\cI_{\Kol{A(x)}}$ and $\wtilde L =L(1;T_{i,j},i,j=1,\dots,\nu)$.
If $q$ is an algebraic number, other than a root of unity, or if $q$ is transcendental,
then, for almost all places $v\in\cC$, we have
$$
\wtilde L(A_{\kappa_v})\equiv L(1, A_{\kappa_v})\equiv L(q^{\kappa_v},A_{\kappa_v})\equiv 0 \; \mbox{modulo} \; \phi_v.
$$
This shows that $\Kolt{A(x)}$ is a  $\partial$-$\C(x)$-subgroup scheme of
$\GL_\nu(\C(x))$ which contains a non-empty cofinite set of
$v$-curvatures, in the sense of Theorem
\ref{thm:complexmodulesgendiffGalois}.
Therefore, $\Kolt{A(x)}$
contains the parametrized intrinsic Galois group of
$\cM_{\C(x)}^{(A)}$.
\end{proof}

\begin{defn}\label{defn:galt}
We call $\Galt{A(x)}$ the intersection of $\Gal{A(x)}$ and $\cT rv$.
\end{defn}

It follows from the definition that the $D$-groupoid $\Galt{A(x)}$ is generated by its global equations,
\ie, by $\cL in$ and the image
of the equations of $\Kolt{A(x)}$ by the morphism $\tau$.
Therefore we deduce from Theorem \ref{thm:clotkol} the following statement:

\begin{cor}\label{cor:galalg}
As a $D$-groupoid, $\Galt{A(x)}$ is generated by its global sections, namely the
$D$-groupoid $\cL in$ and the image of the equations of $Gal^\partial(\cM^{(A)}_{\C(x)})$
\emph{via} the morphism $\tau$.
\end{cor}

\begin{rmk}\label{rmk:galalg}
The corollary above says that the sheaf of differential ideals defining the $D$-groupoid $\Galt{A(x)}$ is generated by its global sections, $\cL in$ and $\mathfrak{q}$, where $ \mathfrak{q}$ is the defining ideal of the intrinsic parametrized  Galois group. This statement is much stronger than saying that, in the neighborhood of $x_0 \in \mathbf{P}^1(\C)$, the germs of diffeomorphism, solutions of $\Galt{A(x)}$, can be written $(x,X)\mapsto(x,\be(x)X)$  with $\be(x) \in \GL_{\nu}(\C\{x-x_0\}) $ solution of $\mathfrak{q}$.
\end{rmk}

The $D$-groupoid  $\Galt{A(x)}$ is a
differential analog of the $D$-groupoid generated by a group scheme
introduced in \cite[Proposition 5.3.2]{MalgGGF} by B. Malgrange.

\section{The Galois \texorpdfstring{$D$}{D}-groupoid \texorpdfstring{$\Galan{A(x)}$}{Galan(A)} vs
the intrinsic parametrized Galois group}

Since $Dyn(A(x))$ is contained in the solutions of $\Gal{A(x)}$, we have
$$
sol(\Galan{A(x)})\subset sol(\Gal{A(x)})
$$
and
$$
sol(\wtilde{\Galan{A(x)}})\subset sol(\Galt{A(x)}).
$$
{As} already mentioned, the solution are to be found in some differential closure of
$(\C(x),\partial)$.

\begin{thm}\label{thm:Malgrangoide}
The solutions of the $D$-groupoid $\wtilde{\Galan{A(x)}}$ (resp. $\Galan{A(x)}$)
coincide with the solutions of $\Galt{A(x)}$ (resp. $\Gal{A(x)}$).
\end{thm}

Combining the theorem above with Corollary \ref{cor:galalg}, we immediately obtain:

\begin{cor}\label{cor:Malgrangoide}
The solutions of the $D$-groupoid $\wtilde{\Galan{A(x)}}$ are germs of diffeomorphisms
of the form $(x,X)\longmapsto (x, \be(x)X)$, such that $\be(x)$ is a solution of the differential equations
defining $Gal^\partial(\cM^{(A)}_{\C(x)})$, and \emph{vice versa}.
\end{cor}

\begin{rmk}
The corollary above says that the germs of diffeomorphism, solutions of $\wtilde{\Galan{A(x)}}$, in a neighborhood of a transversal $\{x_0\}\times\C^\nu$
(\cf Proposition \ref{prop:galt} {above}) are of the form $(x,X) \mapsto (x, \beta(x) X)$
with  $\be(x)\in \GL_\nu(\C\{x -x_0\})$ of the
differential equations defining the parametrized intrinsic Galois group.
{Notice that we do not say that the sheaf of differential ideals of  $\wtilde{\Galan{A(x)}}$ is  generated by $\cL in$ and the defining equations of the intrinsic parametrized Galois group, which would be a stronger statement.}
\end{rmk}

\begin{proof}[Proof of Theorem \ref{thm:Malgrangoide}]
Let $\cI $ be the differential ideal of $\Galan{A(x)}$ in $\cO_{J^*(M,M)}$ and let $\cI_r$ be the subideal of $\cI$ of  order $\leq r$.
We consider the morphism of analytic varieties given by
$$
\begin{array}{rccc}
\iota :& \P^1_\C \times \P^1_\C &\longrightarrow & M \times_\C M \\~\\
   & \l(x, \ol{x}\r) &\longmapsto & \l(x, 0,\ol{x},0\r)
\end{array}\,
$$
and the inverse image $\cJ_r:=\iota^{-1}\cI_r$ (resp. $\cJ:=\iota^{-1}\cI$) of the sheaf $\cI_r$ (resp. $\cI$) over $\P^1_\C \times \P^1_\C $.
We consider similarly to \cite[Lemma 5.3.3]{MalgGGF}, the evaluation $ev(\iota^{-1}\cI)$
at $X=\ol X=\frac{\partial^i\ol X}{\partial x^i}=0$ of the
equations of $\iota^{-1}\cI$ and we denote by $ev(\cI)$ the direct
image by $\iota$ of the sheaf $ev(\iota^{-1}\cI)$.

The following lemma is crucial in the proof of the Theorem \ref{thm:Malgrangoide}:

\begin{lemma}
\label{lemma:ev}
A germ of local diffeomorphism $(x, X) \mapsto (\alpha x, \beta(x) X)$ of $M$ is solution of $\cI$ if and only if it is solution of $ev(\cI)$.
\end{lemma}

\begin{proof}
First of all, we notice that $\cI$ is contained in $\cL in$. Moreover the solutions of
$\cI$, that are diffeomorphisms mapping a neighborhood of $(x_0,X_0)\in M$ to a neighborhood of $(\ol x_0,\ol X_0)$,
can be naturally continued to diffeomorphisms of a neighborhood of $x_0\times\C^\nu$ to a neighborhood of
$\ol x_0\times\C^\nu$.
Therefore it follows from the particular structure of the solutions of $\cL in$, that they are also solutions of $ev\l(\cI\r)$
(\cf Proposition \ref{prop:DgroupoideLin}).
\par
Conversely,
let the germ of diffeomorphism $\l(x, X\r) \mapsto \l(\alpha x, \beta\l(x\r) X\r)$ be a solution of $ev\l(\cI\r)$ and
$E \in \cI_r$.
It follows from Proposition \ref{LemmeReductionModLin}
that there exists $E_{1} \in \cI$ of order $r$,
only depending on the variables
$x$,$X$,$\frac{\partial \bar{x}}{\partial x}$,
$\frac{\partial \bar{X}}{\partial X}$,$\frac{\partial^{2} \bar{X}}{\partial x\partial X},\ldots$,
$\frac{\partial^{r}\bar{X}}{\partial x^{r-1}\partial X}$,
such that $\l(x, X\r) \mapsto \l(\alpha x, \beta\l(x\r) X\r)$ is solution of $E$ if and only if it is solution of $E_1$.
So we will focus on equations on the form $E_1$ and, to simplify notation, we will write $E$ for $E_1$.
\par
By assumption $\l(x, X\r) \mapsto \l(\alpha x, \beta\l(x\r) X\r)$ is solution of
$$
E\l( x,0,\frac{\partial \bar{x}}{\partial x},\frac{\partial \bar{X}}{\partial X},
\frac{\partial^{2} \bar{X}}{\partial x\partial X},\ldots \frac{\partial^{r}\bar{X}}{\partial x^{r-1}\partial X}\r)
$$
and we have to show that $\l(x, X\r) \mapsto \l(\alpha x, \beta\l(x\r) X\r)$ is a solution of $E$.
We consider the Taylor expansion of $E$:
$$
E\l( x,X,\frac{\partial \bar{x}}{\partial x},\frac{\partial \bar{X}}{\partial X},
\frac{\partial^{2} \bar{X}}{\partial x\partial X},\ldots \frac{\partial^{r}\bar{X}}{\partial x^{r-1}\partial X}\r)
=
\sum_{\alpha} E_{\alpha}\l(x, X\r) \partial^{\alpha},
$$
where $\partial^{\alpha}$ is a monomial in the coordinates $\frac{\partial \bar{x}}{\partial
x},\frac{\partial \bar{X}}{\partial X},\frac{\partial^{2} \bar{X}}{\partial x\partial X},\ldots \frac{\partial^{r}
\bar{X}}{\partial x^{r-1}\partial X}$.
Developing the $E_\a\l(x,X\r)$ with respect to $X=(X_1,\dots,X_\nu)$ we obtain:
$$
E = \sum \l(\sum_\alpha
\l(\frac{\partial^{\ul k} E_\alpha}{\partial X^{\ul k}}\r)\l(x,0\r) \partial ^\alpha \r) X^{\ul k},
$$
with $\ul k\in(\Z_{\geq 0})^\nu$.
If we show that for any $\ul k$ the germ $\l(x, X\r) \mapsto \l(\alpha x, \beta\l(x\r) X\r)$ verifies the equation
$$
B_{\ul k} := \sum_\alpha \l(\frac{\partial^{\ul k} E_\alpha}{\partial X^{\ul k}}\r) \l(x,0\r) \partial ^\alpha
$$
we can conclude.
For $\ul k=(0,\dots,0)$, there is nothing to prove since $B_{\ul 0}=ev\l(E\r)$.
\par
Let $D_{X_i}$ be the derivation of $\cI$ corresponding to $\frac{\partial}{\partial X_i}$,
The differential equation
$$
D_{X_i}\l(E\r) = \sum_\alpha \l(\frac{\partial E_\alpha}{\partial{X_i}}\r) \l(x,X\r) \partial ^\alpha + \sum_\alpha E_\alpha\l(x,X\r) D_{X_i}\l(\partial^\alpha\r)
$$
is still in $\cI$, since $\cI$ is a differential ideal.
Therefore by assumption $\l(x, X\r) \mapsto \l(\alpha x, \beta\l(x\r) X\r)$ is a solution of
$$
ev\l(D_{X_i}E\r)=
\sum_\alpha \l(\frac{\partial E_\alpha}{\partial{X_i}}\r) \l(x,0\r) \partial ^\alpha + \sum_\alpha E_\alpha\l(x,0\r) D_{X_i}\l(\partial^\alpha\r)
.
$$
Since $D_{X_i}\l(\partial^\alpha\r)\in\cL in$
and $\l(x, X\r) \mapsto \l(\alpha x, \beta\l(x\r) X\r)$ is a solution of $\cL in$,
we conclude that $\l(x, X\r) \mapsto \l(\alpha x, \beta\l(x\r) X\r)$ is a solution of
$$
\sum_\alpha \l(\frac{\partial E_\alpha}{\partial X}\r) \l(x,0\r) \partial ^\alpha
$$
and therefore of $B_1$.
Iterating the argument, one deduce that
$\l(x, X\r) \mapsto \l(\alpha x, \beta\l(x\r) X\r)$
is solution of $B_{\ul k}$ for any $\ul k\in(\Z_{\geq 0})^\nu$, which ends the proof of the lemma.
\end{proof}
We go back to the proof of Theorem \ref{thm:Malgrangoide}.
Lemma \ref{lemma:ev} proves that the solutions of $\Galan{A\l(x\r)}$ coincide with those
of the $D$-groupoid $\Gamma$ generated by $\cL in$ and $ev\l(\cI\r)$, defined on the open neighborhoods
of any $x_0\times\C^\nu\in M$. By intersection with the equation $\cT rv$, the same holds
for the transversal groupoids $\wtilde{\Galan{A\l(x\r)}}$ and $\wtilde{\Gamma}$.

Since $ \P^1_\C \times \P^1_\C $ and $M \times_\C M$ are locally compact and
$\cI_r$ is a coherent sheaf over $M \times_\C M$, the sheaf $\cJ_r$ is an analytic coherent sheaf over $\P^1_\C \times \P^1_\C $ and so is its quotient $ev(\iota^{-1}(\cI_{r}))$. By
\cite[Theorem 3]{Gagaser}, there exists
 an algebraic coherent sheaf
$\J_r$ over the projective variety $\P^1_\C \times \P^1_\C$ such that the analyzation of $\J_r$ coincides with $ev(\iota^{-1}(\cI_{r}))$. This implies that $ev\l(\cI\r)$ is generated by
algebraic differential equations
which by definition have the dynamics for solutions.

We thus have that the $sol(\Ga)=sol(\Galan{A(x)})\subset sol(\Gal{A(x)})$.
Since both $\Ga$ and $\Gal{A(x)}$ are algebraic, the minimality of the variety
$\Kol{A(x)}$ implies that
$sol(\Gal{A(x)})\subset sol(\Gamma)$.
We conclude that the solutions of $\Galan{A(x)}$ coincide with those $\Gal{A(x)}$.
The same hold for $\wtilde{\Galan{A(x)}}$, $\wtilde{\Gamma}$ and $\Galt{A(x)}$).
This concludes the proof.
 \end{proof}

\section{Comparison with known results}
\label{sec:MalgrangeGranier}

In \cite{MalgGGF}, B. Malgrange proves that the Galois-$D$-groupoid of a linear differential equation
allows to recover, in the special case of a linear differential equation, the Picard-Vessiot  group over $\C$. This is compatible with the result above, since:
\begin{itemize}
\item
due to the fact that local solutions of a linear differential equation form a $\C$-vector space
(rather than a vector space on the field of elliptic functions!), \cite[Proposition 4.1]{Katzbull} shows that
the intrinsic Galois group and the Picard-Vessiot group in the differential setting become isomorphic above a certain extension of the local ring.
For more details on the relation between the intrinsic Galois group and the usual Galois group see
\cite[Corollary 3.3]{pillay}.

\item
{In the differential setting 
the parametrized intrinsic Galois group with respect to $\frac{d}{dx}$  is conjugate to the constant points of the differential group scheme attached to the intrinsic Galois group.} Therefore, the set of germs of solutions of the defining equations of the
parametrized intrinsic Galois group coincide with the $\C$-points of the intrinsic Galois group.

\end{itemize}
Therefore B. Malgrange actually finds a parametrized intrinsic Galois group, which is hidden in his construction.
The steps of the proof above are the same as in his proof, apart that, to compensate
the lack of good local solutions, we are obliged to use Theorem \ref{thm:diffgenGalois}.
Anyway, the application of Theorem \ref{thm:diffgenGalois} appears to be very natural, if one considers
how close the definition of the dynamics of a linear $q$-difference system and the definition of the curvatures are.

\medskip
In \cite{GranierFourier}, A. Granier shows that in the case of a $q$-difference system with constant coefficients
the groupoid that fixes the transversals in $\Galan{A(x)}$ is the Picard-Vessiot  group, \ie, a $\C$-group scheme.
Once again, this is not in contradiction with our results.
{Indeed, let $\cM$ be a $q$-difference module over $\C(x)$ associated with a constant $q$-difference system. Under this assumption, the curvatures of the system are defined over $\C$.
Thus, the intrinsic Galois group is defined over $\C$ and coincides with the Picard-Vessiot group. Moreover since the prolongation of $\cM$ splits, the parametrized intrinsic Galois group is conjugate to the group of constant points of the differential group scheme attached to the intrinsic Galois group. This allows to conclude that the set of germs of solutions of the defining equations of the parametrized intrinsic Galois group coincides with the Picard-Vessiot group for a  constant $q$-difference system.}

\medskip
Because of these results, G. Casale and J. Roques have conjectured that ``for linear {($q$-)}difference systems, the action of Malgrange
groupoid on the fibers gives the classical Galois groups'' (\cf \cite{casaleroques}).
In \emph{loc. cit.}, they give two proofs of their main integrability result:
one of  them relies on their conjecture.
Here we have proved that the Galois-$D$-groupoid allows to recover exactly the parametrized intrinsic Galois group.
By taking the Zariski closure one can also recover the  intrinsic Galois group.
{One can prove that} we can also recover the Picard-Vessiot  group
(\cf \cite{vdPutSingerDifference}, \cite{SauloyENS}),
performing a Zariski closure and a {suitable} field extension, and the parametrized Galois group (\cf \cite{HardouinSinger}),
performing a field extension. {See Remarks \ref{rmk:comparison} and \ref{rmk:comparison2}.}


\backmatter



\newcommand{\noopsort}[1]{}
\providecommand{\bysame}{\leavevmode\hbox to3em{\hrulefill}\thinspace}
\providecommand{\MR}{\relax\ifhmode\unskip\space\fi MR }
\providecommand{\MRhref}[2]{%
  \href{http://www.ams.org/mathscinet-getitem?mr=#1}{#2}
}
\providecommand{\href}[2]{#2}

\end{document}